\theoremstyle{plain}
\newtheorem{theorem}{Theorem}
\newtheorem{corollary}[theorem]{Corollary}
\newtheorem{definition}[theorem]{Definition}
\newtheorem{lemma}[theorem]{Lemma}
\newtheorem*{problem*}{Problem}
\newtheorem{problem}{Problem}
\newtheorem{proposition}[theorem]{Proposition}
\theoremstyle{definition}
\newtheorem{example}[theorem]{Example}
\newtheorem{notation}[theorem]{Notation}
\newtheorem{remark}[theorem]{Remark}
\numberwithin{equation}{section}
\numberwithin{theorem}{section}
\newcommand{\area}{\operatorname{area}}
\newcommand{\vol}{\operatorname{vol}}
\newcommand{\inte}{\mathrm{int}}
\newcommand{\pM}{\partial M}
\renewcommand{\div}{\operatorname{div}}
\newcommand{\Hess}{\operatorname{Hess}}
\newcommand{\DeltaN}{\Delta\!^{N}}
\newcommand{\DeltaM}{\Delta\!^{M}}
\newcommand{\rr}{\mathbb{R}}
\renewcommand{\ss}{\mathbb{S}}
\newcommand{\nn}{\mathbb{N}}
\newcommand{\bb}{\mathbb{B}}
\renewcommand{\gg}{\mathbb{G}}
\newcommand{\hh}{\mathbb{H}}
\newcommand{\mm}{\mathbb{M}}
\newcommand{\sect}{\operatorname{Sect}}
\newcommand{\ric}{\operatorname{Ric}}
\newcommand{\tr}{\operatorname{trace}}
\newcommand{\dv}{\mathrm{dv}}
\newcommand{\da}{\mathrm{da}}
\newcommand{\dr}{\mathrm{d}r}
\newcommand{\ds}{\mathrm{d}s}
\newcommand{\dz}{\mathrm{d}z}
\def\XXint#1#2#3{{\setbox0=\hbox{$#1{#2#3}{\int}$}
     \vcenter{\hbox{$#2#3$}}\kern-.5\wd0}}
\newcommand{\dist}{\mathrm{dist}}
\renewcommand{\a}{\alpha}
\renewcommand{\b}{\beta}
\newcommand{\g}{\gamma}
\renewcommand{\l}{\lambda}
\newcommand{\s}{\sigma}
\newcommand{\vp}{\varphi}
\renewcommand{\O}{\Omega}
\newcommand{\CA}{\mathcal{A}}
\newcommand{\CE}{\mathcal{E}}
\newcommand{\CN}{\mathcal{N}}
\newcommand{\CL}{\mathcal{L}}
\newcommand{\D}{\mathscr{D}}
\renewcommand{\S}{\mathscr{S}}
\begin{document}

\begin{abstract}
This paper deals with symmetry phenomena for  solutions of the Dirichlet problem involving semilinear PDEs on Riemannian domains.  We shall present a rather general framework where the symmetry problem can be formulated and provide some evidence that this framework is completely natural  by pointing out some results for stable solutions. The case of manifolds with density, and corresponding weighted Laplacians, is inserted in the picture from the very beginning.
\end{abstract}

%%% front matter %%%

\title[Symmetry of stable solutions of semilinear PDEs]{Symmetry of solutions of semilinear PDEs on Riemannian domains}
\author{Andrea Bisterzo}
\address{Universit\`a degli Studi di Milano-Bicocca\\ Dipartimento di Matematica e Applicazioni \\ Via Cozzi 55, 20126 Milano - ITALY}
\email{a.bisterzo@campus.unimib.it}
\author{Stefano Pigola}
\address{Universit\`a degli Studi di Milano-Bicocca\\ Dipartimento di Matematica e Applicazioni \\ Via Cozzi 55, 20126 Milano - ITALY}
\email{stefano.pigola@unimib.it}
\date{\today}
\maketitle
%\tableofcontents

%%%%%%%%%%%%%%%%%%%%%%%%%%%%%%%%%%%%%%%%%%%%%%%%%%%%%%%%%%%

\section{Introduction}
This paper deals with symmetry phenomena for  solutions of the Dirichlet problem involving semilinear PDEs on Riemannian domains.  We shall present a rather general framework where the symmetry problem can be formulated and provide some evidence that this framework is completely natural  by pointing out some results for stable solutions. The case of manifolds with density, and corresponding weighted Laplacians, is inserted in the picture from the very beginning. The investigations of the present paper all arise from the {\it elementary properties of stable solutions} in Euclidean domains as they are presented by L. Dupaigne in \cite[Section 1.3]{Du} and show how much geometry was (more or less implicitly) contained there.

\subsection{Basic notation}

Throughout this paper, $(M,g)$ will always denote a connected Riemannian manifold of dimension $\dim M = m$. The symbols $\sect$ and $\ric$ are reserved to its  sectional and  Ricci curvatures. We set $\dist(x,y)$ for the intrinsic distance of $M$. The corresponding open metric ball centered at $o \in M$ and of radius $R>0$ is $B^{M}_{R}(o) = \{ x \in M : \dist(x,o) < R\}$. When there is no danger of confusion, the overscript $M$ is omitted in the notation and we simply write $B_{R}(o)$. Moreover, in the special case where $M=\rr^{n}$ is equipped with its standard flat metric $g^{E}$ we set $\bb_{R} = B_{R}(0)$.

A  class of Riemannian manifolds of special interest is that of {\it model manifolds}. Let $\s:[0,R) \to \rr_{\geq 0}$, $0<R \leq +\infty$, be a smooth function that is positive in $(0,R)$ and satisfying
\begin{itemize}
 \item $\s^{(2k)}(0) = 0$ for all $k \in \nn$;
 \item $\s'(0) = 1$.
\end{itemize}
Then, in polar coordinates around $0$, we can define a smooth Riemannian metric on $(0,R)\times \ss^{m-1}$ by setting
\[
g = dr \otimes dr + \s^{2}(r) g^{\ss^{m-1}},
\]
where $g^{\ss^{m-1}}$ is the standard metric on the unit sphere $\ss^{m-1}\subset \rr^{m}$. The corresponding Riemannian manifold $\mm^{m}(\s)=(\mathbb{B}_R,g)$, obtained by identifying all the points of the form $(0,\theta)$ with $0$ and extending (smoothly) the metric in $0$, will be called an $m$-dimensional {\it model manifold with warping function $\s$}. Clearly, $\mm(\s)$ is complete if and only if $R=+\infty$ and, in any case, the $r$-coordinate represents the distance from the {\it pole} $o=0\in\rr^{m}$. Thus, $B^{\mm(\s)}_{T}(o) = \{ x \in \bb_{R}: r(x) < T\}$. For more details on the construction of warped product manifolds and model manifolds we suggest \cite{Pe}.

\begin{example}
 The standard spaceforms $\rr^{m}$, $\ss^{m} \setminus \{ pt.\}$ and $\hh^{m}$ are model manifolds with the choice, respectively, $\s(r) = r$, $\s(r) = \sin(r)$, $\s(r) = \sinh(r)$. 
\end{example}

Now, let the Riemannian manifold $(M,g)$ be endowed  with the absolutely continuous measure $\dv_{\Psi} = e^{-\Psi}\dv$ where $\dv$ is the Riemannian measure and $\Psi:M \to \rr$ is a selected smooth function. Usually, the triple
\[
M_{\Psi} = (M,g,\dv_{\Psi})
\]
is called a  {\it weighted manifold} or a {\it manifold with density} or a {\it smooth metric measure space}.

On the weighted manifold $M_{\Psi}$ we have a natural linear elliptic differential operator. It is the {\it weighted Laplacian}, also called {\it $\Psi$-Laplacian}, which is defined by the formula
\[
\Delta_{\Psi} u = e^{\Psi} \div(e^{-\Psi} \nabla u) = \Delta u - g(\nabla \Psi,\nabla u).
\]
Here,
\[
\Delta u = \tr \Hess(u) = \div (\nabla u)
\]
stands for the {\it Laplace-Beltrami} operator of $(M,g)$. We stress that we are using the sign convention according to which, in case $M = \rr$, $\Delta = + d^{2}/dx^{2}$. In other terms, $\Delta$ is a negative definite operator in the spectral sense. Note also that when $\Psi \equiv const$ then $\Delta_{\Psi} = \Delta$.

Very often, one sets
\[
\div_{\Psi} X = e^{\Psi} \div(e^{-\Psi} X)
\]
so that the $\Psi$-Laplacian takes the suggestive form
\[
\Delta_{\Psi} u = \div_{\Psi}(\nabla u).
\]
Clearly, we have the validity of the $\Psi$-divergence theorem on $M_{\Psi}$: given a compact domain $\O $ with smooth boundary and a vector field $X$, it holds
\[
\int_{\O } \div_{\Psi} X\ \dv_{\Psi} = \int_{\partial \O } g(X,\vec \nu)\ \da_{\Psi}
\]
where $\vec \nu$ is the exterior unit normal to $\partial \O $, $\da_{\Psi} = e^{-\Psi} \da$ and $\da$ is the $(m-1)$-dimensional Hausdorff measure of $\partial \O $. As a simple consequence, the operator $\Delta_{\Psi}$ is symmetric on $L^{2}(M,\dv_{\Psi})$.\smallskip

The geometric analysis on the weighted manifold $M_{\Psi}$ is influenced by the bounds of its family of Bakry-Emery Ricci tensors. In view of our purposes we limit ourselves to introduce the $\infty$-dimensional Ricci Tensor
\[
\ric_{\Psi} = \ric + \Hess(\Psi).
\]

\begin{example}
 The Gaussian space
\[
\gg^{m} = \left(\rr^{m},g^{\rr^{m}}, e^{-\frac{|x|^{2}}{2}}dx \right)
\]
is an example of great interest in metric and differential geometry, probability, harmonic and geometric analysis. Its weighted Laplacian $\Delta_{\Psi} u = \Delta u - \langle \nabla u , x\rangle$ is the {\it Ornstein-Uhlenbeck operator}. Obviously the Gaussian space is a weighted model manifold
\[
\gg^{m} = \mm^{m}(\s)_{\Psi}
\]
with warping function $\s(r) = r$ and symmetric weight $\Psi(x) = r^{2}(x)/2$. A direct computation shows  that $\ric_{\Psi} \equiv 1$.
\end{example}

\subsection{Symmetry under stability}

We are going to address the following  classical
\begin{problem}\label{problem}
Let $\O $ a (possibly non-compact) domain in the weighted Riemannian manifold $M_{\Psi}$ and assume that $\O$ has smooth boundary components $\partial \O  = (\partial \O)_{1}\cup \cdots \cup (\partial \O)_{n}$. Let us given regular solution of the semilinear boundary value problem
\begin{equation}\label{DP}
\begin{cases}
\Delta_{\Psi} u = f(u) & \text{in }\O  \\
u = \phi_{j} & \text{on }(\partial \O)_{j}
\end{cases}
\end{equation}
for some sufficiently regular nonlinearity $f(t)$. Assume that the domain, the  differential operator and the boundary data display a certain (and same) symmetry. To what extent the solution inherits this symmetry? 
\end{problem}

We stress that our solutions will be always assumed to be very regular (say, at least $C^2$). The case of weakly regular solutions introduces nontrivial difficulties and require further assumptions, as one can see from the very recent \cite{DF} by Dupaigne and Farina.

In the Euclidean space $M=\rr^{n}$, the celebrated theorem by B. Gidas, W.M. Ni and L. Nirenberg, \cite{GNN}, later extended to spherical and hyperbolic spaceforms in \cite{KP2}, states that if $\O  = \bb$ is the (unit) ball of $\rr^{n}$,  $\Delta_{\Psi} = \Delta$ is the Euclidean Laplacian and $\phi \equiv 0$, then any solution  $u > 0$ of \eqref{DP} is rotationally symmetric (and decreasing). The proof makes use of the {\it moving plane method} and, therefore, requires a lot of homogeneity of the underlying space in order to perform reflections in every direction.  It is well known that the positivity of the solution is vital as shown by the (non-symmetric) eigenfunctions relative to higher Dirichlet eigenvalues of the ball. Moreover, the ball itself cannot, in general, be replaced by a non-convex domain, like an annulus, as the seminal example by H. Brezis and L. Nirenberg shows, \cite[p. 453]{BN}.

However, as we are going to see in a quite general geometric setting and as it is proved by N.D. Alikakos and P.W. Bates, \cite{AB}, in the Euclidean space, both these assumptions become redundant as soon as it is assumed that the solution $u$ is ``stable''.\smallskip

In fact, in this paper we shall only focus the case of {\it stable solutions} of \eqref{DP}, where the nonlinearity $f(t)$ is at least $C^{1}$. Stability is a second order condition defined in terms of the first Dirichlet eigenvalue of the linearized (Schr\"odinger) operator and it is always satisfied if the solution is energy minimizer. More precisely, assume for simplicity that $\O $ is compact. Let $F(t)$ be a primitive of the $C^{1}$ function $f(t)$ and consider the energy functional
\[
\CE[v] = \int_{\O } \left( \frac{1}{2}|\nabla v |^{2} + F(v) \right) \dv_{\Psi}
\]
on the space
\[
\S = \{ v \in C^{2}(\overline\O ) : v|_{(\partial \O)_{j} } = \phi_{j} \}.
\]
For any $\vp \in C^{\infty}_{c}(\O )$ and $t \in \rr$ it holds $u_{t} = u + t \vp \in \S$. If $u$ is a classical solution of the problem, then (integrating by parts) $u$ is a weak solution of the PDE and, therefore
\[
\left.\frac{d}{dt}\right\vert_{t=0} \CE[u_{t}] =  \int_{\O } g(\nabla u, \nabla \vp) \, \dv_{\Psi}  + \int_{\O }f(u)  \vp \, \dv_{\Psi}= 0.
\]

\begin{definition}[Stable and strongly stable solutions]
Say that the solution $u$ is stable if
\[
0 \leq \left.\frac{d^{2}}{dt^{2}}\right\vert_{t=0} \CE[u_{t}] = \int_{\O }\left(|\nabla \vp|^{2} +f'(u) \vp^{2}\right)\dv_{\Psi}
\]
i.e. the stability operator $\CL= \Delta_{\Psi} - f'(u)$ has nonnegative Dirichlet spectrum:
\[
\l_{1}^{-\CL}(\O) := \inf_{\vp \in C^{\infty}_{c}(\O ), \, \vp \not\equiv 0} \frac{\int_{\O }(|\nabla \vp|^{2} +f'(u) \vp^{2})\dv_{\Psi}}{ \int_{\O } \vp^{2}\dv_{\Psi}} \geq 0.
\]
The solution $u$ is said to be \textit{strongly stable} if $\l_1^{-\CL}(\O)>0$. 
\end{definition}

\subsection{Organization of the paper}
Clearly, in order to carry out an investigation around Problem \ref{problem}, we need first to clarify what ``symmetric'' means for a Riemannian domain and, hence, for a solution of \eqref{DP} on this domain. We choose to define the symmetry of a domain in terms of the existence of a  foliation by special  hypersurfaces and the corresponding symmetry of functions as the condition that the function is constant on each leaf of the foliation. Equivalently, the function agrees with its averages on the (compact) leaves of the foliation. This is explained in Sections \ref{section-Symmetric} and \ref{section-symmetricfunctions}. \smallskip

Sometimes, and these are the lucky cases, symmetry properties of generic solutions boil down to uniqueness issues for the relevant class of PDEs. In Section  \ref{section-maximum-unique-symmetry} we review (slightly extended versions of) both the classical maximum principle for Schr\"odinger operators and the uniqueness property of stable solutions. As a consequence of the maximum principle and the fact that the average operator commutes with the differential operator, we observe how, in this general geometric framework, symmetry over compact domains occurs for affine $f(t)$.\smallskip

In Section \ref{section-geometicDupaigne} we point out that symmetry of stable solutions appears as soon as the domain supports enough Killing vector fields tangential to the leaves of its foliation. This translates the fact that the domain is {\it homogeneous} in the precise sense of {\it co-homogenity one actions of Lie subgroups of isometries}. This simple result encloses in a single view a lot of concrete cases that, at first glance, could appear of different nature, such as balls in model manifolds, annuli in warped products of a real interval with a homoegeneous manifold, tubes around Clifford tori in the $n$-sphere and many others. \smallskip

\noindent In Section \ref {SymmetryWithoutKillingFields}, in order to test how much the existence of infinitesimal symmetries influence the problem, we consider the case of a possibly non-compact warped product that, in general, supports no Killing fields at all. Using potential theoretic tools, we are still able to prove a quite general symmetry result for (strongly stable) solutions provided the nonlinearity is concave and somewhat compatible with the geometry. The general result applies e.g. to slabs (the region enclosed between two parallel hyperplanes) in the Gaussian space.\bigskip

\noindent{\bf Acknowledgments. }
The authors would like to thank Giona Veronelli for his suggestions related to the proof of Lemma \ref{Lem:L1ContinuityEnlarging} and Alberto Farina for explanations about the content of \cite{FMV} and for some interesting discussions concerning maximum principles.

%%%%%%%%%%%%%%%%%%%%%%%%%%%%%%%%%%%%%%%%%%%%%%%%%%%%%%%%%%%%%%%

\section{Symmetric domains}\label{section-Symmetric}

As we have already mentioned in the Introduction, the first aspect we need to clarify is what does ``symmetric'' mean in the setting of Riemannian manifolds. At first glance, ``radial symmetry'' could appear the most natural notion. However, the recent and very active area of research on the geometry of overdetermined problems of various nature, strongly suggests that the appropriate notion is that of an {\it isoparametric domain}; see especially the seminal paper \cite{Sh} by V. Shklover, the papers \cite{savo2016heat, savo2018geometric} by A. Savo and the very recent \cite{savoprovenzano2021} by L. Provenzano and A. Savo.

Isoparametric hypersurfaces in space-forms have a long history that goes back to the first half of the nineteen century and the modern viewpoint on this theory can be attributed to E. Cartan, \cite{Ca}. For a gentle introduction on the subject, with plenty of examples and special emphasis on the classification problem in different ambient spaces, we refer the reader to the lecture notes \cite{Va} by M. Dominguez-Vazquez and the references therein.\smallskip

\subsection{Isoparametric domains and tubes}
We recall that a \textit{singular Riemannian foliation} of the Riemannian manifold $(M,g)$ is a foliation $M=\cup_t \Sigma_t$ by smooth, embdedded submanifolds such that:
\begin{itemize}
\item every geodesic that is perpendicular to one leaf remains perpendicular to every leaf that it intersects;
\item there exists a family of smooth vector fields (integrable distribution) $\D = \{ X_1,...,X_k \}$  on $M$ spanning pointwise every tangent space to all the leaves.
\end{itemize} 

\begin{definition}[Isoparametric domain]\label{Def:IsoparametricDomain}
An isoparametric domain $\bar \O\subseteq M$ is a domain of $M$ endowed by a singular Riemannian foliation $\bar \O=\cup_t \Sigma_t $ whose regular leaves (i.e. of maximal dimension) are connected parallel hypersurfaces with constant mean curvature.
\end{definition}
Here, as usual, we call $\Sigma_{1}, \Sigma_{2}$ parallel if, for every $x_{1}\in \Sigma_{1}$ and $x_{2}\in \Sigma_{2}$,
\[
\dist(x_{1},\Sigma_{2}) = \dist(\Sigma_{1},x_{2}).
\]

Constant mean curvature hypersurfaces that influence the geometry of nearby parallel hypersurfaces, i.e. such that sufficiently close parallel hypersurfaces have constant mean curvature, are called \textit{isoparametric hypersurfaces}. Thus, an isoparametric domain is nothing but a domain with a singular Riemannian foliation whose regular leaves are isoparametric hypersurfaces.
 
Smooth isoparametric hypersurfaces in Riemannian manifolds arise as regular level sets of \textit{isoparametric functions}, i.e. smooth functions $f$ whose norm of the gradient and whose Laplacian can be expressed in terms of the function itself:
\[
|\nabla f | = \alpha(f) \quad \text{and} \quad \Delta f= \beta(f).
\]
 These two properties imply respectively that level sets are parallel and with constant mean curvature.
 
If every leaf of the foliation of an isoparametric domain is regular (and thus orientable), then the leaves can be realized as the level sets of the signed distance function $\dist(\bullet,P)$ from any fixed leaf $P$. Similarly, if the domain at hand has at least one focal variety $P$ (for instance, if the domain is compact), then the leaves of the foliation are level sets of the positive distance function from $P$. In both these cases, the submanifold $P$ is called the \textit{soul of the isoparametric domain}. This characterization allows one to name the leaves as \textit{equidistants}.

\begin{remark}\label{Remark:FocalVarieties} Observe that if the manifold $M$ is complete, then the focal varieties are smooth minimal submanifold of $M$ and are at most two (\cite{Wa}).
\end{remark}

\subsection{Homogeneous domains}
The isoparametric condition provides a very handy model of symmetric domains. However, as we shall see, sometimes the needed notion of symmetry is much  stronger. 

\begin{definition}\label{Def:HomogeneousDomain}
A homogeneous domain $\bar \O\subseteq M$ of a complete Riemannian manifold $(M,g)$ is an isoparametric domain whose regular leaves are orbits of the action of a closed subgroup $G\subset \textnormal{Iso}_{0}(M)$, the identity component of the group $\mathrm{Iso}(M)$ of all isometries of $M$.
\end{definition}

Thus, a  domain is homogeneous if the regular leaves of the singular Riemannian foliation are homogeneous hypersurfaces with respect to the same group $G$ of isometries of the ambient space.

A straightforward consequence of the fact that $G$ acts transitively on each leaf is that the principal curvatures of the leaves are constant. Moreover, note explicitly that if $\dim M =m$, since each regular leaf is homogeneous and can be written as $\Sigma_t=G/H_p$ for $H_p\subset G$ isotropy subgroup of $G$ at $p\in \Sigma_t$, then $\dim G=k\geq  m-1$. \smallskip

From the perspective of the present paper, the most important property enjoyed by homogenenous domains is that the leaves display a lot of (and in fact same) isometric symmetries. These symmetries are encoded in the notion of a Killing vector field that we are going to recall.

A smooth vector field $X$ on $M$ is said to be {\it Killing} if, for every vector fields $Y,Z$,
\[
(L_{X}g)(Y,Z) = g(\nabla_{Y}X,Z) + g(\nabla_{Z}X,Y) =0.
\]
Equivalently, the flow  $\phi(x,t)$ of $X$ is a local $1$-parameter group of isometries:
\[
\phi_{t}^{\ast} g = g.
\]
Note that, by the very definition, any Killing vector field $X$ satisfies
\[
\div X =0.
\]
Note also that if $X$ is a Killing vector field on $(M,g)$, which is pointwise tangential to an embedded submanifold $P$, then $X|_{P}$ is a Killing vector field of $P$.\smallskip

Now, let $\bar \O$ be a homogeneous domain with group $G$ and whose regular leaves are homogeneous hypersurfaces $\Sigma_{t}$ and recall from Remark \ref{Remark:FocalVarieties} that $\bar{\O}$ has at most two focal varieties $P_1$ and $P_2$. Consider the Riemannian submersion given by the projection
\[
\begin{array}{ccc}
 \pi:  \bar \O\setminus (P_1 \cup P_2) & \longrightarrow & \rr \\
        \Sigma_{t} & \longmapsto & \Sigma_{t}/G=point
\end{array}
\]
and note that
\begin{align}\label{Eq:VerticalSpace}
\mathcal{V}_p=T_p\Sigma_t\ \ \ \ \forall p \in \Sigma_t
\end{align}
where $\mathcal{V}_p=\textnormal{Ker}(d_{p}\pi)$ is the vertical space at $p$. For any $p\in \Sigma_t$ the space $\mathcal{V}_p$ is spanned by the set $\frak{K}(\bar \O)$ of all Killing vector fields of $\bar \O$ evaluated at $p$. These, in turn, identify with the elements of the Lie algebra $\frak{g}$ of $G$ via the map
\begin{align*}
\begin{array}{ll}
\frak{g}&\longrightarrow \frak{K}(\bar \O)\\
\frak{X}&\longmapsto X
\end{array}
\end{align*}
where
\[
X:p\mapsto \left.\frac{d}{dt}\right\vert_{t=0}\Big(\exp(t\frak{X})(p)\Big).
\]
Thus, letting $m-1 \leq k=\dim G \leq m(m-1)/2$, we can select a distribution of linearly independent Killing vector fields
\[
\D = \{X_{1},\cdots,X_{k}\} \subseteq \frak{K}(\bar \O)
\]
whose integral manifolds are the hypersurfaces $\Sigma_{t}$. For further information on the topic we suggest \cite{Pe}.

\subsection{Examples}
It is time to present a brief list of concrete examples of isoparametric and homogenenous domains.

\begin{example}[Balls in model manifolds]\label{ex: balls-homog}
Let $\mathbb{M}^n_\sigma = [0,R)\times_\sigma \mathbb{S}^{n-1}$ be a model manifold, where $R \in (0,+\infty]$. Then, geodesic balls centred at the pole are homogeneous domains with the homogeneous foliation provided by the geodesic spheres concentric to the pole. The corresponding group is $G=\mathbf{SO}(n)$.
\end{example}

\begin{example}[Annuli in warped products]\label{Ex:WarpedProduct}
Take a warped product manifold $M = I \times_{\s} N$ where $(N,g^{N})$ is an $(m-1)$-dimensional Riemannian manifold without boundary, $I\subset \rr$ is a real open interval and $\s(t) > 0$ is a smooth function on $I$. Explicitly, the Riemannian metric $g$ of $M$ is given by
\[
g = dt\otimes dt + \s^{2}(t) g^{N}.
\]
Take a domain either of the form $\bar \Omega = [a,b] \times N$ or $\bar \Omega = [a,+\infty) \times N$. Since the (translated) $t$-coordinate $r(t,\xi) = t -a $ is precisely the (absolute) distance function from the hypersurface $\Sigma_{a} = \{ a \} \times N \hookrightarrow M$ we have that
\[
| \nabla r | =1
\]
and the level sets
\[
\Sigma_{t + a} = r^{-1}(t) = \{t + a \} \times N,
\]
with $0 \leq t \leq b-a$, are parallel hypersurfaces. Moreover, the second fundamental form and the mean curvature of $\Sigma_{t}$ with respect to Gauss map $\vec \nu = \nabla r$ are given, respectively, by
\[
\mathrm{II}_{\Sigma_{t}} = \Hess(r)|_{\Sigma_{t}} =  \s'(t+a)\s(t+a) g^{N}
\]
and
\[
H_{\Sigma_{t}} = \Delta r = (m-1) \frac{\s'}{\s}(t+a).
\]
It follows that $r$ is an isoparametric function turning $\bar \O$ into an isoparametric domain. We note explicitly that each leaf $\Sigma_{t}$ is totally umbilical (namely, the traceless second fundamental form vanishes identically).

In case $(N,g^{N})$ is a compact Lie group endowed with a bi-invariant Riemannian metric, then the domain $\bar \Omega = [a,b] \times N$ inside $I \times_{\s} N$ is homogeneous with group $N$. Actually the same holds if $N=G/H$ is a homogeneous manifold.
\end{example}

\begin{example}[Euclidean homogenenous domains with non-compact leaves]\label{Ex:IsoparametricEuclidean}
Taking the Euclidean space $\mathbb{R}^n$ we easily obtain two different types of isoparametric domains with non-compact leaves:
\begin{itemize}
\item \textit{Cylindrical annuli:} consider the tube whose equidistants are the right cylinders $\{\Sigma_t\}_{t\in (a,b)}$ with axis given by a straight line $a$ through the origin $o\in \mathbb{R}^n$. Thanks to the isotropy of the Euclidean space, we can suppose that $a=\mathbb{R} \vec{e}_n=\mathbb{R}(0,...,0,1)$. Then, each leaf takes the form
\begin{displaymath}
\Sigma_t=\{(x',x_n)\in \mathbb{R}^n\ |\ x'\in \mathbb{S}^{n-1}_t,\ x_n\in \mathbb{R}\}
\end{displaymath}
for $\mathbb{S}^{n-1}_t$ the $(n-1)$-sphere of radius $t$, centred at the origin.

In this way we obtain an isoparametric foliation of the domain $\bar \Omega=\cup_{t\in [a,b]}\Sigma_t$ with leaves that have constant mean curvature equal to $H(\Sigma_t)=\frac{1}{t}$. A possible isoparametric function is
\begin{align*}
f(x_1,...,x_n)=\sqrt{x_1^2+...+x_{n-1}^2}=|x'|
\end{align*}

\item \textit{Slabs:} consider the tube whose equidistants are the hyperplanes $\{\Sigma_t\}_{t\in (a,b)}$ parallel to
\begin{displaymath}
\Sigma_0=\{x\in \mathbb{R}^n\ |\ x\cdot \vec{\nu}_0=0\}
\end{displaymath}
for a fixed vector $\vec{\nu}_0\in \mathbb{S}^{n-1}$.

As before, we can suppose $\vec{\nu}_0=\vec{e}_n$. Then, the leaves are 
\begin{displaymath}
\Sigma_t=\Sigma_0+t\vec{\nu}_0=\{(x',t)\ |\ x'\in \mathbb{R}^{n-1}\equiv \Sigma_0\}
\end{displaymath}
These hyperplanes give the domain $\bar \Omega=\cup_{t\in [a,b]}\Sigma_t$ an isoparametric structure, whose leaves have vanishing mean curvature. A possible isoparametric function is
\begin{align*}
f(x_1,...,x_n)=x_n
\end{align*}
\end{itemize}
In both cases, the domain $\bar \O$ is homogeneous with groups, respectively, $G=\mathbf{SO}(n)$ and $G=\mathbb{R}^{n-1}$.
\end{example}

\begin{example}[Generalized Hopf-Fibration]
Let $M = \mathbb{S}^3$ and $F(x)=x_1^2+x_2^2-x_3^2-x_4^2$ be the Cartan-Munzner polynomial that gives rise to Clifford tori $T(r) = \mathbb{S}^1(r) \times \mathbb{S}^1(\sqrt{1-r^2})$ with $0<r<1$. Then $F^{-1}([t_1,t_2])$ is a homogeneous domain by the action of $G= \mathbf{SO}(2) \times \mathbf{SO}(2)$. Similar examples can be constructed in the higher dimensional spheres $\mathbb{S}^n$, using the isoparametric functions $F(x)=l (x_1^2+...+x_k^2)-k(x_{k+1}^2+...+x_{n}^2)$ for $k+l=n+1$. Note that the leaves of these isoparametric domains are not totally umbilical (and, in particular, they have not a warped product structure of the form $I \times_\sigma N$).
\end{example}

\begin{example}[Cartan homogenenous domains]
 Tubes around tori are just one of the possible families of examples of homogenenous domains in the sphere $\ss^{m}$. For different choices of the Cartan-Munzner polynomial, corresponding to different choices of the Lie subgroup $G \subset \mathbf{SO}(m+1)$, we refer to \cite{Sh}. An account of more examples, in different ambient spaces, can be found in \cite{Va}.
\end{example}

\subsection{Weighted symmetric domains}
When formulated in the context of a weighted Riemannian manifold $M_{\Psi}$, the notion of isoparametric domain can be naturally generalized as follows.

Recall that, given a smooth hypersurface $\Sigma$ oriented by $\vec \nu$ inside the weighted manifold $M_{\Psi}$, its {\it weighted mean curvature} (in the sense of Gromov) $\vec H_{\Psi} = H_{\Psi} \vec \nu$ is given by
\[
H_{\Psi} = H - g(\nabla \Psi, \vec\nu)
\]
where $\vec H = H \vec{\nu}$ is the usual mean curvature vector field, i.e., the (unnormalized) trace of the second fundamental form.
\begin{definition}[$\Psi$-isoparametric domain]
 Let $M_{\Psi}$ be a weighted Riemannian manifold. We say that $\bar \O$ is a $\Psi$-isoparametric domain if $\bar \O$ is foliated by parallel hypersurfaces $\Sigma_{t}$ of constant weighted mean curvature. Equivalently, each leaf $\Sigma_{t}$ is the level set of a $\Psi$-isoparametric function $f$:
 \[
 |\nabla f | = \alpha(f) \quad \text{and} \quad \Delta_{\Psi} f = \beta (f).
 \]
\end{definition}
\smallskip

The notion of a homogeneous domain can be extended to the weighted setting using a similar spirit. In this case, however, it is not a-priori clear how to incorporate the weighted structure into the homogeneity condition. We choose to adopt the following
\begin{definition}[$\Psi$-homogenenous domain]
  Let $M_{\Psi}$ be a weighted Riemannian manifold. Say that $\bar \O$ is a $\Psi$-homogeneous domain if it is a $\Psi$-isoparametric domain and a homogeneous domain simultaneously. \\
  Equivalently, $\bar \O$ is $\Psi$-homogeneous if it is a homogeneous domain satisfying the ``weight compatibility condition''
\begin{align}\label{WeightCompatibilityCondition}
g(\nabla \Psi, \vec{\nu}) = const \ \ \textnormal{on each leaf } \Sigma_{t}
\end{align} 
\end{definition}
The equivalence of these two conditions come from the very definition of weighted mean curvature and the fact that a homogenenous domain has constant (ordinary) mean curvature.\smallskip

\begin{remark}[From homogenenous to $\Psi$-homogenenous]\label{rem: psi-homog}
It is worth noting that, if $P$ is the soul of $\bar \O$ and $d(x) = \dist(x,P)$, the natural choice $\Psi(x)=\hat{\Psi}(d(x))$ turns any(!) homogeneous domain into a $\Psi$-homogeneous domain. However, as we shall see, there are interesting $\Psi$-homogeneous domains that do not fall in this category. See Example \ref{Ex:IsoparametricGaussian}.
\end{remark}

\begin{example}
By definition of $\Psi$-symmetry and according to Remark \ref{rem: psi-homog}, Examples   \ref{ex: balls-homog} and \ref{Ex:WarpedProduct}  trivially generalize, respectively, to the case of weighted model manifolds and annuli in weighted warped product manifolds, up to assuming taht the weight has the form $\Psi(x) = \hat \Psi (d(x,o))$ and $\Psi(x) = \hat \Psi (\dist(x,\Sigma_{a}))$.
\end{example}

\begin{example}[Gaussian isoparametric domains with non-compact leaves]\label{Ex:IsoparametricGaussian}
Take the Gaussian space $\mathbb{G}^n$. The weighted mean curvature of a $\vec{\nu}$-oriented smooth hypersurface $\Sigma \subset \mathbb{G}^n$ is
\begin{displaymath}
H_\Psi = H-g(-x,\vec{\nu})=H+g(x,\vec{\nu})
\end{displaymath}
Using this fact, we can easily generalize the two examples obtained in \eqref{Ex:IsoparametricEuclidean}:
\begin{itemize}
\item \textit{Weighted cylindrical annuli:} As done in the non-weighted case, we consider
\begin{align*}
\Sigma_t=\{(x',x_n)\in \mathbb{R}^n\ |\ x'\in \mathbb{S}_t^{n-1},\ x_n\in \mathbb{R}\}
\end{align*}
for $\mathbb{S}^{n-1}_t$ the $(n-1)$-sphere of radius $t$, centred at the origin.

It follows that the normal vector field to the leaf $\Sigma_t$ is
\begin{displaymath}
\vec{\nu}_t(x)=\vec{\nu}_t\Big((x',x_n)\Big)=\frac{x'}{|x'|}\ \ \ \ \forall x\in \Sigma_t
\end{displaymath}
where we are identifying $x'$ with $(x',0)$. So
\begin{displaymath}
g(x,\vec{\nu}_t(x))=\frac{|x'|^2}{|x'|}=|x'|=t
\end{displaymath}
is constant on each $\Sigma_t$. Using this equality and the fact that the mean curvature of $\Sigma_t$ is $H(\Sigma_t)=\frac{1}{t}$, we obtain that
\begin{displaymath}
H_\Psi(\Sigma_t)=\frac{1}{t}+t
\end{displaymath}
constant on each $\Sigma_t$.

\item \textit{Weighted slabs:} As before, let $\vec{\nu}_0=\vec{e}_n$ and consider
\begin{displaymath}
\Sigma_t=\Sigma_0+t\vec{\nu}_0=\{(x',t)\ |\ x'\in \mathbb{R}^{n-1}\equiv \Sigma_0\}
\end{displaymath}
with normal vector field to $\Sigma_t$ given by
\begin{displaymath}
\vec{\nu}_t(x)=\vec{\nu}_t \Big((x',x_n)\Big)=\frac{(0,x_n)}{|x_n|}=\frac{t}{|t|} \vec{e}_n
\end{displaymath}
So
\begin{displaymath}
g(x,\vec{\nu}_t(x))=\frac{|x_n|^2}{|x_n|}=|x_n|=t
\end{displaymath}
and thus
\begin{displaymath}
H_\Psi(\Sigma_t)=H(\Sigma_t)+t=t
\end{displaymath}
constant on each $\Sigma_t$.
\end{itemize}
In particular, both weighted cylindrical annuli and weighted slabs are $\Psi$-homogeneous domains whose weight $\Psi$ is not symmetric.
\end{example}

\begin{example}[Gaussian-like weighted spaces]
Consider the weighted space $\mathbb{R}^n_\Psi=\Big(\mathbb{R}^n, g^{\mathbb{R}^n}, e^{-\Psi}dx\Big)$ for a symmetric weight $\Psi(x)=A|x|^2+B$ and $A,B\in \mathbb{R}$, $A\neq 0$. Then, the previous examples with non-compact leaves (parallel hyperplanes and coaxial cylinders) and the spherical tube shall continue to be $\Psi$-homogeneous domains.

Indeed, the gradient of the weight is
\begin{displaymath}
\nabla \Psi(x)=2Ax
\end{displaymath}
and following the previous calculations, we obtain that the weighted mean curvature of each equidistant of the above mentioned domains is constant.
\end{example}

%%%%%%%%%%%%%%%%%%%%%%%%%%%%%%%%%%%%%%%%%%%%%%%%%%%%%%%%%%%%%%%%

\section{Symmetric functions}\label{section-symmetricfunctions}
Laid the foundations of the theory of isoparametric domains, we must specify what we mean by \textit{symmetry} when we talk about functions defined on them. Accordingly, one introduces the {\it average operator}
\begin{equation}\label{averageoper}
\CA_{\Psi} (u)(x) = \frac{1}{\area_\Psi \Sigma_{t(x)}} \int_{\Sigma_{t(x)}} u(y) \da_{\Psi}
\end{equation}
and put the following
\begin{definition}
Let $\bar \O$ be a compact weighted isoparametric domain inside the weighted manifold $M_{\Psi}$. Say that the function $u$ on $\bar \O$ is symmetric if
\[
u(x) = \CA_{\Psi} (u)(x).
\]
\end{definition}

\begin{remark}[Symmetry condition using distance function]
 If $\bar \O$ is a compact $\Psi$-isoparametric domain with soul $P$ and $d(x) = \dist(x,P)$, then the following are equivalent:
\begin{itemize}
\item [(a)]  $u=\CA_{\Psi} (u)$.
\item [(b)] $u(x)  = \hat{u}(d(x))$.
\end{itemize}
 The advantage of  characterization (b) over (a) is that it makes sense even if $P$ is non-compact and $u$ is not necessarily integrable on the leaves of the foliation.
\end{remark}

One of the main features of weighted isoparametric domains is that the corresponding average operator, that preserves the smoothness of functions, commutes with the weighted Laplacian. This property is formalized in the following Lemma that extends \cite[Proposition 13]{savo2018geometric} to the weighted setting.

\begin{lemma}[Savo]\label{lem: Commutation law}
Let $\O$ be a smooth, compact, weighted isoparametric domain with soul $P$ inside the weighted manifold $M_{\Psi}$. Let $\CA_{\Psi} $ be the average operator defined on $L^{1}(\O,\dv_{\Psi})$ by \eqref{averageoper}. Then the following hold:
 \begin{enumerate}
 \item [(a)] If $u  \in C^{k+2}(\O )$, then $\CA_{\Psi} (u) \in C^{k}(\O )$.
 \item [(b)] Given $u \in C^{4}(\O )$, $\CA_{\Psi}  ( \Delta_{\Psi} u ) = \Delta_{\Psi} \CA_{\Psi} (u)$.
\end{enumerate}
\end{lemma}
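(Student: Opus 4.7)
The plan is to prove (b) first, as the computation driving it also sets up the tools needed for (a). Let $r(x) = \dist(x,P)$ be the distance from the soul. On the regular set $\bar\Omega\setminus P$ (open and dense by Remark \ref{Remark:FocalVarieties}), $r$ is smooth with $|\nabla r| = 1$, the leaves $\Sigma_t = r^{-1}(t)$ are precisely the regular leaves of the foliation, and the $\Psi$-isoparametric hypothesis gives $\Delta_\Psi r = H - g(\nabla\Psi,\vec\nu) = H_\Psi(t)$, constant on each $\Sigma_t$, so that $r$ is itself a $\Psi$-isoparametric function. Writing $A(t) = \area_\Psi \Sigma_t$ and $I(t) = \int_{\Sigma_t} u\,\da_\Psi$, the leafwise constancy of $\CA_\Psi(u)$ lets me set $F(t) = I(t)/A(t)$ so that $\CA_\Psi(u)(x) = F(r(x))$.

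First I would differentiate $F$ using the weighted first variation formula for moving hypersurfaces flowed with unit normal velocity $\vec\nu = \nabla r$: for any smooth $v$,
\[
\frac{d}{dt} \int_{\Sigma_t} v\,\da_\Psi = \int_{\Sigma_t} \bigl(\partial_r v + v\, H_\Psi\bigr)\,\da_\Psi,
\]
where $\partial_r v = g(\nabla v,\vec\nu)$. Applied to $v=1$ and $v=u$ this gives $A'(t) = H_\Psi(t)A(t)$ (using that $H_\Psi$ is constant on $\Sigma_t$) and $I'(t) = \int_{\Sigma_t} \partial_r u\,\da_\Psi + H_\Psi(t) I(t)$. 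The mean-curvature terms cancel in the quotient $F = I/A$, leaving the clean identities
\[
F'(t) = \CA_\Psi(\partial_r u)(t), \qquad F''(t) = \CA_\Psi(\partial_r^2 u)(t),
\]
the second obtained by applying the first variation formula to $\partial_r u$ in place of $u$.

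The decisive step is the tangential/normal splitting of $\Delta_\Psi u$ adapted to the foliation. In Fermi coordinates around $\Sigma_t$ one has $\Delta u = \partial_r^2 u + H(t)\partial_r u + \Delta_{\Sigma_t} u$, and combining this with the orthogonal decomposition $\nabla\Psi = (\partial_r\Psi)\vec\nu + \nabla^{\Sigma_t}\Psi$ together with $H_\Psi = H - \partial_r\Psi$ yields
\[
\Delta_\Psi u = \partial_r^2 u + H_\Psi(t)\,\partial_r u + \Delta_{\Psi,\Sigma_t} u,
\]
where $\Delta_{\Psi,\Sigma_t}$ is the weighted Laplacian of the closed hypersurface $(\Sigma_t, \Psi|_{\Sigma_t})$. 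By the weighted divergence theorem, $\int_{\Sigma_t}\Delta_{\Psi,\Sigma_t} u\,\da_\Psi = 0$, so averaging the display and invoking the previous identities gives
\[
\CA_\Psi(\Delta_\Psi u)(t) = F''(t) + H_\Psi(t) F'(t).
\]
Applying the same decomposition to the leaf-constant function $F(r(x))$ (whose tangential part vanishes) produces $\Delta_\Psi F(r)\big|_{r=t} = F''(t) + H_\Psi(t) F'(t)$, so the two agree, proving (b) on $\bar\Omega\setminus P$ and hence everywhere by continuity.

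For (a), the identity $F^{(j)}(t) = \CA_\Psi(\partial_r^j u)(t)$ derived above shows that $F \in C^k$ as soon as $u \in C^k$, so smoothness of $\CA_\Psi(u) = F \circ r$ is immediate on the regular set. The main obstacle is that across the soul the function $r$ is only Lipschitz, so passing from smoothness of $F(t)$ to smoothness of $F(r(x))$ on all of $\bar\Omega$ costs derivatives; the stated loss of two is exactly what is needed to absorb this singularity, and can be obtained either by working in Fermi coordinates around $P$ together with the classical even-extension criterion for radial smoothness, or by elliptic bootstrapping from the already-proven (b) applied iteratively to $u, \Delta_\Psi u, \Delta_\Psi^2 u, \dots$. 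I expect this regularity across the focal variety to be the hardest part of the argument; the algebraic content of (b) reduces to the twofold cancellation of mean-curvature terms, once the correct tangent/normal splitting of $\Delta_\Psi$ adapted to the foliation is in place.
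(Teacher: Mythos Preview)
Your proposal is correct and follows the same line as the paper's proof. The only difference in (b) is computational: the paper converts $\int_{\Sigma_t}\partial_r u\,\da_\Psi$ into $\int_{\{d<t\}}\Delta_\Psi u\,\dv_\Psi$ via the weighted Green identity and differentiates once more, whereas you use the Fermi splitting $\Delta_\Psi u = \partial_r^2 u + H_\Psi\,\partial_r u + \Delta^{\Sigma_t}_\Psi u$ together with the divergence theorem on the closed leaf to kill the tangential piece---both routes land on the same identity $F'' + H_\Psi F' = \CA_\Psi(\Delta_\Psi u)$. For (a), the paper's treatment of the soul (deferred to Savo) is exactly your first suggestion: pull back by the normal exponential map $\Phi:[-R,R]\times U(P)\to\bar\Omega$, use the symmetry $\Phi(-r,-\xi)=\Phi(r,\xi)$ to extend $u\circ\Phi$ smoothly across $r=0$, observe that the isoparametric condition forces the density $\theta_\Psi$ to depend only on $r$, and conclude that $\hat u(r)=\tfrac{1}{\area U(P)}\int_{U(P)}(u\circ\Phi)(r,\xi)\,d\xi$ is smooth in $r$.
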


\begin{notation}
 For the sake of brevity, we  shall write condition (b) as the commutation rule
 \[
 [\CA_{\Psi} ,\Delta_{\Psi}]  = 0.
 \]
 A similar convention will be adopted during the paper for other operators.
\end{notation}
The proof is a minor variation of the original one in the Riemannian setting. For the sake of completeness, the details are supplied in the Appendix.

\subsection{Local vs global symmetry}
The notion of symmetry defined in the previous subsection can be formulated equivalently in terms of a first order condition.\smallskip

Let $\bar \O$ be an isoparametric domain with compact soul $P$ inside the weighted Riemannian manifold $M_{\Psi}$. We set, as usual, $d(x) = \dist(x,P)$ so that $\bar \O= \cup_{r\in [r_1,r_2]} \Sigma_{r}$  is foliated by the smooth, embedded, parallel hypersurface $\Sigma_{r} = \{ x \in M : d(x) = r\}$ in the same isotopy class.

\begin{definition}[Local symmetry]
Say that $u \in C^{1}(\bar \O)$ is symmetric at $x_{0} \in \bar \O$ if, for any smooth vector field $X$ on $\bar \O$ satisfying
\[
i)\ X|_{x_{0}} \not=0,\quad ii)\ g(X|_{x_{0}},\nabla d(x_{0}))= 0,
\]
it holds
\[
X(u)(x_{0}) = g(X|_{x_{0}},\nabla u(x_{0})) =0.
\]
In case $u$ is symmetric at every point $x \in \bar \O$ we say that $u$ is locally symmetric on $\bar \O$.
\end{definition}

\begin{remark}
 Clearly, the local symmetry at $x_{0}$ can be formulated in either of the following equivalent ways.
\begin{enumerate}
 \item [i)] Let $(\nabla u(x_{0}))^{\top}$ denote the orthogonal projection of $\nabla u(x_{0})$ on the tangent space $T_{x_{0}}\Sigma_{d(x_{0})}$. Then
 \[
 (\nabla u(x_{0}))^{\top} = 0.
 \]
 \item [ii)] The gradient of $u$ at $x_{0}$ is parallel to $\nabla d(x_{0})$:
 \[
\nabla u (x_{0}) \in  \mathrm{span}{\nabla d (x_{0})}  = (T_{x}\Sigma_{d(x_{0})})^{\perp}.
\]
\end{enumerate} 

\end{remark}

\begin{lemma}
 Keeping the above notation, the function $u$ is locally symmetric on $\bar \O $ if and only if $u$ is symmetric in the global sense, i.e., $u(x) =\hat{u}(d(x))$.
\end{lemma}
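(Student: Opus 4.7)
The plan is to prove both implications by reducing local symmetry to its pointwise reformulation given in the Remark just above: $u$ is symmetric at $x_{0}$ exactly when $\nabla u(x_{0})\in\mathrm{span}(\nabla d(x_{0}))$, equivalently when the tangential component of $\nabla u(x_{0})$ along $\Sigma_{d(x_{0})}$ vanishes. Granting this pointwise statement, the lemma becomes essentially a matter of integrating, along curves inside a leaf, a gradient that is pointwise normal to the leaves.

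For the reverse implication I would simply apply the chain rule: from $u(x)=\hat{u}(d(x))$ one obtains $\nabla u(x)=\hat{u}'(d(x))\nabla d(x)$, which is manifestly parallel to $\nabla d(x)$, so the local symmetry condition holds at every point where $d$ is smooth.

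For the main direction the plan is to show that $u$ is constant on each regular leaf $\Sigma_{r}$, and then to define $\hat{u}(r)$ to be this common value (extending to the soul by continuity). Fix a regular leaf $\Sigma_{r}$ and two points $p,q\in\Sigma_{r}$. Since by the definition of isoparametric domain the regular leaves are connected smooth embedded hypersurfaces, I can pick a smooth curve $\gamma:[0,1]\to\Sigma_{r}$ with $\gamma(0)=p$, $\gamma(1)=q$. Because $\gamma$ lies in the level set $d^{-1}(r)$, the velocity satisfies $g(\gamma'(t),\nabla d(\gamma(t)))=0$. The pointwise reformulation of local symmetry then forces $\nabla u(\gamma(t))\parallel\nabla d(\gamma(t))$, so
\[
(u\circ\gamma)'(t)=g(\nabla u(\gamma(t)),\gamma'(t))=0,
\]
and $u(p)=u(q)$ follows by integration.

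The one piece of bookkeeping I expect to be slightly delicate (though not a real obstacle) is the passage between the vector-field definition of local symmetry and the pointwise statement $\nabla u(x_{0})\in\mathrm{span}(\nabla d(x_{0}))$ used above. To recover the pointwise condition, given any $v\in T_{x_{0}}\Sigma_{d(x_{0})}\setminus\{0\}$ I would extend $v$ to a smooth vector field $X$ on $\bar\Omega$, using a tubular neighbourhood of $\Sigma_{d(x_{0})}$ together with a cutoff, so that $X|_{x_{0}}=v$ and $g(X,\nabla d)\equiv 0$ on a neighbourhood of $x_{0}$; local symmetry applied to $X$ then yields $g(v,\nabla u(x_{0}))=0$, and letting $v$ vary over a basis of $T_{x_{0}}\Sigma_{d(x_{0})}$ gives the claim. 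Finally, the behaviour at the soul $P$ (if present), where $d$ fails to be smooth, is taken care of by continuity of $u$: $P$ is itself a single leaf of the foliation, so its (unique) value extends $\hat{u}$ continuously to the endpoint, completing the identity $u=\hat{u}\circ d$ on all of $\bar\Omega$.
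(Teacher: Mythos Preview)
Your proposal is correct and follows essentially the same route as the paper. The paper only writes out the nontrivial direction and argues by contradiction (finding, via the mean value theorem, a point on a curve in $\Sigma_r$ where $(u\circ\gamma)'<0$), whereas you argue directly that $(u\circ\gamma)'\equiv 0$; the underlying mechanism---connect two points of a leaf by a curve and use that $\nabla u$ is normal to the leaf---is identical, and your added bookkeeping (reverse implication, extension of a tangent vector to a tangential field, continuity at the soul) is sound and somewhat more thorough than what the paper records.
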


\begin{proof}
Assume that $u$ is locally symmetric and suppose by contradiction that there exist $r\geq 0$ and $x,y\in \Sigma_r$ such that $u(x)>u(y)$. Each leaf $\Sigma_r$ is connected, therefore we can consider a smooth immersed\footnote{a connected smooth manifold $N$ can be always endow with a complete Riemannian metric $h$. Therefore, any two given points $x,y \in N$ are connected by a minimizing $h$-geodesic, which is a smooth immersed curve of $N$.} curve $\gamma:[0,1]\to \Sigma_r$ joining $\g(0)= x$ to $\g(1)=y$. Since $u\circ \g$ is a $C^{1}$ function satisfying $u\circ \g(0) > u\circ \g(1)$, there exists $\bar t \in [0,1]$ such that
\[
g((\nabla u)(\g(\bar t)) , \dot \g(\bar t) ) =  \frac{d}{dt} (u\circ \gamma)(\bar t)<0.
\]
This contradicts the local symmetry because $0 \not= \dot \g(\bar t) \in T_{\g(\bar t)}\Sigma_{r}$.
\end{proof}

%%%%%%%%%%%%%%%%%%%%%%%%%%%%%%%%%%%%%%%%%%%%%%%%%%%%%%%%%%%%%%%%%%

\section{Maximum principles, uniqueness and symmetry}\label{section-MaximumPrinciple-Uniqueness}\label{section-maximum-unique-symmetry}

Maximum principles for Schr\"odinger operators and uniqueness issues for solutions of semilinear PDEs permeate the whole theory of symmetry problems and the whole paper. Therefore, we devote this preliminary section to review briefly these topics both in the compact and in the non-compact settings.

\subsection{Compact maximum principle} In their book \cite[Section 5, Theorem 10]{PW},  Protter-Weinberger introduced a form of the Maximum Principle valid for elliptic operators in the presence of zeroth order terms. Their celebrated result states as follows.

\begin{proposition}[Compact Maximum Principle]\label{prop:CompactPWMaxPrinc}
Let $M_\Psi=(M,g, \dv_{\Psi})$ be a compact weighted Riemannian manifold with boundary $\partial M \not= \emptyset$ and suppose we are given on $M_\Psi$ the Schrödinger operator $\CL=\Delta_\Psi-q$, where $q\in C^0(M)$. Assume that there exists a function $\varphi \in C^0(M)\cap C^2(\inte(M))$ solution of the problem
\begin{align}\label{eq:CompactPhi}
\left\{\begin{array}{ll}
\CL \varphi \leq 0 & \inte M\\
\varphi >0 & M
\end{array}\right.
\end{align}
Then, any solution $u\in C^0(M)\cap W^{1,2}_\textnormal{loc}(\inte M )$ of
\begin{align*}
\left\{\begin{array}{ll}
\CL u \geq 0 & \inte M\\
u \leq 0 & \pM
\end{array}\right.
\end{align*}
satisfies $u \leq 0$ in $M$.
\end{proposition}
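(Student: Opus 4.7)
My plan is to adapt the classical Protter--Weinberger argument to the weighted setting via the substitution $v = u/\varphi$, which is well-defined on $M$ by the positivity of $\varphi$. The point of this change of variable is that the Schr\"odinger operator $\CL = \Delta_{\Psi} - q$ is conjugated into a divergence-form operator with no zeroth order term, at which point a standard integration by parts closes the argument.

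The first step is to derive the pointwise identity
\begin{equation*}
\div_{\Psi}\!\bigl(\varphi^{2}\nabla v\bigr) = \varphi\,\CL u \, - \, \varphi\, v\, \CL\varphi,
\end{equation*}
which follows from a direct computation using $u = \varphi v$, $\nabla u = v\nabla \varphi + \varphi \nabla v$, and the Leibniz rule for $\Delta_{\Psi}$. In the smooth case this is immediate; in the present $W^{1,2}_{\textnormal{loc}}$ setting it has to be read distributionally. Next, since $u \leq 0$ on $\pM$ and $\varphi > 0$ on $M$, the function $v_{+} := \max(v,0)$ is continuous on $M$ and vanishes on $\pM$, so it is a legitimate test function (modulo a cutoff; see below). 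Testing the identity above against $v_{+}$ and integrating by parts over $M$, and using the elementary facts $v\,v_{+} = v_{+}^{2}$ and $g(\nabla v, \nabla v_{+}) = |\nabla v_{+}|^{2}$ almost everywhere, one obtains
\begin{equation*}
\int_{M}\varphi^{2}\,|\nabla v_{+}|^{2}\,\dv_{\Psi} \; = \; -\int_{M}\varphi\, v_{+}\,\CL u\,\dv_{\Psi} \, + \, \int_{M}\varphi\, v_{+}^{2}\, \CL\varphi\,\dv_{\Psi}.
\end{equation*}
Both terms on the right-hand side are non-positive because $\CL u \geq 0$, $\CL\varphi \leq 0$ and $\varphi, v_{+}, v_{+}^{2} \geq 0$. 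Hence $\nabla v_{+} \equiv 0$ on $\inte M$, and the connectedness of $M$ combined with $v_{+}|_{\pM} = 0$ forces $v_{+} \equiv 0$, i.e. $u = \varphi v \leq 0$ throughout $M$.

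The main technical obstacle is to justify the integration by parts under the given regularity $u \in C^{0}(M) \cap W^{1,2}_{\textnormal{loc}}(\inte M)$: we do not know a priori that $\nabla v_{+}$ is integrable up to the boundary. I would circumvent this by a standard exhaustion: pick cutoffs $\eta_{\varepsilon} \in C^{\infty}_{c}(\inte M)$ with $0 \leq \eta_{\varepsilon} \leq 1$ and $\eta_{\varepsilon} \uparrow 1$ on $\inte M$, and test the distributional identity against $\eta_{\varepsilon}^{2}\,v_{+}$. This produces the same energy identity up to a remainder of the form
\begin{equation*}
\int_{M}\varphi^{2}\,\eta_{\varepsilon}\,v_{+}\, g(\nabla v_{+}, \nabla \eta_{\varepsilon})\,\dv_{\Psi},
\end{equation*}
which is controlled by Cauchy--Schwarz and the continuity of $v_{+}$ together with its vanishing on $\pM$: indeed, $v_{+}$ is uniformly small on $\supp(\nabla \eta_{\varepsilon})$ as $\varepsilon \to 0$, so the remainder drops out in the limit. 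Once this is in place, the rest of the proof is a direct consequence of the sign hypotheses $\CL u \geq 0 \geq \CL \varphi$.
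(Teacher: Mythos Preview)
Your approach is essentially the same as the paper's: both rest on the Protter--Weinberger substitution $v = u/\varphi$, which removes the zeroth-order term. The paper packages this a bit more concisely by observing that $\div_{\Psi}(\varphi^{2}\nabla v) = \varphi^{2}\Delta_{\Phi}v$ for the modified weight $\Phi = \log(\varphi^{-2}) + \Psi$, so that $\omega = u_{+}/\varphi$ is a nonnegative $\Delta_{\Phi}$-subharmonic function vanishing on $\partial M$; it then invokes the ordinary weak maximum principle for $\Delta_{\Phi}$ as a black box rather than integrating by parts explicitly. Your divergence identity and the paper's change of weight are the same computation in different clothing.

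There is, however, a genuine technical gap in your cutoff step. You claim the remainder
\[
\int_{M}\varphi^{2}\,\eta_{\varepsilon}\,v_{+}\, g(\nabla v_{+}, \nabla \eta_{\varepsilon})\,\dv_{\Psi}
\]
vanishes in the limit because $v_{+}$ is uniformly small on $\supp(\nabla\eta_{\varepsilon})$. But after Cauchy--Schwarz and absorption you are left with $\int \varphi^{2} v_{+}^{2}|\nabla\eta_{\varepsilon}|^{2}\,\dv_{\Psi}$, and for standard collar cutoffs $|\nabla\eta_{\varepsilon}|\sim 1/\varepsilon$ on a region of volume $\sim\varepsilon$, so this integral behaves like $\omega(\varepsilon)^{2}/\varepsilon$ where $\omega$ is the modulus of continuity of $v_{+}$ at $\partial M$. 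Mere continuity does not force this to zero. The clean fix is to test against $(v-\delta)_{+}$ for $\delta>0$: since $v\leq 0$ on $\partial M$ and $v$ is continuous on the compact $M$, the set $\{v>\delta\}$ is compactly contained in $\inte M$, so no cutoff is needed at all; you get $\int \varphi^{2}|\nabla(v-\delta)_{+}|^{2}\,\dv_{\Psi}\leq 0$ directly, hence $v\leq\delta$, and then let $\delta\downarrow 0$. This is precisely the standard proof of the weak maximum principle that the paper is implicitly invoking.
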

\begin{proof}
Consider the positive part of the function $u$
\begin{align*}
u_+=\max\{u,0\}
\end{align*}
Then $u_+$ satisfies
\begin{align*}
\left\{\begin{array}{ll}
\CL u_+ \geq 0 & \inte M\\
u_+ = 0 & \pM;
\end{array}
\right.
\end{align*}
see e.g.  \cite[Lemma 6.1]{PS} for a proof that works in the nonlinear setting. Defining the function $0\leq \omega=\frac{u_+}{\varphi}$ on the weighted manifold $M_{\Phi}$, where $\Phi = \log(\vp^{-2})+\Psi$, we get
\begin{align*}
\left\{
\begin{array}{ll}
\Delta_{\Phi}\omega \geq 0 & \inte M\\
\omega = 0 & \partial M,
\end{array}
\right.
\end{align*}
By the usual maximum principle we obtain $\omega\leq 0$ in $M$ that implies $\omega =0$ in $M$, i.e. $u_+ = 0$ in $M$, as claimed.
\end{proof}

Observe that for a compact Riemannian manifold with boundary $M$ there is no loss of generality in assuming that $M$ is a smooth bounded domain inside a closed Riemannian manifold $(N,g^{N})$; \cite[Theorem A]{PV}. Thus, the existence of a function $\varphi$ satisfying \eqref{eq:CompactPhi} is guaranteed under the assumption that $\l_{1}^{-\CL}(M) >0$. Indeed, in this case, once $q$ and $\Psi$ are extended with the same regularity to $N$, we can slightly enlarge $M$ to some smooth domain $\O \Subset N$ with $\l_1^{-\CL}(\O)> 0$ and take as $\varphi$ the restriction to $M$ of the first eigenfunction on $\O$. The existence of such a domain $\O$ could be seen as a trivial consequence of a deep continuity property of the Dirichlet eigenvalues with respect to the (Gromov-)Hausdorff convergence. See e.g. the paper \cite{Ch} by Chenais for the case of Hausdorff converging uniformly Lipschitz domains of the Euclidean space. However, one can obtain the existence of $\O$ using much more elementary considerations. We are going to provide the arguments for the sake of completeness.

\begin{lemma}\label{Lem:L1ContinuityEnlarging}
Let $N_\Psi=(N,g^{N},  \dv_\Psi)$ be a complete weighted Riemannian manifold (without boundary) and $\CL = \Delta_\Psi - q$ with $q \in C^{0}(N)$. Let $D \Subset N$ be a smooth  domain such that  $\l^{-\CL}_{1}(D)>0$. Then there exists a  smooth  domain $D \Subset \Omega\Subset N$ satisfying $\l_1^{-\CL}(\O)> 0$.
\end{lemma}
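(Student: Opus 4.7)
My plan is to enlarge $D$ via a decreasing family of smooth relatively compact domains and rule out the collapse of the first eigenvalue by a standard compactness and lower-semicontinuity argument.

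First, I would fix a sequence of smooth domains $\Omega_n$ with $D \Subset \Omega_{n+1} \Subset \Omega_n \Subset N$ and $\bigcap_n \overline{\Omega_n}=\overline{D}$. Concretely, the sublevel sets $\{x\in N:\dist(x,\overline{D})<\varepsilon_n\}$ work for almost every $\varepsilon_n\searrow 0$, smoothing if necessary; the local compactness of $N$ guarantees that these tubular neighborhoods are relatively compact. By domain monotonicity of the first Dirichlet eigenvalue, $\{\lambda_1^{-\CL}(\Omega_n)\}$ is nondecreasing and bounded above by $\lambda_1^{-\CL}(D)$.

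Next, I would argue by contradiction, supposing $\lambda_1^{-\CL}(\Omega_n)\le 0$ for every $n$. Let $\varphi_n\in H^1_0(\Omega_n)$ be the first Dirichlet eigenfunction with $\|\varphi_n\|_{L^2(\dv_\Psi)}=1$, extended by zero to $\Omega_1$. Then
\[
\int_{\Omega_1}|\nabla \varphi_n|^2\,\dv_\Psi \;\le\; -\int_{\Omega_1} q\,\varphi_n^2\,\dv_\Psi \;\le\; \|q\|_{L^\infty(\overline{\Omega_1})},
\]
so $\{\varphi_n\}$ is bounded in $H^1(\Omega_1)$. By Rellich--Kondrachov and weak compactness, a subsequence converges weakly in $H^1(\Omega_1)$ and strongly in $L^2(\Omega_1)$ to some $\varphi$ with $\|\varphi\|_{L^2(\dv_\Psi)}=1$; in particular $\varphi\not\equiv 0$. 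Since each $\varphi_n$ vanishes off $\Omega_n$ and $\Omega_n\searrow \overline{D}$, the limit $\varphi$ vanishes almost everywhere on $\Omega_1\setminus\overline{D}$; combined with the smoothness of $\partial D$, this allows one to identify $\varphi|_D$ with an element of $H^1_0(D)$. Weak lower semicontinuity of the Dirichlet energy together with strong $L^2$ convergence (to handle the $q$-term) then yields
\[
\lambda_1^{-\CL}(D)\;\le\;\int_D\bigl(|\nabla \varphi|^2+q\,\varphi^2\bigr)\,\dv_\Psi\;\le\;\liminf_{n\to\infty} \lambda_1^{-\CL}(\Omega_n)\;\le\; 0,
\]
contradicting the hypothesis. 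Hence there exists $n$ with $\lambda_1^{-\CL}(\Omega_n)>0$, and we may take $\Omega=\Omega_n$.

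The step I expect to be most delicate is the promotion of the weak limit $\varphi$ to an admissible test function on $D$: a priori $\varphi$ lies only in $H^1(\Omega_1)$ and vanishes almost everywhere outside $\overline{D}$, and concluding that $\varphi\in H^1_0(D)$ requires the smoothness of $\partial D$ and a standard truncation/trace argument. The remaining ingredients (domain monotonicity, existence and positivity of the first Dirichlet eigenfunction on smooth bounded domains, Rellich--Kondrachov, and weak lower semicontinuity) are entirely classical in this weighted setting since $q$ is continuous and $\overline{\Omega_1}$ is compact.
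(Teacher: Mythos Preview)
Your proposal is correct and follows essentially the same route as the paper: a nested exhaustion $\Omega_n\searrow \overline D$, uniform $W^{1,2}$ bounds on the normalized first eigenfunctions, Rellich--Kondrachov, identification of the weak limit as an element of $W^{1,2}_0(D)$ via the smoothness of $\partial D$, and weak lower semicontinuity of the quadratic form. The only cosmetic differences are that the paper argues directly (showing $\lambda_1^{-\CL}(\Omega_{n_k})\to\lambda_1^{-\CL}(D)$) rather than by contradiction, and for the step you flag as delicate it invokes a precise reference (Bei--G\"uneysu, \emph{Kac regular sets and Sobolev spaces}, Proposition~2.11) to pass from ``$v\in W^{1,2}(\Omega_1)$ vanishing a.e.\ outside $D$'' to $v\in W^{1,2}_0(D)$.
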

\begin{proof}
Consider a sequence of nested smooth domains $N\Supset \Omega_1 \Supset \Omega_2 \Supset ...\ \Omega_n \Supset\Omega_{n+1}\ ... \Supset  D $ satisfying $\bigcap_n \Omega_n = \bar D$ and let $Q_n$ and $Q$ be the quadratic forms associated to the Rayleigh quotient on $\O_n$ and on $D$ respectively
\begin{align*}
Q_n(u):=\int_{\O_n} \left(|\nabla u|^2 + q u^2 \right) \dv_\Psi,\ \ \ \ \ u \in W^{1,2}_0(\O_n, \dv_\Psi)\\
Q(u):= \int_{D} \left(|\nabla u|^2 + q u^2 \right) \dv_\Psi,\ \ \ \ \ u \in W^{1,2}_0(D, \dv_\Psi).
\end{align*}
By the domain monotonicity of the first Dirichlet eigenvalue we have
\[
\lambda^{-\CL}_1(D)\geq \lambda^{-\CL}_1(\Omega_n),\, \forall n\in \nn.
\]
Therefore, if $\{u_n\}_n \subset C^\infty(\bar\Omega_n)$ is the sequence of first Dirichlet eigenfunctions corresponding to $\l_{1}^{-\CL}(\Omega_{n})$, normalized so to have
\begin{align*}
\left\{
\begin{array}{l}
u_n \geq 0\ \ \ \ \ \textnormal{in}\ \O_n\\
\| u_n\|_{L^2(\Omega_n, \dv_\Psi)}=1,
\end{array}
\right.
\end{align*}
then, by extending each $u_n$ to 0 in $\Omega_1\setminus \Omega_n$ so that $u_{n} \in W^{1,2}_{0}(\Omega_{1})$, we get
\begin{displaymath}
\left\{ \begin{array}{l}
\| \nabla u_n \|^2_{L^2(\Omega_1, \dv_\Psi)}=\|\nabla u_n\|^2_{L^2(\Omega_n, \dv_\Psi)} \medskip \\
\| u_n\|_{L^2(\Omega_1, \dv_\Psi)}= \| u_n\|_{L^2(\Omega_n, \dv_\Psi)}=1 \medskip \\
Q_1(u_n)=Q_n(u_n)=\lambda^{-\mathcal{L}}(\O_n)\leq \lambda^{-\mathcal{L}}(D) .
\end{array}\right.
\end{displaymath}
In particular
\begin{align*}
\| \nabla u_n\|^2_{L^2(\O_1,\dv_\Psi)}&=\lambda_1^{-\mathcal{L}}(\O_n)- \int_{\O_n} q u_{n}^2\ \dv_\Psi\\
&\leq \lambda_1^{-\mathcal{L}}(D) + \| q \|_{L^\infty(\Omega_{1},\dv_\Psi)}.
\end{align*}
We have deduced that $\{u_n\}_n$ is a bounded sequence in $W^{1,2}_0(\Omega_1, \dv_\Psi)$. Then there exists a subsequence $\{u_{n_k}\}_{k}$ converging weakly in $W^{1,2}_0(\Omega_1, \dv_\Psi)$ and strongly in $L^{2}(\Omega_{1}, \dv_\Psi)$ to some function $v \in W^{1,2}_0(\Omega_1, \dv_\Psi)$. Clearly,
\[
\| v \|_{L^2(\Omega_1, \dv_\Psi)}=1.
\]
Moreover, since we can always assume that $u_{n_k}\xrightarrow[]{a.e.} v$ and, by assumption, $\bigcap_n \Omega_n = \bar D$, we have $ v = 0 \text{ a.e. on }\Omega_{1} \setminus \bar D$.
But, in fact, 
\[
v = 0 \text{ a.e. on }\Omega_{1} \setminus D
\]
because the smooth boundary $\partial D$ of $D$ has measure zero.
It follows from \cite[Proposition 2.11]{BG} that
\[
v\in W_0^{1,2}(D)
\]
and thus
\[
\lambda_1^{-\mathcal{L}}(D)\leq Q(v)=Q_{1}(v).
\]

Now, using the lower semicontinuity of the quadratic form $Q_{1}$ with respect to the weak $W^{1,2}$-topology,
we obtain
\begin{align*}
Q_{1}(v) & \geq \lambda^{-\CL}_1(D)\\
&\geq \limsup_k \lambda^{-\CL}_1(\Omega_{n_k})\\
&\geq \liminf_k \lambda^{-\CL}_1(\Omega_{n_k})\\
&=\liminf_k Q_1(u_{n_k})\\
&\geq Q_1(v),
\end{align*}
showing that
\[
\lim_k \lambda^{-\CL}_1(\Omega_{n_k}) = \lambda^{-\CL}_1(D) >0.
\]
The desired conclusion now follows  by choosing $\Omega = \Omega_{k_{0}}$ with $k_{0}$ large enough.
\end{proof}

As a consequence of Proposition \ref{prop:CompactPWMaxPrinc} and Lemma \ref{Lem:L1ContinuityEnlarging}, on noting also that if $\lambda_{1}^{-\CL}(\inte M)=0$ then the corresponding first Dirichlet eigenfunction $u \geq 0$ violates the maximum principle, we have the validity of the following well known characterization.

\begin{corollary}
 Let $M_{\Psi} = (M,g,\dv_{\Psi})$ be a compact weighted Riemannian manifold with smooth boundary. Then, the compact maximum principle of Proposition \ref{prop:CompactPWMaxPrinc} for the Schr\"odinger operator $\CL$ holds if and only if $\lambda_{1}^{-\CL}(\inte M)>0$.
\end{corollary}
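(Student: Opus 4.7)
The plan is to establish the two implications separately, using the cited tools essentially as bookkeeping.

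\emph{Sufficiency} ($\lambda_{1}^{-\CL}(\inte M) > 0 \Rightarrow$ maximum principle). First I would invoke \cite{PV} to embed $M$ as a smooth bounded domain in a closed Riemannian manifold $(N, g^N)$, extending $q \in C^{0}$ and $\Psi \in C^{\infty}$ to all of $N$ with the same regularity. Next I would apply Lemma \ref{Lem:L1ContinuityEnlarging} with $D = \inte M$ to obtain a smooth domain $\inte M \Subset \Omega \Subset N$ such that $\lambda_{1}^{-\CL}(\Omega) > 0$ is still positive. Let $\vp$ be the first Dirichlet eigenfunction of $-\CL$ on $\Omega$, normalized to be positive; by standard elliptic theory $\vp \in C^{2}(\Omega) \cap C^{0}(\overline{\Omega})$, $\vp > 0$ on $\Omega$ and $\CL \vp = -\lambda_{1}^{-\CL}(\Omega)\vp$. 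In particular, the restriction of $\vp$ to $M \Subset \Omega$ is continuous up to the boundary, strictly positive on $M$, and satisfies $\CL \vp = -\lambda_{1}^{-\CL}(\Omega)\vp \leq 0$ on $\inte M$. The hypotheses of Proposition \ref{prop:CompactPWMaxPrinc} are therefore met, and the compact maximum principle follows.

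\emph{Necessity} (maximum principle $\Rightarrow \lambda_{1}^{-\CL}(\inte M) > 0$). I would argue by contrapositive, as suggested in the text immediately preceding the corollary. Assume $\lambda_{1}^{-\CL}(\inte M) \leq 0$, and let $u \geq 0$ be a first Dirichlet eigenfunction on $\inte M$, which exists by compactness of $M$ and elliptic regularity; thus $u \in C^{0}(M) \cap C^{2}(\inte M)$, $u > 0$ on $\inte M$, $u = 0$ on $\pM$, and $\CL u = -\lambda_{1}^{-\CL}(\inte M)\, u \geq 0$ on $\inte M$. Then $u$ verifies the hypotheses of the maximum principle ($\CL u \geq 0$ in $\inte M$, $u \leq 0$ on $\pM$) but not its conclusion, since $u > 0$ on $\inte M$. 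Hence the maximum principle cannot hold when $\lambda_{1}^{-\CL}(\inte M) \leq 0$.

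\emph{Main obstacle.} There is no real obstacle here: both directions are short once Lemma \ref{Lem:L1ContinuityEnlarging} is in hand. The only subtlety is the consistent use of the sign convention, namely that the Rayleigh quotient appearing in the definition of $\lambda_{1}^{-\CL}$ corresponds to the operator $-\CL$, so that a positive first eigenfunction $\vp$ satisfies $\CL\vp = -\lambda_{1}^{-\CL}\vp$; getting the sign right is what guarantees that $\vp$ is a supersolution in the sufficiency argument and that $u$ is a subsolution violating the maximum principle in the necessity argument.
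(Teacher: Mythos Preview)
Your proposal is correct and follows essentially the same route as the paper: the sufficiency direction is exactly the argument sketched in the paragraph preceding the corollary (embed via \cite{PV}, enlarge via Lemma~\ref{Lem:L1ContinuityEnlarging}, and restrict the first eigenfunction of the enlarged domain to produce the supersolution $\varphi$), and the necessity direction is the observation, also stated there, that the first Dirichlet eigenfunction violates the maximum principle when $\lambda_{1}^{-\CL}(\inte M)\leq 0$. The only cosmetic difference is that the paper singles out the borderline case $\lambda_{1}^{-\CL}=0$, while you treat $\lambda_{1}^{-\CL}\leq 0$ uniformly; either way the argument is the same.
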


When specified to the stability operator, the previous result takes the following form.

\begin{corollary}\label{Cor:CompactCase}
Let $M_\Psi=(M,g,\dv_{\Psi})$ be a compact weighted Riemannian manifold with smooth boundary $\partial M \not=\emptyset$. Assume that $u\in C^0(M)\cap C^2(\textnormal{int}(M))$ is a strongly stable solution of $\Delta_\Psi u=f(u)$ on $M$. If  $v\in C^0(M)\cap W^{1,2}_\textnormal{loc}(\textnormal{int}(M))$ satisfies
\begin{align*}
\left\{
\begin{array}{ll}
\Delta_\Psi v \geq f'(u) v & \inte M\\
v\leq 0 & \partial M
\end{array}
\right.
\end{align*}
then $v\leq 0$ on $M$.
\end{corollary}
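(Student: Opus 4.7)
The plan is to recognize Corollary \ref{Cor:CompactCase} as a direct specialization of the compact maximum principle (Proposition \ref{prop:CompactPWMaxPrinc}) applied to the stability (Schr\"odinger) operator $\CL = \Delta_{\Psi} - q$ with $q = f'(u) \in C^{0}(M)$. With this identification, the hypotheses on $v$ rewrite as
\[
\CL v \geq 0 \text{ in }\inte M, \qquad v \leq 0 \text{ on }\partial M,
\]
so the only missing ingredient is a positive barrier $\varphi \in C^{0}(M)\cap C^{2}(\inte M)$ with $\CL \varphi \leq 0$ in $\inte M$. Once $\varphi$ is available, Proposition \ref{prop:CompactPWMaxPrinc} yields $v \leq 0$ directly.

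To construct $\varphi$, I would first embed $M$ as a smooth bounded domain of a closed Riemannian manifold $(N, g^{N})$ via \cite[Theorem A]{PV}, and extend both $\Psi$ and $f'(u)$ to $N$ with the same regularity. The strong stability hypothesis reads $\lambda_{1}^{-\CL}(\inte M) > 0$, so Lemma \ref{Lem:L1ContinuityEnlarging} provides a smooth domain $\O$ with $M \Subset \O \Subset N$ and $\lambda_{1}^{-\CL}(\O) > 0$. Take $\varphi$ to be the first Dirichlet eigenfunction of $-\CL$ on $\O$, normalized to be nonnegative; by standard elliptic regularity $\varphi$ is smooth on $\O$, and by the strong maximum principle on the connected set $\O$ it is strictly positive on $\inte\O \supset \bar M$. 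By construction,
\[
\CL \varphi = -\lambda_{1}^{-\CL}(\O)\, \varphi \leq 0 \quad \text{on } M,
\]
so the restriction $\varphi|_{M}$ is exactly the barrier required by \eqref{eq:CompactPhi}.

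I do not foresee a substantial obstacle, since each step has already been set up in the preceding subsection; the one point that deserves explicit mention is that $\varphi$ must be positive on the \emph{closed} set $\bar M$, which is ensured by the strict inclusion $\bar M \subset \inte \O$ and the positivity of the principal eigenfunction on $\O$. After this verification, Proposition \ref{prop:CompactPWMaxPrinc} closes the argument.
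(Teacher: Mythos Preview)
Your proposal is correct and follows exactly the route the paper itself lays out: the corollary is stated without proof as the specialization of Proposition \ref{prop:CompactPWMaxPrinc} to $q=f'(u)$, and the construction of the barrier $\varphi$ via the embedding from \cite{PV} and Lemma \ref{Lem:L1ContinuityEnlarging} is precisely what the paragraph preceding Lemma \ref{Lem:L1ContinuityEnlarging} describes. Your remark that $\bar M \subset \O$ guarantees $\varphi>0$ on the closed set $M$ is the only detail worth making explicit, and you handled it correctly.
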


\subsection{Non-compact maximum principle: parabolicity}\label{Section:Parabolicity-NonCaompactMP}

Let $M_{\Psi}$ be a (connected) weighted manifold with (possibly empty) boundary $\partial M$ and outward pointing unit normal $\vec \nu$. Say that $M_{\Psi}$ is \textit{Neumann-parabolic} ($\CN$-parabolic for short) if, for any given $v \in C^{0}(M) \cap W^{1,2}_{loc}(\inte M,  \dv_\Psi)$ satisfying
\[
\begin{cases}
 \Delta_{\Psi} v \geq 0 & \inte M\\
 \partial_{\vec \nu} v \leq 0 & \partial M\\
 \sup_{M} v < +\infty
\end{cases}
\]
it holds
\[
v \equiv const.
\]
Obviously, in case $\partial M = \emptyset$, the normal derivative condition is void.

As the definition shows, parabolicity is a kind of compactness from the viewpoint of the (weighted) Laplacian. This is also visible in the next theorem. Further instances will be presented in Section \ref{section-tools-potential}.

\begin{theorem}[Ahlfors maximum principle, \cite{IPS, ILPS}]\label{Th:AhlforsMP}
 If $M_{\Psi}$ is a $\CN$-parabolic weighted manifold with $\partial M \not = \emptyset$, then for any $v \in C^{0}(M) \cap W^{1,2}_{loc}(\inte M)$ satisfying
 \[
\begin{cases}
 \Delta_{\Psi} v \geq 0 & \inte M\\
 \sup_{M} v < +\infty
\end{cases}
\]
it holds
\[
\sup_{M} v = \sup_{\partial M} v.
\]
\end{theorem}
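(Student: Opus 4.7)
Set $S := \sup_{\partial M} v$. By hypothesis $v$ is bounded above and $\partial M \neq \emptyset$, so $S \in \rr$; since $\sup_M v \geq \sup_{\partial M} v$ is automatic, it suffices to show $v \leq S$ on $M$, equivalently that $u := (v-S)_+ \equiv 0$. Standard Sobolev calculus (the Kato-type inequality for positive parts of weakly subharmonic functions) gives $u \in C^0(M) \cap W^{1,2}_{loc}(\inte M)$ non-negative, bounded by $\sup_M v - S$, vanishing on $\partial M$, and satisfying $\Delta_\Psi u \geq 0$ weakly in $\inte M$.

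To conclude $u \equiv 0$, I would invoke the equivalent cutoff-function characterization of $\CN$-parabolicity (standard in Riemannian potential theory, as in \cite{IPS,ILPS} cited in the statement): there exist $\{\eta_n\} \subset \mathrm{Lip}_c(M)$ with $0 \leq \eta_n \leq 1$, $\eta_n \nearrow 1$ pointwise, and $\int_M |\nabla \eta_n|^2 \, \dv_\Psi \to 0$. Because $u$ vanishes on $\partial M$, the product $\eta_n^2 u$ has zero trace on $\partial M$ and compact support in $M$, so it belongs to $W^{1,2}_0(\inte M)$ and is a legitimate non-negative test function for the interior inequality $\Delta_\Psi u \geq 0$.

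Testing this weak inequality against $\eta_n^2 u$, integrating by parts, and applying Cauchy--Schwarz together with Young's inequality to absorb the full gradient term, one arrives at
\[
\int_M \eta_n^2 |\nabla u|^2 \, \dv_\Psi \leq 4 \|u\|_\infty^2 \int_M |\nabla \eta_n|^2 \, \dv_\Psi \xrightarrow{n \to \infty} 0.
\]
Fatou's lemma then yields $\nabla u = 0$ almost everywhere, so $u$ is constant on the connected manifold $M$; combined with $u|_{\partial M} = 0$, this forces $u \equiv 0$, i.e.\ $v \leq S$ on $M$. The main obstacle is precisely the cutoff characterization of $\CN$-parabolicity, which is not derived in the excerpt; a more self-contained alternative applies the definition of $\CN$-parabolicity directly to the smoothed positive part $u_\epsilon := G_\epsilon(v-S)$, where $G_\epsilon \in C^\infty(\rr)$ is convex non-decreasing vanishing on $(-\infty, \epsilon]$. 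A single integration by parts verifies that $u_\epsilon$ is a bounded weakly subharmonic function with $\partial_{\vec\nu} u_\epsilon \leq 0$ on $\partial M$ in the distributional sense (since for any $\varphi \in C^\infty_c(M)$, $\varphi \geq 0$, the function $G_\epsilon'(v-S)\varphi$ is compactly supported in $\inte M$ by continuity of $v$, and hence tests the interior subharmonicity of $v$); $\CN$-parabolicity then forces $u_\epsilon \equiv 0$, and letting $\epsilon \to 0$ concludes.
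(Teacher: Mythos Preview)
The paper does not supply its own proof of this theorem: it is quoted from the references \cite{IPS, ILPS} and then used as a black box in Proposition~\ref{prop:NonCompactPWMaxPrinc}. So there is no ``paper's proof'' to compare against, only the original sources.

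That said, your proposal is sound and is precisely in the spirit of those references. A couple of remarks:

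\begin{itemize}
\item In your first route, the capacitary (null-sequence) characterization of $\CN$-parabolicity you invoke is exactly what the paper alludes to a few lines later, in the proof of Proposition~\ref{prop:NonCompactPWMaxPrinc} (``the capacitary characterization of parabolicity as explained in \cite{IPS, ILPS}''). Granted that characterization, your Caccioppoli estimate and the conclusion $\nabla u = 0$ a.e.\ are correct.
\item Your second, more self-contained route is the cleaner one and avoids importing the equivalence. The key observation---that $G_\epsilon'(v-S)\varphi$ is compactly supported in $\inte M$ for any $\varphi \in C_c^\infty(M)$ because the closed set $\{v-S \geq \epsilon\}$ is disjoint from $\partial M$---is exactly right, and it simultaneously yields weak subharmonicity of $u_\epsilon$ in $\inte M$ and the (trivial) Neumann condition $\partial_{\vec\nu} u_\epsilon = 0$ on $\partial M$. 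One small point to make explicit: choose $G_\epsilon$ with globally bounded derivative (e.g.\ a smoothing of $t \mapsto (t-\epsilon)_+$) so that the chain rule for Sobolev functions applies and $u_\epsilon \in W^{1,2}_{loc}(\inte M)$, since $v$ is only assumed bounded above. With this choice the definition of $\CN$-parabolicity applies verbatim and forces $u_\epsilon \equiv 0$; letting $\epsilon \downarrow 0$ gives $v \leq S$.
\end{itemize}
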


Using Theorem \ref{Th:AhlforsMP}, the proof of Proposition \ref{prop:CompactPWMaxPrinc} extends to the context of non-compact parabolic Riemannian manifolds: in addition, we only have to require suitable bounds on the functions $u$ and $\varphi$:

\begin{proposition}(Non-Compact Maximum Principle)\label{prop:NonCompactPWMaxPrinc}
Let $M_\Psi=(M,g, \dv_\Psi)$ be a $\CN$-parabolic weighted Riemannian manifold with boundary $\partial M \not= \emptyset  $ and set $\CL=\Delta_\Psi-q$ with $q\in C^0(M)$. Assume that there exists $\varphi \in C^2(M)$ satisfying
\begin{align}\label{eq:NonCompactPhi}
\left\{
\begin{array}{ll}
\mathcal{L}\varphi \leq 0 & \inte M\\
\frac{1}{C}\leq \varphi \leq C & M
\end{array}
\right.
\end{align}
for some constant $C\geq 1$. Then, any solution $u\in C^0(M)\cap W^{1,2}_\textnormal{loc}(\inte M)$ of
\begin{align*}
\left\{
\begin{array}{ll}
\mathcal{L}u\geq 0 & \inte M\\
u\leq 0 & \partial M\\
\sup_M u<+\infty
\end{array}
\right.
\end{align*}
satisfies $u\leq 0$ in $M$.
\end{proposition}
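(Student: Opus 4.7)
The strategy is to mimic the compact argument of Proposition \ref{prop:CompactPWMaxPrinc}, substituting the classical compact maximum principle with the Ahlfors-type statement of Theorem \ref{Th:AhlforsMP}. The key device remains the reduction $u \mapsto u_+/\varphi$, performed this time on a weighted manifold with a boundedly perturbed measure so that bounded subsolutions become subharmonic for a genuine $\Delta_\Phi$.

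First I would truncate, setting $u_+ = \max\{u,0\}$. Exactly as in the compact proof (via the nonlinear cutoff argument \cite[Lemma 6.1]{PS}), $u_+ \in C^0(M)\cap W^{1,2}_{loc}(\inte M)$ is a non-negative weak subsolution $\CL u_+ \ge 0$ in $\inte M$, with $u_+ = 0$ on $\partial M$ and $\sup_M u_+ \le \max\{\sup_M u, 0\} < +\infty$. Then I introduce the ratio $\omega := u_+/\varphi$, which by the two-sided bound $1/C \le \varphi \le C$ inherits the same regularity, is non-negative, bounded (by $C\sup_M u_+$), and vanishes on $\partial M$. The natural auxiliary weight is $\Phi := \Psi - 2\log\varphi$, so that $\dv_\Phi = \varphi^2\,\dv_\Psi$. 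The standard product-rule computation used in the compact case yields the pointwise identity
$$
\varphi\,\Delta_\Phi \omega \;=\; \CL u_+ \;-\; \omega\,\CL\varphi \quad \text{on } \inte M,
$$
and the right-hand side is non-negative since $\CL u_+ \ge 0$, $\omega \ge 0$, $\CL\varphi \le 0$; thus $\Delta_\Phi \omega \ge 0$ weakly on $\inte M$.

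Next I would transfer parabolicity from $M_\Psi$ to $M_\Phi$. Since $|\Phi-\Psi| = 2|\log\varphi| \le 2\log C$, the measures $\dv_\Phi$ and $\dv_\Psi$ are boundedly comparable and the Dirichlet energies $\int|\nabla\,\cdot\,|^2 \dv_\Psi$ and $\int|\nabla\,\cdot\,|^2 \dv_\Phi$ are equivalent, so the two weighted capacities of any compact set vanish simultaneously. Hence $\CN$-parabolicity, being captured by the vanishing of such capacities (adapted to the Neumann boundary condition), passes from $M_\Psi$ to $M_\Phi$. At this point the bounded non-negative function $\omega$, satisfying $\Delta_\Phi \omega \ge 0$ in $\inte M$ and $\omega = 0$ on $\partial M$, is a legitimate input to Theorem \ref{Th:AhlforsMP} on $M_\Phi$, giving
$$
\sup_M \omega \;=\; \sup_{\partial M} \omega \;=\; 0.
$$
Combined with $\omega \ge 0$ this forces $\omega \equiv 0$, hence $u_+ \equiv 0$ and $u \le 0$ on $M$.

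The main technical obstacle is the parabolicity transfer step. The pointwise identity relating $\Delta_\Phi$ and $\Delta_\Psi$ differs by the first-order term $2\langle\nabla\log\varphi,\nabla\,\cdot\,\rangle$, which is not obviously benign; the cleanest route is to avoid manipulating the operator directly and instead isolate a short self-contained lemma stating that, on a weighted manifold with boundary, $\CN$-parabolicity depends only on the equivalence class of the measure $e^{-\Psi}\dv$. Everything else is routine bookkeeping; in particular, the computation of $\varphi\,\Delta_\Phi\omega$ is the same (weighted) product-rule calculation that underlies the compact proof, and the appeal to Ahlfors replaces the standard Hopf-type maximum principle in a single line.
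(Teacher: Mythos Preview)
Your proposal is correct and follows essentially the same approach as the paper: define $\Phi=\Psi-2\log\varphi$, observe that the two-sided bound on $\varphi$ transfers $\CN$-parabolicity from $M_\Psi$ to $M_\Phi$ via the capacitary characterization, and then rerun the compact argument with the Ahlfors Maximum Principle of Theorem~\ref{Th:AhlforsMP} in place of the classical one. The paper's proof is terser but identical in structure, and even flags the same capacitary step as the point requiring justification.
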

\begin{proof} Note that, thanks to the bounds on $\vp$, defining $\Phi=\log(\varphi^{-2})+\Psi$ as in the compact case, the weighted manifold $M_\Phi$ inherits the  $\CN$-parabolicity of $M_{\Psi}$. For instance, this can be seen by using the capacitary characterization of parabolicity as explained in \cite{IPS, ILPS}. Therefore, the proof of Proposition \ref{prop:CompactPWMaxPrinc} can  be carried out verbatim up to replacing the  classical maximum principle for the operator $\Delta_{\Phi}$ with the corresponding Ahlfors Maximum Principle of Theorem \ref{Th:AhlforsMP}.  
\end{proof}

\subsection{Uniqueness} 

It is well known that, for convex or concave nonlinearities, stable solutions of the corresponding semilinear equations on compact domains are (essentially) unique. More precisely, we recall the following result from \cite[Proposition 1.3.1]{Du}.

\begin{theorem}\label{th-uniqueness-Du}
 Let $M_\Psi=(M,g, \dv_\Psi)$ be a compact weighted Riemannian manifold with boundary components $(\partial M)_{j} \not=\emptyset$, $j=1,2$. Let $f:\rr \to \rr$ be a $C^{2}$ function satisfying either $f''(t) \leq 0$ or $f''(t) \geq 0$. Then, the boundary value problem
 \begin{align}\label{Eq:u}
\left\{ \begin{array}{ll}
\Delta_\Psi u=f(u)  & \inte M \\
u = c_{j}  \in \rr& (\partial M)_{j}
\end{array}
\right.
\end{align}
has at most one $C^2(M)$-stable solution unless $f(t)=-\lambda_1 t+c$, with $\lambda_1=\lambda_1^{-\Delta_\Psi}(M)>0$ the first Dirichlet eigenvalue. In this case, if $u_1$ and $u_2$ are two solutions, then $u_1-u_2=\alpha \varphi_1$, where $\alpha\in \mathbb{R}$ and $\varphi_1$ is a first Dirichlet eigenfunction of $-\Delta_\Psi$ on $M$. 
\end{theorem}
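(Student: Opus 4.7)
I plan to follow the classical convexity argument, adapted to the weighted compact setting. Let $u_1, u_2 \in C^2(M)$ be two stable solutions of \eqref{Eq:u}, set $w = u_2 - u_1$, and note that $w \in C^2(M)\cap W^{1,2}_0(\inte M)$ satisfies
\[
\Delta_\Psi w = f(u_2) - f(u_1) \quad \text{in } \inte M, \qquad w\big|_{\partial M} = 0 .
\]
In the convex case $f'' \geq 0$, the chord-tangent inequalities give
\[
f'(u_1)\, w \ \leq \ f(u_2) - f(u_1) \ \leq \ f'(u_2)\, w
\]
pointwise. Testing the PDE above against $w_+ = \max(w,0)$ and $w_- = \max(-w,0)$ (which belong to $W^{1,2}_0(\inte M)$ because $w$ vanishes on $\partial M$) via $\Psi$-integration by parts, and using the lower and upper chord-tangent bounds respectively, yields
\[
\int_M \left(|\nabla w_+|^2 + f'(u_1) w_+^2\right)\dv_\Psi \ \leq \ 0 \quad \text{and} \quad \int_M \left(|\nabla w_-|^2 + f'(u_2) w_-^2\right)\dv_\Psi \ \leq \ 0 .
\]
By the stability of $u_1$ and $u_2$, extended from $C^\infty_c(\inte M)$ to $W^{1,2}_0(\inte M)$ by density, the reverse inequalities hold and therefore both quantities vanish.

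Each vanishing Rayleigh quotient forces either $w_\pm \equiv 0$, or $w_\pm$ to be a nontrivial nonnegative minimizer and hence, by elliptic regularity, a first Dirichlet eigenfunction of $-\CL_j := -\Delta_\Psi + f'(u_j)$ with eigenvalue $0$; the strong maximum principle then gives $w_\pm > 0$ in $\inte M$. Since $w_+ w_- \equiv 0$, we conclude that $w$ has constant sign on $\inte M$. If $w \equiv 0$, uniqueness follows; otherwise, WLOG $w > 0$, and pointwise equality must hold in the lower chord-tangent bound. With $f \in C^2$ and $f'' \geq 0$, this forces $f$ to be affine on the range of the $u_j$'s, say $f(t) = At + B$. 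Plugging back, $\Delta_\Psi w = A w$ with $w > 0$ in $\inte M$ and $w\big|_{\partial M} = 0$ identifies $w$ as a positive first Dirichlet eigenfunction of $-\Delta_\Psi$, so $A = -\lambda_1$ and $w = \alpha \varphi_1$ for some $\alpha \in \rr$, as claimed. The concave case $f'' \leq 0$ runs along identical lines, with the chord-tangent bounds reversed and the roles of the stabilities of $u_1$ and $u_2$ swapped when closing the two Rayleigh-quotient estimates.

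The step I expect to be the main obstacle is the equality analysis, namely passing from pointwise equality in the chord-tangent bound on $\{w > 0\}\subseteq \inte M$ to the affine form of $f$ on the image of the $u_j$'s, and then identifying the resulting slope with $-\lambda_1^{-\Delta_\Psi}(M)$. The remaining ingredients are standard: the variational characterization of $\lambda_1^{-\CL}(\inte M)\geq 0$, $\Psi$-integration by parts (made legitimate by $w|_{\partial M} = 0$), and the positivity and uniqueness of the first Dirichlet eigenfunction.
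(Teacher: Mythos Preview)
Your proposal is correct and follows the standard convexity/stability argument. Note that the paper does not actually give its own proof of this theorem---it is recalled verbatim from \cite[Proposition 1.3.1]{Du}. The closest proof in the paper is that of Theorem~\ref{Th:NoneEuclideanDupaigne} (the non-compact extension), and your argument matches that one: integrate the equation for $w=u_2-u_1$ against $w_\pm$, combine with the stability inequality, and use the chord--tangent bound to force the integrand $(f(u_2)-f(u_1)-f'(u_j)w_\pm)w_\pm$ to vanish. The paper treats only $w_+$ and then swaps $u_1\leftrightarrow u_2$, while you run $w_+$ and $w_-$ in parallel; these are equivalent.

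Your equality analysis is more thorough than what appears in the paper (which, in the proof of Theorem~\ref{Th:NoneEuclideanDupaigne}, simply stops at ``either $w_+\equiv 0$ or $f$ is affine''). The step you flag is indeed the only delicate one, and it goes through: once $w_+$ is a nontrivial first eigenfunction you get $w>0$ on $\inte M$, so equality $f(u_2(x))-f(u_1(x))=f'(u_1(x))w(x)$ holds on the connected set $\inte M$; since $f''\ge 0$, this forces $f''\equiv 0$ on each $[u_1(x),u_2(x)]$, and connectedness propagates this to an interval containing both ranges. Then $\Delta_\Psi w=Aw$, $w>0$ in $\inte M$, $w|_{\partial M}=0$ identifies $A=-\lambda_1^{-\Delta_\Psi}(M)$ and $w=\alpha\varphi_1$.
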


We are going to show how the proof of this uniqueness property extends to complete manifolds under a global Sobolev regularity condition. To this end, we first adapt  to complete manifolds with boundary the classical global Stokes theorem by Gaffney,  \cite{Ga}.

\begin{theorem}[Gaffney with boundary]\label{Th:Gaffney}
Let $M_\Psi=(M,g, \dv_\Psi)$ be a complete weighted Riemannian manifold with (possibly empty) boundary $\partial M$. Let $X$ be a vector field on $M$ such that:
\[
i)\, |X| \in L^{1}(M, \dv_\Psi),\,\, ii)\, \div_{\Psi} (X)\in L^1(M, \dv_\Psi),\,\, iii)\, g(X,\vec \nu)\in L^1(\partial M, \dv_\Psi),
\]
where $\vec \nu$ is the outward-pointing unit normal to $\partial M$. Then
\begin{align*}
\int_M \textnormal{div}_\Psi(X)\  \dv_\Psi=\int_{\pM} g(X,\vec \nu)\ \textnormal{da}_\Psi.
\end{align*} 
\end{theorem}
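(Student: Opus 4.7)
The plan is to mimic the original cutoff argument of Gaffney, the essential twist being that the cutoff functions will be compactly supported \emph{in $M$} but will \emph{not} vanish on $\partial M$: they cut off only ``at infinity'', so all boundary contributions are preserved in the limit. Having reduced the global identity to a family of approximating identities on compact sub-domains, the proof then relies on two $L^{1}$-limit arguments, one on $M$ and one on $\partial M$, both directly fed by the three integrability hypotheses.

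First I would invoke completeness to fix a smooth proper function $\rho \in C^{\infty}(M)$ with $|\nabla \rho|\leq 1$ (obtained for instance by a standard mollification of the Lipschitz distance $d(\cdot, o)$ from a fixed basepoint, as in Greene--Wu; the presence of the boundary causes no problem provided we keep $\partial M$ fixed under the mollification). By Sard's theorem we can choose, for each $k \in \nn$, a regular value $t_{k} \in (2k, 3k)$ of $\rho$ and of $\rho|_{\partial M}$ simultaneously, so that $M_{t_{k}} = \{\rho \leq t_{k}\}$ is a compact manifold with corners whose boundary decomposes as the smooth hypersurface $\Sigma_{t_{k}}=\{\rho=t_{k}\}$ together with $\partial M \cap M_{t_{k}}$, meeting transversally along a set of codimension two (and $dv_{\Psi}$- and $da_{\Psi}$-measure zero). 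Let $\psi_{k}\in C^{\infty}(\rr)$ satisfy $\psi_{k}\equiv 1$ on $(-\infty,k]$, $\psi_{k}\equiv 0$ on $[2k,+\infty)$, $0\leq \psi_{k}\leq 1$, $|\psi_{k}'|\leq 2/k$, and set $\varphi_{k} = \psi_{k}(\rho)\in C^{\infty}(M)$. Then $0\leq \varphi_{k}\leq 1$, $\varphi_{k}\to 1$ pointwise, $|\nabla \varphi_{k}|\leq 2/k$, and $\supp(\varphi_{k})\subset M_{2k}\subset \inte M_{t_{k}}$.

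Apply the classical divergence theorem (valid on compact manifolds with corners) to the smooth, compactly supported vector field $\varphi_{k} X$ on $M_{t_{k}}$. Since $\varphi_{k}$ vanishes identically on $\Sigma_{t_{k}}$, the contribution from $\Sigma_{t_{k}}$ is zero, so
\[
\int_{M}\div_{\Psi}(\varphi_{k} X)\, \dv_{\Psi} \;=\; \int_{\partial M}\varphi_{k}\, g(X,\vec\nu)\,\da_{\Psi}.
\]
Expanding $\div_{\Psi}(\varphi_{k} X) = \varphi_{k}\div_{\Psi}(X) + g(\nabla \varphi_{k},X)$ and letting $k\to\infty$: by dominated convergence with dominating function $|\div_{\Psi} X|\in L^{1}(M,\dv_{\Psi})$ we get $\varphi_{k}\div_{\Psi}(X)\to \div_{\Psi}(X)$ in $L^{1}$; the cross-term is controlled by $|g(\nabla \varphi_{k}, X)|\leq (2/k)|X|$ with $|X|\in L^{1}$, hence vanishes in $L^{1}$; and on $\partial M$, dominated convergence with dominating function $|g(X,\vec\nu)|\in L^{1}(\partial M,\da_{\Psi})$ gives $\varphi_{k}g(X,\vec\nu)\to g(X,\vec\nu)$ in $L^{1}(\partial M,\da_{\Psi})$. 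Combining these limits yields the claimed identity.

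The main obstacle is the smooth-exhaustion step on a complete weighted manifold \emph{with} boundary, where the Greene--Wu construction is usually stated only in the boundaryless case. I expect this to be the sole delicate point; once one has a proper $C^{\infty}$ (or even $C^{1}$-Lipschitz) function with $|\nabla\rho|\leq 1$, the remainder of the argument is a routine cutoff-plus-dominated-convergence computation. A cheap alternative, should the smooth construction prove awkward, is to work directly with $\rho(x)=d(x,o)$ and approximate $\varphi_{k}$ by Lipschitz cutoffs, invoking Rademacher's theorem and the coarea formula to make sense of the boundary integrals on the regular level sets $\{\rho=t_{k}\}$; the two arguments lead to the same identity.
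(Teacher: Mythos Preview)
Your proposal is correct and follows essentially the same cutoff-plus-dominated-convergence strategy as the paper. The paper streamlines your argument in two ways: it cites \cite{PV} directly for the existence of the cutoffs $\{\rho_k\}$ on complete manifolds \emph{with} boundary (resolving precisely what you flag as the ``main obstacle''), and it bypasses the Sard/manifold-with-corners discussion entirely by applying the divergence theorem directly to the compactly supported field $\rho_k X$ on all of $M$.
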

\begin{proof}
It is a consequence of the Riemannian extension property of complete manifolds that, even for manifolds with boundary, the completeness of $M$ implies the existence of a sequence of cutoff functions $\{\rho_k\}_k\subset C^\infty_c(M)$ satisfying
\begin{align}\label{cutoffs}
\left\{
\begin{array}{l}
0\leq \rho_k \leq 1\smallskip \\
||\nabla \rho_k||_{L^\infty(M, \dv)}\to 0 \smallskip\\
\rho_k \nearrow 1.
\end{array}\right.
\end{align}
See \cite[Page 16]{PV}. Since the vector field $\rho_k X$ is compactly supported, by the classical (weak) divergence theorem we have
\begin{align*}
\int_M \div_\Psi(\rho_k X)\  \dv_\Psi&=\int_{\pM} g(\rho_k X, \vec{\nu})\ \textnormal{da}_\Psi.
\end{align*}
On the other hand,
\begin{align*}
\int_M \div_{\Psi}(\rho_k X)\  \dv_\Psi=\int_M g(\nabla \rho_k, X)\  \dv_\Psi+\int_M \rho_k \div_{\Psi}(X)\  \dv_\Psi.
\end{align*}
Whence, we obtain
\begin{align}\label{Eq:Gaffney1}
\int_{\pM} g(\rho_k X, \vec \nu)\ \textnormal{da}_\Psi=\int_M g(\nabla \rho_k, X)\  \dv_\Psi+\int_M \rho_k \div_{\Psi}(X)\  \dv_\Psi.
\end{align}
To conclude the validity of \eqref{Eq:Gaffney1} we take the limit as $k\to+\infty$ once we have noted that, by dominated convergence,
\begin{align*}
\int_M \rho_k \div_{\Psi} (X)\  \dv_\Psi\to \int_M \div_\Psi (X)\  \dv_\Psi
\end{align*}
and
\begin{align*}
\int_{\pM} g(\rho_k X, \vec \nu)\ \textnormal{da}_\Psi\to \int_{\pM} g(X, \vec\nu)\ \textnormal{da}_\Psi
\end{align*}
while
\begin{align*}
\Bigg|\int_M g(\nabla \rho_k,X)\  \dv_\Psi \Bigg|\leq ||\nabla \rho_k||_{L^\infty(M, \dv)}||X||_{L^1(M, \dv_\Psi)}\to 0.
\end{align*}
\end{proof}

Using this global divergence theorem, we can now extend to complete manifolds the uniqueness result of Theorem \ref{th-uniqueness-Du}.

\begin{theorem}\label{Th:NoneEuclideanDupaigne}
Let $M_\Psi=(M,g, \dv_\Psi)$ be a complete weighted Riemannian manifold with boundary $\partial M \not= \emptyset$, and $u_1,u_2\in C^0(M)\cap W^{1,2}(\inte M, \dv_\Psi)\cap L^\infty (M)$ be stable solutions of \eqref{Eq:u} with $f\in C^1$ concave (or convex). Then $u_1=u_2$ unless $f(t)=At+B$ for some $A,B\in \mathbb{R}$.
\end{theorem}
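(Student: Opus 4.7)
The plan is to adapt Dupaigne's compact argument (Theorem \ref{th-uniqueness-Du}) to the complete non-compact setting, replacing the classical integration by parts by the Gaffney-type divergence identity of Theorem \ref{Th:Gaffney} and using the completeness cutoffs \eqref{cutoffs} to extend the stability inequality from $C^\infty_c(\inte M)$ to the test functions that arise in the proof. Without loss of generality $f$ may be assumed concave (the convex case follows by replacing $u_i$ with $-u_i$ and $f(t)$ with $-f(-t)$). Set $w:=u_1-u_2\in C^0(M)\cap W^{1,2}(\inte M,\dv_\Psi)\cap L^\infty(M)$; then $w|_{\partial M}=0$ and $w$ solves weakly $\Delta_\Psi w=f(u_1)-f(u_2)$. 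Since $f\in C^1$ and $u_i\in L^\infty$, there is a pointwise bound $|f(u_1)-f(u_2)|\leq L|w|$ for some $L>0$, which guarantees all the $L^1$-integrability used below.

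The core step is a pair of matched estimates for the positive and negative parts $w^\pm$ of $w$. Applying Theorem \ref{Th:Gaffney} to the vector field $X=w^+\nabla w$ (so that $|X|\in L^1$ by Cauchy-Schwarz, $\div_\Psi X=w^+(f(u_1)-f(u_2))+|\nabla w^+|^2\in L^1$ by the Lipschitz bound, and $g(X,\vec\nu)\equiv 0$ on $\partial M$ since $w^+$ vanishes there) yields
\[
\int_M |\nabla w^+|^2\,\dv_\Psi\;=\;-\int_M w^+\bigl(f(u_1)-f(u_2)\bigr)\,\dv_\Psi.
\]
The concavity tangent-line inequality $f(u_2)\leq f(u_1)-f'(u_1)w$ implies $w^+(f(u_1)-f(u_2))\geq f'(u_1)(w^+)^2$ pointwise, so
\[
\int_M\bigl(|\nabla w^+|^2+f'(u_1)(w^+)^2\bigr)\,\dv_\Psi\;\leq\;0.
\]
For the matching lower bound, test the stability of $u_1$ against $\rho_k w^+$: using $w^+\in W^{1,2}\cap L^\infty$, $\|\nabla\rho_k\|_\infty\to 0$, and $\rho_k\nearrow 1$, dominated convergence in each term gives $\int_M(|\nabla w^+|^2+f'(u_1)(w^+)^2)\,\dv_\Psi\geq 0$, and hence equality. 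The same scheme applied to $w^-$, using the stability of $u_2$ and the tangent-line inequality of $f$ at $u_2$, produces the analogous identity
\[
\int_M\bigl(|\nabla w^-|^2+f'(u_2)(w^-)^2\bigr)\,\dv_\Psi\;=\;0.
\]

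Two conclusions follow. First, the concavity gap $f(u_1)-f(u_2)-f'(u_1)w$ (and its counterpart at $u_2$) vanishes almost everywhere on $\{w\neq 0\}$, forcing $f$ to be affine on the segment $[\min(u_1(x),u_2(x)),\max(u_1(x),u_2(x))]$ for every $x$ with $w(x)\neq 0$. Second, each $w^\pm$ realizes the value $0$ of the associated nonnegative stability quadratic form and is therefore a weak solution of the linear eigenproblem $\Delta_\Psi\phi=f'(u_i)\phi$ on $\inte M$; the strong maximum principle then yields the dichotomy $w^+\equiv 0$ or $w^+>0$ on $\inte M$, and the analogous one for $w^-$. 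If both vanish identically, $u_1\equiv u_2$ as claimed. Otherwise, the continuity of $u_1,u_2$ and the connectedness of $M$ promote the local affineness of $f$ to affineness on the whole interval spanned by the joint range of the $u_i$, and combined with the global concavity assumption this identifies $f$ with an affine function $t\mapsto At+B$. The main technical obstacle is the rigorous handling of test functions and of the integration by parts in the non-compact, complete setting; this is resolved precisely by Theorem \ref{Th:Gaffney} (whose integrability hypotheses are met thanks to $u_i\in W^{1,2}\cap L^\infty$) and by the completeness cutoffs \eqref{cutoffs}.
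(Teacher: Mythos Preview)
Your approach is essentially the paper's: apply Gaffney (Theorem~\ref{Th:Gaffney}) to $w^{+}\nabla w$ to obtain the energy identity, extend the stability inequality to the test function $w^{+}$ via the cutoffs $\rho_k$, and combine with the concavity tangent-line inequality. The only organizational difference is that you treat $w^{+}$ and $w^{-}$ in parallel (using stability of $u_1$ and of $u_2$ respectively), whereas the paper works with $(u_2-u_1)^{+}$ alone and then ``reverses the role of $u_1$ and $u_2$'' at the end.

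Your concluding detour through the strong maximum principle is correct but unnecessary: once the pointwise identity $\bigl(f(u_1)-f(u_2)-f'(u_1)w\bigr)w^{+}=0$ is in hand, the paper simply notes that if $f$ is \emph{strictly} concave the first factor is strictly positive wherever $u_1\neq u_2$, forcing $w^{+}\equiv 0$ directly, without passing through the eigenfunction interpretation. More importantly, your final sentence contains a genuine slip: global concavity together with affineness on the interval spanned by the joint range does \emph{not} force $f$ to be affine on all of $\rr$ (e.g.\ $f(t)=-\max(t,0)^{2}$ is $C^{1}$, concave, affine on $(-\infty,0]$, but not globally affine). What the argument actually delivers is affineness of $f$ on that range only. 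The paper's proof is equally terse at this point---its dichotomy ``either $f$ is strictly concave, or $f$ is affine'' is not exhaustive for concave $C^{1}$ functions---so this is best read as an imprecision in the formulation of the theorem rather than a defect peculiar to your proof.
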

\begin{proof}
Observe that $\omega=u_2-u_1$ solves
\begin{align}\label{Eq:w}
\left\{
\begin{array}{ll}
\Delta_\Psi \omega =f(u_2)-f(u_1) & \textnormal{in int}M\\
\omega=0 & \textnormal{on}\ \partial M,
\end{array}
\right.
\end{align}
Let $\omega_+ =\max(\omega,0)\in W^{1,2}(\inte M)\cap C^{0}(M)$. Using a standard approximation argument that relies on the completeness of $M$, we easily see that
\[
\omega_+ \in W^{1,2}_0(\inte M).
\]
Indeed, let $\{\rho_k\}_k\subset C^\infty_c(M)$ be the sequence of cutoff functions introduced in Theorem \ref{Th:Gaffney} and consider the corresponding sequence $\{\varphi_k=\rho_k \omega_+\}_k \subset W^{1,2}_0 (\inte M)$. Since, by dominated convergence, $\varphi_{k} \overset{L^{2}}{\longrightarrow} \omega_+$ and, moreover,
\begin{align*}
\int_M |\nabla &(\varphi_k - \omega_+)|^2\ \dv_\Psi \\
& \leq  \underbrace{2 \int_M |\omega_+|^2 |\nabla \rho_k|^2 \dv_\Psi}_{\xrightarrow{DCT}0} + \underbrace{ 2\int_M (1-\rho_k)^2 |\nabla \omega_+|^2\ \dv_\Psi}_{\xrightarrow{MCT}0} \longrightarrow 0
\end{align*}
we have $\varphi_k \xrightarrow{W^{1,2}} \omega_+ $. The claimed property thus follows form the fact that  $W^{1,2}_0(\inte M)$ is a closed subspace of $W^{1,2}(\inte M)$.

Now consider the vector field $X = \omega_+ \nabla \omega_+$. By the very definition, $X$ and $\div_\Psi (X)$ are $L^1$-functions and $X$ vanishes on the boundary $\partial M$. Thus, we can apply Theorem \ref{Th:Gaffney} obtaining
\begin{align}\label{Eq:w+}
\int_M{|\nabla \omega_+|^2\ \dv_\Psi}=-\int_M \Big( f(u_2)-f(u_1)\Big) \omega_+\  \dv_\Psi.
\end{align}
On the other hand, since $u_2$ is a stable solution, using $\vp_{k} = \rho_{k}\omega_{+}\in W_0^{1,2}(M,\dv_\Psi)$ as test functions in the stability condition, we obtain
\begin{align*}
\int_M|\nabla\vp_{k}|^2\  \dv_\Psi \geq -\int_M f'(u_2) \vp_k^2\  \dv_\Psi
\end{align*}
where
\begin{align*}
\int_M |\nabla \vp_k|^2\  \dv_\Psi =& \underbrace{\int_M \rho_k^2 |\nabla \omega_+|^2\  \dv_\Psi}_{\xrightarrow[]{MCT}\int_M |\nabla \omega_+|^2\  \dv_\Psi} +  \underbrace{\int_M \omega_+^2 |\nabla \rho_k|^2\  \dv_\Psi}_{\xrightarrow[]{DCT}0} \\
\ \\
&+  \underbrace{2 \int_M \rho_k \omega_+ <\nabla\rho_k, \nabla \omega_+>\  \dv_\Psi}_{=c_k}
\end{align*}
and
\begin{align*}
|c_k|\leq 2 \underbrace{\Bigg(\int_M \rho_k^2 |\nabla \omega_+|^2 |\nabla\rho_k|^2\  \dv_\Psi\Bigg)^\frac{1}{2}}_{\xrightarrow[]{DCT}0} \Bigg( \int_M \omega_+^2\  \dv_\Psi\Bigg)^\frac{1}{2}.
\end{align*}
Thus
\begin{align*}
\int_M |\nabla \vp_k|^2\  \dv_\Psi\to \int_M |\nabla\omega_+|^2\  \dv_\Psi.
\end{align*}
Moreover
\begin{align*}
-\int_M f'(u_2) \vp_k^2\  \dv_\Psi = -\int_M f'(u_2) \rho_k^2 \omega_+^2\  \dv_\Psi \xrightarrow[]{DCT}-\int_M f'(u_2) \omega_+^2\  \dv_\Psi.
\end{align*}
It follows that
\begin{align*}
\int_M |\nabla \omega_+|^2\  \dv_\Psi \geq -\int_M f'(u_2)\omega_+^2\  \dv_\Psi
\end{align*}
and this latter, together with \eqref{Eq:w+}, implies
\begin{align*}
-\int_M f'(u_2) \omega_+^2\  \dv_\Psi \leq -\int_M \Big(f(u_2)-f(u_1) \Big)\omega_+\  \dv_\Psi
\end{align*}
i.e.
\begin{align*}
\int_M \Big(f(u_2)-f(u_1)-f'(u_2)\omega_+\Big)\omega_+\  \dv_\Psi\leq 0.
\end{align*}
Since, by concavity, the above integrand is non-negative we deduce that
\[
\Big(f(u_2)-f(u_1)-f'(u_2)\omega_+\Big)\omega_+ = 0
\]
and two possibilities can occur: either $f(t)$ is strictly concave and, hence, $w_{+} \equiv 0$, or $f(t)$ is affine. Clearly, in the first case, $u_{2}\leq u_{1}$ and by reversing the role of $u_{1}$ and $u_{2}$ we conclude $u_{1} = u_{2}$ as desired.
\end{proof}

\subsection{Symmetry via average}\label{section-symmetrization-linear}
As a warm-up for the investigations of the paper we observe that, clearly, if the boundary value problem at hand
\begin{equation}\tag{\ref{DP}}
\begin{cases}
\Delta_{\Psi} u = f(u) & \text{in }\O  \\
u = c_{j} \in \rr& \text{on }(\partial \O)_{j}
\end{cases}
\end{equation}
has a unique solution, and we are able to construct at least one symmetric solution, then we are done. This happens e.g. in the affine setting $f(t) = At+B$. Indeed, the equation is clearly preserved by the average procedure, hence a symmetric solution exists. In order for the maximum principle to hold, we just need to assume that either $A\geq 0$ or, more generally, that $\Omega$ is small enough in the spectral sense, i.e.  $\l_{1}^{-\Delta_{\Psi} + A}(\O) > 0$. Thus, any solution of the corresponding Dirichlet problem \eqref{DP} is automatically strictly stable. This is the simplest situation that can occur.\medskip

\begin{proposition}\label{prop-symmetry-uniqueness}
 Let $M_{\Psi}$ be a weighted manifold  and let $\bar \O $ be a smooth, compact,  $\Psi$-isoparametric domain. The connected components of its boundary are denoted by $(\partial \O )_{j}$, $j=1,2$.

Let $u \in C^{\infty}(\O ) \cap C^{0}(\bar \O  )$ be a strictly stable solution of the problem
 \begin{equation}\label{Dirichlet-isoparametric}
\begin{cases}
 \Delta_{\Psi} u = Au+B & \text{in }\O  \\
 u = c_{j} & \text{on }(\partial \O )_{j}
\end{cases}
 \end{equation}
where $B,c_{j} \in \rr$. Then, $u$ is symmetric.
\end{proposition}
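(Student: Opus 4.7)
The plan is to produce a symmetric solution to \eqref{Dirichlet-isoparametric} by averaging $u$ over the leaves of the foliation, and then invoke uniqueness to conclude that $u$ must coincide with its own average. This strategy exploits two features specific to the affine setting: the equation $\Delta_\Psi u = Au+B$ is preserved verbatim by the averaging procedure, and the strict stability hypothesis ensures that the relevant linearized operator admits a maximum principle via Corollary \ref{Cor:CompactCase}.

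First, I set $v = \CA_\Psi(u)$. By Savo's commutation Lemma \ref{lem: Commutation law}, the function $v$ lies in $C^\infty(\O)$ and satisfies
\begin{equation*}
\Delta_\Psi v \;=\; \CA_\Psi(\Delta_\Psi u) \;=\; \CA_\Psi(Au+B) \;=\; A v + B,
\end{equation*}
where the last equality uses linearity of $\CA_\Psi$ and the trivial fact that $\CA_\Psi$ fixes constants. Next I check boundary conditions. Since $\bar \O$ is compact and $\Psi$-isoparametric with two boundary components, each $(\partial \O)_j$ is a regular leaf of the foliation; from $u|_{(\partial \O)_j} \equiv c_j$ it then follows that
\begin{equation*}
v|_{(\partial \O)_j} \;=\; \frac{1}{\area_\Psi (\partial \O)_j}\int_{(\partial \O)_j} c_j\,\da_\Psi \;=\; c_j.
\end{equation*}
Continuity of $u$ up to $\partial \O$ transfers to $v$, so $v \in C^0(\bar \O)\cap C^\infty(\O)$ is another solution of the Dirichlet problem \eqref{Dirichlet-isoparametric}, and it is manifestly symmetric.

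Finally I invoke uniqueness. The difference $w = u - v$ satisfies
\begin{equation*}
\begin{cases}
(\Delta_\Psi - A)\,w = 0 & \text{in }\O,\\
w = 0 & \text{on }\partial \O.
\end{cases}
\end{equation*}
The operator $\CL = \Delta_\Psi - A = \Delta_\Psi - f'(u)$ is precisely the stability operator of $u$, and strict stability gives $\l_1^{-\CL}(\O) > 0$. Applying Corollary \ref{Cor:CompactCase} to the admissible test functions $w$ and $-w$, which both satisfy $\Delta_\Psi(\pm w) \geq f'(u)\,(\pm w)$ with vanishing boundary values, yields $w \leq 0$ and $-w \leq 0$. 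Hence $w \equiv 0$ and $u = \CA_\Psi(u)$ is symmetric, as required.

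The argument is essentially frictionless once Lemma \ref{lem: Commutation law} is available; no single step is a genuine obstacle. The affine hypothesis is doing the heavy lifting by ensuring that averaging preserves the equation exactly, without producing any convexity defect, while strict stability is what delivers a maximum principle for the linear Schr\"odinger operator $\Delta_\Psi - A$ even when the zeroth-order coefficient has an unfavorable sign.
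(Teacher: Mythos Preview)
Your proof is correct and follows essentially the same route as the paper: set $w=u-\CA_\Psi(u)$, use the commutation rule $[\CA_\Psi,\Delta_\Psi]=0$ to see that $w$ solves $\Delta_\Psi w = Aw$ with zero boundary data, and conclude $w\equiv 0$ by the maximum principle for the strictly stable operator. Your write-up is somewhat more explicit (you verify the boundary values of $\CA_\Psi(u)$ and apply Corollary~\ref{Cor:CompactCase} to both $\pm w$), but the argument is the same.
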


\begin{proof}
Using the commutation rule $[\CA_{\Psi},\Delta_{\Psi}]=0$ we see that the smooth function
\[
w = u - \CA_{\Psi}(u)
\]
solves the problem
\[
\begin{cases}
 \Delta_{\Psi} w = A w & \text{in }\O  \\
 w = 0 & \text{on }\partial \O.
\end{cases}
\]
The  maximum principle yields $w=0$ which means
\[
u = \CA_{\Psi}(u)\quad \text{on }\O
\]
as desired.
\end{proof}

%%%%%%%%%%%%%%%%%%%%%%%%%%%%%%%%%%%%%%%%%%%%%%%%%%%%%%%%%%%%%%%%%

\section{Symmetry of solutions on $\Psi$-homogeneous domains}\label{section-geometicDupaigne}

The main result of the section is a geometric interpretation of the arguments in \cite[Proposition 1.3.4]{Du}. The original symmetry result, for rotationally symmetric domains in the Euclidean spaces, is proved in \cite[Lemma 1.1]{AB}.

\begin{theorem}\label{th-weighed-Doupaigne}
Let $\bar \O$ be a compact $\Psi$-homogeneous domain with soul $P$ inside the weighted manifold $M_{\Psi}$. If $\mathcal{D}=\{X_1,...,X_k\}$ is an integrable distribution of Killing vector fields associated to the foliation of $\bar \O$, suppose that $\Psi$ satisfies the compatibility condition
 \begin{align}\label{Eq:KillingConstantAngle}
 g(X_i,\nabla \Psi) \equiv const \quad \text{on }\O,
 \end{align}
for every $i=1,...,k$.

Then, a stable solution $u \in C^{3}(\O ) \cap C^{1}(\bar \O )$ of
\begin{equation}\label{DP-tubes}
\begin{cases}
 \Delta_{\Psi} u = f(u) & \O \\
 u = c_{j} & (\partial \O )_{j}
\end{cases}
\end{equation}
is symmetric if and only if at least one of the following conditions hold:
\begin{enumerate}
\item [a)] \label{Cond:Killing1} $g(\nabla \Psi, X_i)\equiv 0$ for every $i=1,...,k$, i.e. $\Psi(x)=\widehat{\Psi}(\textnormal{dist}(x,P))$ is symmetric;
\item [b)]\label{Cond:Killing2} the mean value of $u$ over $\bar \O$ is zero.
\end{enumerate}
\end{theorem}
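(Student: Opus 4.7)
The strategy follows the Dupaigne/Alikakos--Bates argument, adapted to the weighted $\Psi$-homogeneous setting: differentiate the equation along each Killing field $X_i$, use stability to force the derivative to vanish, and then conclude that $\nabla u$ is normal to every regular leaf. First, I would check that \eqref{Eq:KillingConstantAngle} implies the commutation $X_i\circ\Delta_\Psi = \Delta_\Psi\circ X_i$. Since $X_i$ is Killing we have $X_i \Delta = \Delta X_i$ and $[X_i,\nabla f] = \nabla(X_i f)$ for every smooth $f$; the $\Psi$-correction produces $X_i\, g(\nabla\Psi,\nabla u) = g(\nabla(X_i\Psi),\nabla u) + g(\nabla\Psi,\nabla(X_i u))$, and the first summand vanishes because $X_i\Psi = g(X_i,\nabla\Psi)$ is constant. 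Differentiating $\Delta_\Psi u = f(u)$ then gives $(\Delta_\Psi - f'(u))(X_i u) = 0$ on $\O$, so $X_i u$ lies in the kernel of the stability operator $\CL$.

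Next I would assemble the two boundary/integral consequences of the $\Psi$-homogeneous structure. Each boundary component is a regular leaf, $X_i$ is tangent to it, and $u$ is constant there, so $X_i u \equiv 0$ on $\partial\O$. Writing $X_i u = \div_\Psi(u X_i) - u\,\div_\Psi X_i$ and using that a Killing field is divergence-free yields $\div_\Psi X_i = -c_i$ with $c_i := g(X_i,\nabla\Psi)$; the $\Psi$-divergence theorem together with the tangency $g(X_i,\vec\nu) = 0$ on $\partial\O$ then produces the key identity
\begin{equation*}
\int_\O X_i u\,\dv_\Psi \;=\; c_i \int_\O u\,\dv_\Psi.
\end{equation*}
Hypothesis (a) (every $c_i$ vanishes) and hypothesis (b) ($\int_\O u\,\dv_\Psi = 0$) both force $\int_\O X_i u\,\dv_\Psi = 0$ for every $i$.

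The decisive step is to invoke stability. Since $X_i u \in C^0(\bar\O)$ vanishes on $\partial\O$ and has $L^2$ gradient in $\O$ (by elliptic boundary regularity, inherited from $f \in C^1$ and smooth $\partial\O$), it belongs to $W^{1,2}_0(\O)$ and is admissible in the Rayleigh quotient of $\CL$. Testing the stability inequality against $X_i u$ and comparing with the integration by parts
\begin{equation*}
\int_\O |\nabla(X_i u)|^2\,\dv_\Psi \;=\; -\int_\O (X_i u)\,\Delta_\Psi(X_i u)\,\dv_\Psi \;=\; -\int_\O f'(u)\,(X_i u)^2\,\dv_\Psi
\end{equation*}
forces $X_i u$ to realize the infimum $\lambda_1^{-\CL}(\O)\geq 0$, so either $X_i u \equiv 0$ or $X_i u$ is a first Dirichlet eigenfunction of $\CL$ and hence of constant sign. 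The latter alternative is ruled out by the zero-mean-value identity. Therefore $X_i u \equiv 0$ for every $i$; since $\mathcal{D}$ spans the tangent space of every regular leaf, $\nabla u$ is perpendicular to the leaves and $u$ is locally, hence globally, symmetric by the lemma in Section \ref{section-symmetricfunctions}. The converse is immediate: a symmetric $u$ satisfies $X_i u \equiv 0$, whence the identity above gives $c_i \int_\O u\,\dv_\Psi = 0$ for every $i$, forcing (a) or (b).

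The main difficulty is the borderline case $\lambda_1^{-\CL}(\O)=0$: stability alone does not prevent $X_i u$ from being a non-trivial first eigenfunction, and one must combine it with the mean-value identity, which in turn rests on the compatibility condition \eqref{Eq:KillingConstantAngle}. This is precisely why the statement bifurcates into the two alternatives (a) and (b).
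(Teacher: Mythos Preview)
Your proposal is correct and follows essentially the same route as the paper: establish the commutation $[\Delta_\Psi,X_i]=0$ from \eqref{Eq:KillingConstantAngle}, observe that $v=X_i u$ solves the linearized problem with zero Dirichlet data, use stability to force $v$ to be either identically zero or a signed first eigenfunction, and rule out the latter via the divergence-theorem identity $\int_\O X_i u\,\dv_\Psi = c_i\int_\O u\,\dv_\Psi$. One minor addition worth noting: you make the converse direction explicit, while the paper leaves it embedded in the equivalence between the zero-mean condition and (a)/(b).
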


\begin{remark} For a Killing vector field $X$, condition \eqref{Eq:KillingConstantAngle} can be seen as a $\Psi$-compatibility property. Indeed, since $\div(X)=0$,
\begin{gather*}
g(X,\nabla \Psi) \equiv const\\
\Updownarrow\\
\div_\Psi(X)=\div(X)-g(X,\nabla \Psi)\equiv const.
\end{gather*}
Thus, in condition \eqref{Eq:KillingConstantAngle}, we are requiring that the divergence-free property of the Killing field $X$ is (in a certain sense) inherited by the weighted manifold.
\end{remark}

The proof of Theorem \ref{th-weighed-Doupaigne} relies on the fact that ($\Psi$-)Killing vector fields well behave with respect to the (weighted) Laplace-Beltrami operator. We first recall the following known characterization. 

\begin{lemma}\label{lemma-Killing}
 Let $(M,g)$ be a Riemannian manifold. Then, the vector field $X$ is Killing if and only if the commutation rule $[\Delta , X]=0$ holds. This means that, for any smooth function $u$, $\Delta X(u) = X(\Delta u)$.
\end{lemma}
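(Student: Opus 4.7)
The plan is to prove the two implications by different routes, the forward one via the integral form of the action of $X$ and the converse via a pointwise symbol computation.

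For the forward direction, I would exploit the fact that $X$ Killing is equivalent to its local flow $\{\phi_{t}\}$ consisting of isometries: $\phi_{t}^{\ast}g=g$. Since the Laplace--Beltrami operator is a natural operator on $(M,g)$, every local isometry $\phi$ satisfies $\Delta(u\circ\phi) = (\Delta u)\circ\phi$ for every smooth $u$. Applying this to $\phi_{t}$, differentiating both sides at $t=0$, and using $\frac{d}{dt}\big|_{t=0}(u\circ\phi_{t})=X(u)$ as well as $\frac{d}{dt}\big|_{t=0}(\Delta u)\circ\phi_{t}=X(\Delta u)$, I get $\Delta(Xu) = X(\Delta u)$, i.e.\ $[\Delta,X]=0$.

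For the converse, the key step is to derive the pointwise commutator identity
\[
\Delta(Xu) - X(\Delta u) \;=\; \langle L_{X}g,\Hess u\rangle \;+\; \langle \Delta X,\nabla u\rangle \;+\; \ric(X,\nabla u),
\]
valid for every smooth $u$, where $\Delta X$ denotes the connection (rough) Laplacian of $X$ and the first contraction is of two symmetric $(0,2)$-tensors. I would obtain this either by a direct computation in normal coordinates at an arbitrary point $p$ (where Christoffel symbols vanish and the third-order terms in $u$ cancel identically), or more intrinsically by combining the product formula $\Delta\langle X,\nabla u\rangle=\langle\Delta X,\nabla u\rangle+\langle X,\Delta\nabla u\rangle+2\langle\nabla X,\Hess u\rangle$ with the Weitzenb\"ock identity $\Delta\nabla u = \nabla\Delta u + \ric(\nabla u,\cdot)^{\sharp}$, and noting that the symmetry of $\Hess u$ turns $2\langle\nabla X,\Hess u\rangle$ into $\langle L_{X}g,\Hess u\rangle$ (since $L_{X}g = 2\,\mathrm{Sym}(\nabla X^{\flat})$).

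Assuming now $[\Delta,X]u\equiv 0$ for all smooth $u$, I fix $p\in M$ and pick test functions whose 1-jet at $p$ vanishes and whose Hessian at $p$ is an arbitrary prescribed symmetric tensor $H$ (e.g.\ quadratic polynomials in normal coordinates centered at $p$). The terms involving $\nabla u$ and $\ric(X,\nabla u)$ drop out, leaving $\langle L_{X}g(p),H\rangle=0$ for every symmetric $H$, hence $L_{X}g(p)=0$. Since $p$ was arbitrary, $X$ is Killing. The main obstacle is really bookkeeping: establishing the commutator identity cleanly. Beyond that, the argument is essentially a localization via the principal symbol of the operator $[\Delta,X]$, which is of order two with symbol $L_{X}g$, so the vanishing of the commutator forces the vanishing of $L_{X}g$.
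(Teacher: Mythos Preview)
Your proposal is correct, and the forward direction is essentially identical to the paper's argument (attributed there to Matveev): differentiate the identity $\Delta(u\circ\phi_{t})=(\Delta u)\circ\phi_{t}$ at $t=0$.

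For the converse, your route differs from the paper's. The paper argues at the level of flows: $[\Delta,X]=0$ implies that the flow $\phi_{t}$ of $X$ preserves $\Delta$, and then invokes the fact that the Laplace--Beltrami operator determines the Riemannian metric uniquely (itself a principal-symbol statement) to conclude $\phi_{t}^{\ast}g=g$. Your argument bypasses the flow and works pointwise: you establish the explicit commutator identity
\[
[\Delta,X]u=\langle L_{X}g,\Hess u\rangle+\langle\Delta X,\nabla u\rangle+\ric(X,\nabla u)
\]
and then probe with test functions to isolate the second-order term, forcing $L_{X}g=0$. Both arguments are ultimately symbol computations---yours extracts the symbol by hand, while the paper packages it as ``$\Delta$ determines $g$''. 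Your version is more self-contained and yields the commutator formula as a useful byproduct (indeed, this is the ``computational proof that involves generic vector fields'' the paper attributes to \cite{FMV}); the paper's version is shorter but leans on a fact that, if unpacked, amounts to the same linear-algebra step you perform at $p$.
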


\begin{proof}
 See \cite{FMV} for a computational proof that involves generic vector fields. On the other hand, following V. Matveev, the commutation rule can be also deduced directly from the fact that the flow of a Killing vector field is an infinitesimal isometry. Conversely, if the commutation rule holds then the flow of $X$ preserves the Laplacian and the Laplacian determines uniquely the Riemannian metric.
\end{proof}

In the special case of a Killing vector field tangential to the leaves of a weighted isoparametric domain, the commutation extends to the weighted Laplacian. This is a special case of the following

\begin{lemma}\label{lemma-weightedcomm}
 Let $M_{\Psi}$ be a  weighted manifold. If $X$ is a Killing vector field satisfying condition \eqref{Eq:KillingConstantAngle}, then
 \[
[ \Delta_{\Psi} , X ] = 0,\quad \text{on } \O 
 \]
 in the sense that, for any smooth function $u$ on $\O $,
 \[
\Delta_{\Psi}X(u) =  X(\Delta_{\Psi} u).
 \]
\end{lemma}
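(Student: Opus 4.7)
The plan is to directly unwind the definition of $\Delta_\Psi = \Delta - g(\nabla\Psi,\nabla\,\cdot\,)$ and reduce the commutator $[\Delta_\Psi, X]$ to two pieces: the unweighted commutator $[\Delta, X]$, which vanishes by Lemma \ref{lemma-Killing}, and a first-order piece involving $\nabla\Psi$, which will vanish precisely thanks to the compatibility condition \eqref{Eq:KillingConstantAngle}. Concretely, for any smooth $u$ on $\Omega$,
\[
\Delta_\Psi X(u) - X(\Delta_\Psi u) = \bigl(\Delta X(u) - X(\Delta u)\bigr) - \Bigl( g(\nabla\Psi,\nabla X(u)) - X\bigl(g(\nabla\Psi,\nabla u)\bigr) \Bigr),
\]
so the whole task is to show that the second bracket is zero.

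To handle the second bracket I would invoke the following elementary identity for any Killing field $X$ and any smooth function $f$ on $M$:
\[
\mathcal{L}_X \nabla f = \nabla X(f),
\]
i.e.\ $[X,\nabla f] = \nabla(X(f))$. This is just the statement that the flow of $X$, being isometric, intertwines the musical isomorphism and the exterior differential; alternatively, testing against an arbitrary vector field $Z$ and using $\mathcal{L}_X g = 0$ one finds $g([X,\nabla f],Z) = X(Z(f)) - [X,Z](f) = Z(X(f)) = g(\nabla X(f),Z)$. Applying this identity with $f = \Psi$ and $f = u$, together with the Leibniz rule along $X$ (which preserves $g$), gives
\[
X\bigl(g(\nabla\Psi,\nabla u)\bigr) = g(\nabla X(\Psi),\nabla u) + g(\nabla\Psi,\nabla X(u)).
\]

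At this point the compatibility condition \eqref{Eq:KillingConstantAngle} enters: since $X(\Psi) = g(X,\nabla\Psi)$ is a constant on $\Omega$, its gradient vanishes, so $\nabla X(\Psi) \equiv 0$ and the first term on the right disappears. Combined with $[\Delta, X] = 0$ from Lemma \ref{lemma-Killing}, this yields $\Delta_\Psi X(u) = X(\Delta_\Psi u)$ as required.

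The only delicate step is the identity $\mathcal{L}_X\nabla f = \nabla X(f)$ for Killing $X$; it is this step that crucially uses $\mathcal{L}_X g = 0$, and without it the correction term $(\mathcal{L}_X g)(\nabla f, \cdot)$ would contaminate the computation. Everything else is a routine unpacking of definitions and an appeal to \eqref{Eq:KillingConstantAngle} to kill the residual $g(\nabla X(\Psi), \nabla u)$ term.
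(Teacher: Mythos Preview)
Your proof is correct and follows the same overall reduction as the paper: both write $\Delta_\Psi = \Delta - g(\nabla\Psi,\nabla\,\cdot\,)$, dispatch the $[\Delta,X]$ term via Lemma \ref{lemma-Killing}, and are left with showing
\[
g(\nabla\Psi,\nabla X(u)) = X\bigl(g(\nabla\Psi,\nabla u)\bigr).
\]
Where you diverge is in how this identity is established. The paper expands both sides using the Levi-Civita connection: it writes $g(\nabla\Psi,\nabla X(u)) = -g(D_{\nabla u}X,\nabla\Psi) + \Hess(u)(X,\nabla\Psi)$ via the skew-symmetry of $DX$, then differentiates $g(X,\nabla\Psi)=const$ along $\nabla u$ to convert $-g(D_{\nabla u}X,\nabla\Psi)$ into $\Hess(\Psi)(X,\nabla u)$, and finally matches against the direct expansion $D_X g(\nabla\Psi,\nabla u) = \Hess(\Psi)(X,\nabla u)+\Hess(u)(X,\nabla\Psi)$. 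Your route bypasses all the Hessian bookkeeping by invoking the single Lie-derivative identity $\mathcal{L}_X\nabla f = \nabla X(f)$ for Killing $X$, applied simultaneously to $f=\Psi$ and $f=u$; the constant-angle hypothesis then kills $\nabla X(\Psi)$ in one stroke. Your argument is shorter and makes more transparent \emph{why} the Killing condition is exactly what is needed (it is the naturality of $\nabla$ under isometric flows), while the paper's computation has the virtue of being entirely at the level of covariant derivatives without appeal to Lie-derivative formalism.
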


\begin{proof}
 Recall that
 \[
 \Delta_{\Psi} u = \Delta u - g(\nabla \Psi,\nabla u)
 \]
 and that, since $X$ is Killing,
 \[
 [\Delta , X ] = 0.
 \]
 Therefore, we are reduced to verify that
\begin{equation}\label{eq-weightedcomm}
 g(\nabla \Psi , \nabla X(u)) = D_{X}g(\nabla \Psi , \nabla u).
\end{equation}
To this end, let us start by computing
\begin{align*}
 g(\nabla \Psi , \nabla X(u) ) &= g(\nabla \Psi , \nabla g(X,\nabla u)) \\
 &= D_{\nabla \Psi} g(X,\nabla u) \\
 &= g(D_{\nabla \Psi} X, \nabla u) + g(X,D_{\nabla \Psi} \nabla u)\\
 &= -g(D_{\nabla u} X, \nabla \Psi) + \Hess(u)(X,\nabla \Psi),
\end{align*}
where in the last equality we have used that $X$ is Killing and the definition of the Hessian tensor. Now
\begin{align*}
 g(X,\nabla \Psi) = const &\Longrightarrow D_{\nabla u} \, g(X,\nabla \Psi) = 0 \\
 &\Longrightarrow g(D_{\nabla u} X , \nabla \Psi) + g(X, D_{\nabla u} \nabla \Psi) = 0\\
&\Longrightarrow -g(D_{\nabla u} X, \nabla \Psi) = \Hess(\Psi) (X,\nabla u).
\end{align*}
Inserting into the above gives
\begin{equation}\label{eq-weightedcomm1}
  g(\nabla \Psi , \nabla X(u) )  =  \Hess(u)(X,\nabla \Psi) + \Hess(\Psi) (X,\nabla u) .
\end{equation}
On the other hand,
\begin{align}\label{eq-weightedcomm2}
 D_{X} g(\nabla \Psi , \nabla u) &= g(D_{X}\nabla \Psi , \nabla u) + g(\nabla \Psi,D_{X}\nabla u) \\ \nonumber
 &= \Hess(\Psi)(X,\nabla u) + \Hess(u)(X,\nabla \Psi) .
\end{align}
Putting together \eqref{eq-weightedcomm1} and \eqref{eq-weightedcomm2} we conclude the validity of \eqref{eq-weightedcomm} as desired.
\end{proof}

We are now in the position to give the

\begin{proof}[Proof of Theorem \ref{th-weighed-Doupaigne}]
Consider a distribution $\mathcal{D}=\{X_{1},\cdots,X_{k} \}$ of Killing vector fields tangential to the leaves of the foliation and satisfying $g(\nabla \Psi, X_i)=const$ for every $i=1,...,k$. Let $X = X_{j}$ and define
 \[
 v = X(u) = g(\nabla u, X).
 \]
Since $u$ is locally constant on $\partial \O$ and $X|_{\partial \O}$ is tangential to $\partial \O$, we have
 \[
 v = 0\quad \text{on }\partial \O.
 \]
On the other hand, by Lemma \ref{lemma-weightedcomm} we deduce that
\[
\Delta_{\Psi} v = X(\Delta_{\Psi} u) = X(f(u)) = f'(u) X(u) = f'(u) v.
\]
It follows that $v \in C^{2}(\O)$ is a solution of the problem
\[
\begin{cases}
 \Delta_{\Psi} v = f'(u) \, v & \O \\
 v = 0 & \partial \O.
\end{cases}
\]
In particular, $\l_{1}^{-\Delta_{\Psi} + f'(u)}(\O) = 0$ and $v$ is a first eigenfunction corresponding to this Dirichlet eigenvalue. By the nodal domain theorem,
\[
v \geq 0.
\]
We are going to prove that the validity of at least one of the conditions a) or b) is equivalent to
\begin{equation}\label{zeromean}
\int_{\O} v \, \dv_{\Psi} =0
\end{equation}
and, hence to
\[
v\equiv 0.
\]
To this end, we use the $\Psi$-divergence theorem with  the vector field $Z = uX$. Since $\div X =0$ and $X_{x}$ is tangential to $\Sigma_{d(x)}$, on the one hand we have
\begin{align*}
 \int_{\O} \div_{\Psi} Z \, \dv_{\Psi} &= \int_{\O} g(\nabla u , X) \, \dv_{\Psi}+ \int_{\O} u \div_{\Psi} X\, \dv_{\psi} \\
 &=  \int_{\O} v \, \dv_{\Psi} + \int_{\O} u\div X \, \dv_{\Psi}- \int_{\O}  u \, g(\nabla \Psi , X) \dv_{\Psi}\\
 &= \int_{\O}v \, \dv_{\Psi}- g(\nabla \Psi, X) \int_{\O} u\ \dv_\Psi.
\end{align*}
On the other hand,
\begin{align*}
 \int_{\O} \div_{\Psi} Z \, \dv_{\Psi}= \int_{\partial \O} g(Z, \vec{\nu}) \da_{\Psi}= \int_{\partial \O} u \, g(X,\pm \nabla d) \da_{\Psi} = 0,
\end{align*}
where $d(x)=\textnormal{dist}(x,P)$. By putting together these two expressions we obtain
\begin{align*}
\int_{\O} v\ \dv_\Psi=g(\nabla \Psi, X) \int_{\O} u\ \dv_\Psi
\end{align*}
That is, \eqref{zeromean} holds if and only if either $g(\nabla \Psi, X)\equiv 0$ or $u$ has vanishing integral.

We have thus proved that if at least one of the conditions a) and b) is satisfied, then
\[
X_{j}(u)(x_{0}) = 0,\quad \forall j=1,\cdots,k,\ \ \ \forall x_0 \in \bar \O.
\]
Thanks to the fact that $\{X_{1}|_{x_{0}},\cdots,X_{k}|_{x_{0}}\}$ generates $T_{x_{0}}\Sigma_{d(x_{0})}$, this implies that $u$ is locally symmetric, and hence symmetric, on $\bar \O$. The proof of Theorem \ref{th-weighed-Doupaigne} is completed.
\end{proof}

%%%%%%%%%%%%%%%%%%%%%%%%%%%%%%%%%%%%%%%%%%%%%%%%%%%%%%%%%%%%%%%

\section{Symmetry of solutions in a non-homogeneous case}\label{SymmetryWithoutKillingFields}

In this section we discuss a case where we cannot apply Theorem \ref{th-weighed-Doupaigne} due to the absence of enough (if any) Killing vector fields tangential to the leaves of the tube. In fact, recall that, in nonpositive curvature, Killing fields tangential to the (concave) boundary of a domain are trivial as the following classical  theorem shows; see \cite {Ya}.

\begin{theorem}[Weighted Yano-Bochner]\label{th-YB}
 Let $M_\Psi=(M,g, \dv_\Psi)$ be a compact weighted Riemannian manifold with (possibly empty) concave boundary $\partial M$. This means that, if $\vec \nu$ denote the outer unit normal to $\partial M$, then $\mathrm{II}(Z,Z) = g(D_{Z}(-\vec \nu),Z) \geq 0$ for every $Z \in T\partial M$. Assume also that $\ric_\Psi=\ric+\Hess \Psi \leq 0$. 
 
Then, every Killing vector field $X$ on $M$ such that $X|_{\partial M} \in T\partial M$ and satisfying $\div_\Psi (X)\equiv const$ must be parallel. In particular, $|X| \equiv const$. Moreover, if $\ric_{\Psi}< 0$ at some point, then $X = 0$.
\end{theorem}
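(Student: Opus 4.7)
The plan is to establish a weighted Bochner identity for the Killing field $X$ and then integrate it over $M$ against $\dv_\Psi$, converting the boundary term into a second fundamental form expression via the Killing equation. Starting from the universal identity $\tfrac{1}{2}\Delta|X|^2 = |\nabla X|^2 + g(\Delta X, X)$ and the standard consequence $\Delta X = -\ric(X)^{\sharp}$ for a Killing field (which follows from $\nabla^2_{Y,Z}X = R(X,Y)Z$ upon tracing), one gets the familiar
\[
\tfrac{1}{2}\Delta|X|^2 = |\nabla X|^2 - \ric(X,X).
\]

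To promote this to the $\Psi$-Laplacian I would rewrite the drift term as $\tfrac{1}{2}\langle \nabla\Psi, \nabla|X|^2\rangle = g(\nabla_{\nabla\Psi} X, X)$ and apply the Killing equation to the pair $(\nabla\Psi, X)$ to get $g(\nabla_{\nabla\Psi} X, X) = -g(\nabla_X X, \nabla\Psi)$. The Hessian identity $\Hess\Psi(X,X) = X(X\Psi) - (\nabla_X X)\Psi$ then yields $g(\nabla_X X, \nabla\Psi) = X(X\Psi) - \Hess\Psi(X,X)$, while $\div X = 0$ (automatic for Killing fields) gives $X\Psi = g(X,\nabla\Psi) = -\div_\Psi X$. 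Under the hypothesis $\div_\Psi X \equiv const$, the orphan term $X(X\Psi) = -X(\div_\Psi X)$ vanishes, and one assembles Ricci and Hessian into
\[
\tfrac{1}{2}\Delta_\Psi|X|^2 = |\nabla X|^2 - \ric_\Psi(X,X).
\]

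Integrating this identity over $M$ against $\dv_\Psi$ and invoking the $\Psi$-divergence theorem produces
\[
\int_M \bigl(|\nabla X|^2 - \ric_\Psi(X,X)\bigr)\,\dv_\Psi \;=\; \int_{\partial M} g(\nabla_{\vec\nu} X, X)\,\da_\Psi.
\]
Since $X|_{\partial M} \in T\partial M$, the Killing equation on $(\vec\nu, X)$ converts the boundary integrand into $-g(\nabla_X X, \vec\nu) = -\mathrm{II}(X,X)$, which is nonpositive by the concavity assumption. Combined with $\ric_\Psi \leq 0$, the left-hand side becomes a sum of two nonnegative integrands bounded above by a nonpositive quantity, so each of them must vanish identically; in particular $\nabla X \equiv 0$, which means $X$ is parallel and $|X|$ is constant, while $\ric_\Psi(X,X) \equiv 0$ everywhere and $\mathrm{II}(X,X)\equiv 0$ on $\partial M$. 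If moreover $\ric_\Psi(p) < 0$ at some point $p\in M$, the pointwise identity $\ric_\Psi(X,X)(p) = 0$ forces $X(p) = 0$, and a parallel vector field on a connected manifold that vanishes at one point vanishes identically.

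The main technical delicacy is the careful reassembly of the weighted correction terms into the intrinsic tensor $\ric_\Psi$: the Killing equation has to be used once to swap $\nabla_{\nabla\Psi}X$ for $-\nabla_X X$, and the Hessian identity for $\Psi$ together with $X\Psi = -\div_\Psi X$ has to be invoked to dispose of the stray scalar $X(X\Psi)$. This is precisely where the compatibility condition $\div_\Psi X \equiv const$ becomes essential, so the step clarifies why that hypothesis is the natural weighted analogue of the Killing condition (rather than the bare Killing equation alone) under which the Bochner integration closes.
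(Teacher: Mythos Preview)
Your argument is correct and complete. The derivation of the weighted Bochner identity is clean, the boundary manipulation via the Killing equation and tangentiality is right, and the sign analysis closes as you say.

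The paper, however, takes a different route: after obtaining the same pointwise identity $\tfrac{1}{2}\Delta_\Psi|X|^2 = |DX|^2 - \ric_\Psi(X,X) \geq 0$, it does \emph{not} integrate. Instead it computes $\partial_{\vec\nu}|X|^2 = -2\,\mathrm{II}(X,X) \leq 0$ on $\partial M$ and observes that $v = |X|^2$ is a $\Psi$-subharmonic function with nonpositive outer normal derivative; the Hopf Lemma then forces $v \equiv const$, and substituting back into the Bochner identity gives $|DX| = 0$ and $\ric_\Psi(X,X) = 0$ pointwise. Your integration approach is the more classical Yano-style argument and is arguably more elementary, since it bypasses any boundary maximum principle. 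What the paper's maximum-principle approach buys is portability: it is the template the authors reuse throughout the paper, and (as they note in the remark following the theorem) it extends verbatim to complete $\mathcal{N}$-parabolic manifolds of bounded Killing length, where your global integration would require additional justification.
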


\begin{proof}
The weighted version of Bochner formula for Killing vector fields satisfying $\div_\Psi (X)\equiv const$ states that
 \[
 \frac{1}{2} \Delta_\Psi |X|^{2} = |D X|^{2} - \ric_\Psi(X,X).
 \]
Therefore, using the curvature assumption,
 \[
 \Delta_\Psi |X|^{2} \geq 0.
 \]
 By the Killing condition and the fact that $X|_{\partial \O}$ is tangential to $\partial \O$ we get
\begin{align*}
\partial_{\vec \nu} |X|^{2} &= -2\mathrm{II}(X,X),\quad \text{on }\partial \O.
\end{align*}
It follows that $v = |X|^{2}$ is a solution of the problem
\[
\begin{cases}
 \Delta_\Psi v \geq 0 & \O \\
 \partial_{\vec \nu} v = -2 \mathrm{II}(X,X) \leq 0 & \partial \O.
\end{cases}
\]
By the Hopf Lemma, $v \equiv const$. Using this information into the Bochner formula  gives that $|DX| =0$, i.e. $X$ is parallel, and $\ric_\Psi(X,X) = 0$.
\end{proof}

\begin{remark}\label{GeneralizedWeightedBochner}
For a general Killing vector field, without any request on the $\Psi$-divergence, the weighted Bochner formula states that
\begin{align*}
\frac{1}{2} \Delta_\Psi |X|^{2} = |D X|^{2} - \ric_\Psi(X,X)+Xg(X,\nabla \Psi)
\end{align*}
or, equivalently,
\begin{align*}
\frac{1}{2} \Delta_\Psi |X|^{2} = |D X|^{2} - \ric_\Psi(X,X)+g(X,\nabla \div_\Psi (X))
\end{align*}
Thus, the previous Theorem can be slightly generalised to Killing vector fields tangent to the boundary of the manifold and satisfying
\begin{align*}
g(X,\nabla \div_\Psi(X))\geq 0
\end{align*}
\end{remark}

\begin{remark}
Formally, the conclusion of Theorem \ref{th-YB} can be extended to Killing fields of bounded length on a complete Riemannian manifold with boundary and with quadratic volume growth. See Sections \ref{Section:Parabolicity-NonCaompactMP} and  \ref{section-tools-potential}.
\end{remark}

\begin{example}\label{ex-noKilling}
Take the annulus $A(-1,+1)=[-1,+1] \times N$ inside the Riemannian warped cylinder $M = \rr \times_\sigma N$ where:
\begin{enumerate}
 \item [i)] $(N,g^{N})$ is compact, $\partial N = \emptyset$,  and $\sect^{N}\equiv - k^{2}<0$;
 \item [ii)] $\s'(-1) \leq 0$, $\s'(+1)\geq 0$;
 \item [iii)] $\s''(r) \geq 0$ in $[-1,1]$.
\end{enumerate}
We have already observe in Example \ref{Ex:WarpedProduct} that $A(-1,1)$ is an isoparametric domain with totally umbilical leaves $\Sigma_{t} = \{ t \} \times N$, $-1 \leq t \leq 1$. In particular,
\[
\mathrm{II}_{\Sigma_{\pm}} = \pm  \s'(\pm 1)\s(\pm 1) g^{N}.
\]
It follows from ii) that
\begin{enumerate}
 \item [a)] $\partial A(-1,1) = \Sigma_{\pm 1}$ is concave.
\end{enumerate}
Morever, recalling that
\[
\sect_{M}(X\wedge Y) =
\begin{cases}
 0 & X,Y = \nabla r \medskip \\
 -\frac{\s''(r)}{\s(r)} & X = \nabla r,\, Y \in TN \medskip\\
\frac{ -k^{2} - \s'(r)^{2}}{\s(r)^{2}} & X,Y \in TN
\end{cases}
\]
by iii) we have
\begin{enumerate}
 \item [b)] $\sect_{M}<0$.
\end{enumerate}
An application of Theorem \ref{th-YB} gives that any Killing vector field $X$ of $\bar A(-1,1)$ tangential to $\partial  A(-1,1)$ must vanish identically.
\end{example}

As we are going to show, in the situation of Example \ref{ex-noKilling} we are still able to deduce a symmetry result. But there is a prize to pay: beside the assumption that the solution of the boundary value problem is (strictly) stable, the nonlinearity $f(t)$ has to be concave. In particular, when the fibre $N$ is compact, we are in the regime of uniqueness of the solution; see Theorem \ref{th-uniqueness-Du}. Despite of this drawback, on the one hand, it is not clear how to produce a-priori a symmetric solution (clearly, average does not work) and, on the other hand, the method we use works in a more general setting where, apparently, the non-compact uniqueness result of Theorem \ref{Th:NoneEuclideanDupaigne} is not applicable. See Remark \ref{rem-th-noKilling}.

\subsection{A non-compact symmetry result: statement and comments}

Let $M_\Psi=(M,g^{M}, \dv_\Psi)$ be the $m$-dimensional weighted Riemannian manifold given as the warped product
\[
M = I \times_{\s} N
\] where $(N,g^{N})$ is a  possibly non-compact $(m-1)$-dimensional Riemannian manifold with $\partial N = \emptyset$,  $I \subseteq \rr$ is an  interval, $\s:I \to \rr_{>0}$ is a smooth function and
\begin{align}\label{Eq:WeightForm}
\Psi(r,\xi)=\Phi(r)+\Gamma(\xi)
\end{align}
splits into the sum of two smooth functions depending respectively on the $I$-variable and on the $N$-variable. Consider the annulus $\bar A(r_1,r_2) = [r_1,r_2] \times N$. By the coarea formula, the volume of $\bar A(r_1,r_2)$ has the expression 
\[
\vol_\Psi(\bar A(r_1,r_2)) = \vol_\Gamma(N) \int_{r_1}^{r_2} e^{-\Phi(r)}\ \s^{m-1}(r)\ \dr.
\]
Moreover, we note explicitly that
\[
\DeltaM u = \partial^{2}_{r}u+(m-1)\frac{\s'}{\s} \partial_{r}u + \frac{1}{\s^{2}}\DeltaN u
\]
and thus
\begin{align*}
\Delta^M_\Psi u &= \partial^{2}_{r}u+(m-1)\frac{\s'}{\s} \partial_{r}u + \frac{1}{\s^{2}}\DeltaN u-g(\nabla^M u,\nabla^M \Psi)\\
&=\partial^{2}_{r}u+\Big((m-1)\frac{\s'}{\s}-\Phi'\Big) \partial_{r}u + \frac{1}{\s^{2}}\DeltaN u-\sigma^2 g^N \Bigg(\frac{\nabla^N u}{\s^2}, \frac{\nabla^N \Gamma}{\s^2}\Bigg)\\
&=\partial^{2}_{r}u+\Big((m-1)\frac{\s'}{\s}-\Phi'\Big) \partial_{r}u + \frac{1}{\s^{2}}\DeltaN u-\frac{1}{\sigma^2} g^N (\nabla^N u, \nabla^N \Gamma)\\
&=\partial^{2}_{r}u+\Big((m-1)\frac{\s'}{\s}-\Phi'\Big) \partial_{r}u + \frac{1}{\s^2} \Delta^{N}_\Gamma u
\end{align*}
In particular, $\bar A (r_1,r_2)$ is $\Psi$-isoparametric and we have the validity of the commutation rule
\begin{equation} \label{Eq:CommutatorLaplacian}
\begin{split}
[\Delta^M_\Psi , \Delta^N_\Gamma]=0.
\end{split}
\end{equation}

We are now ready to state our non-compact symmetry result. Since the underlying manifold is always $M_\Psi$ and there is no danger of confusion, from now on we shall omit the overscript $M$ in the corresponding quantities and operators.

\begin{theorem}\label{th-annuli}
Let $M_\Psi = (I \times_{\s} N)_\Psi$ where $(N,g^{N})$ is a complete (possibly non-compact), connected, $(m-1)$-dimensional Riemannian manifold with finite $\Gamma$-volume $\vol_{\Gamma}(N)<+\infty$.\smallskip
 
Let $u \in C^{4}(\bar {A}(r_{1},r_{2}))$ be a solution of  the Dirichlet problem
\begin{equation}\label{dir1}
\begin{cases}
 \Delta_\Psi u = f (u) & \text{in }A(r_{1},r_{2}) \\
 u \equiv c_{1} & \text{on }\{ r_{1} \}\times N \\
 u \equiv c_{2} &\text{on }\{r_{2}\} \times N.
\end{cases}
\end{equation}
where $c_{j} \in \rr$ are given constants and the function $f(t)$ is of class  $C^{2}$ and satisfies $f''(t) \leq 0$. If
\begin{equation}\label{radialnorm}
\| u \|_{C^{2}_{rad}} := \sup_{A(r_{1},r_{2})} |u | + \sup_{A(r_{1},r_{2})} |\partial_{r}u| + \sup_{A(r_{1},r_{2})} |\partial^{2}_{r}u| < +\infty,
\end{equation}
and $f'(u)\geq -B$, for some constant $B\geq 0$ satisfying
\begin{align}\label{Eq:BCondition}
0 \leq B < \left( \int_{r_1}^{r_2} \frac{\int_{r_1}^s e^{-\Phi(z)}\sigma^{m-1}(z)\ \textnormal{d}z}{e^{-\Phi(s)}\sigma^{m-1}(s)}\ \ds \right)^{-1}
\end{align}
then $u(r,\xi) = \hat u(r)$ is symmetric.
\end{theorem}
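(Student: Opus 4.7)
\emph{Strategy.} Since the annulus $A(r_1,r_2)$ carries in general no Killing field tangent to the $N$-slices, I replace the tangential derivatives $X(u)$ appearing in the proof of Theorem~\ref{th-weighed-Doupaigne} by the \emph{horizontal weighted Laplacian} $w := \Delta^N_\Gamma u$. The plan is to show $w \equiv 0$ on $A(r_1,r_2)$; combined with the parabolicity of $N_\Gamma$, this will force every $N$-slice of $u$ to be constant, whence $u = \hat u(r)$.

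\emph{Step 1: Schr\"odinger inequality for $w$.} Applying $\Delta^N_\Gamma$ to the equation, the commutation rule \eqref{Eq:CommutatorLaplacian} together with the chain-rule identity $\Delta^N_\Gamma f(u) = f'(u)\Delta^N_\Gamma u + f''(u)|\nabla^N u|^2_{g^N}$ and the concavity of $f$ yield
\[
(\Delta_\Psi - f'(u))\,w \;=\; f''(u)\,|\nabla^N u|^2_{g^N} \;\leq\; 0 \qquad \text{on } A(r_1,r_2).
\]
Since $u\equiv c_j$ on each $\{r_j\}\times N$, one has $\nabla^N u = 0$ and thus $w = 0$ on $\partial A$. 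Solving $\Delta_\Psi u = f(u)$ for $\tfrac{1}{\sigma^2}w$ and invoking the radial bound \eqref{radialnorm} also shows that $w$ is bounded on $A$.

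\emph{Step 2: Non-compact maximum principle gives $w \geq 0$.} Set $\mu(r) := e^{-\Phi(r)}\sigma^{m-1}(r)$ and
\[
h(r) := \int_{r_1}^{r}\frac{1}{\mu(s)}\int_{r_1}^{s}\mu(z)\,\dz\,\ds,
\]
so that a direct check yields $\Delta_\Psi h \equiv 1$. The radial function $\varphi(r) := 1 - B\,h(r)$ is therefore bounded between two positive constants on $[r_1,r_2]$ precisely when $B\,h(r_2)<1$, which is exactly hypothesis \eqref{Eq:BCondition}. Using $f'(u)\geq -B$,
\[
(\Delta_\Psi - f'(u))\varphi \;=\; -B - f'(u)(1-Bh) \;\leq\; -B + B(1-Bh) \;=\; -B^{2}h \;\leq\; 0.
\]
Finiteness of $\vol_\Gamma(N)$ yields $\vol_\Psi(A)<+\infty$, from which $A_\Psi$ is $\mathcal N$-parabolic. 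Proposition~\ref{prop:NonCompactPWMaxPrinc} applied to the bounded subsolution $-w$ (with $-w = 0$ on $\partial A$) now yields $w \geq 0$ on $A$.

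\emph{Step 3: Integral vanishing and radial conclusion.} Regard $Z_{(r,\xi)} := \nabla^N u(r,\xi) \in T_\xi N \subset T_{(r,\xi)}M$ as a vector field on $A$, tangent to the $N$-slices. A short computation in warped coordinates shows $\div^{M}_\Psi Z = \Delta^N_\Gamma u = w$, and $g(Z,\pm\partial_r) = 0$ on $\partial A$. A Caccioppoli-type energy estimate (test $\Delta_\Psi u = f(u)$ against $u\rho_k^2$ with exhausting cutoffs $\rho_k$ as in Theorem~\ref{Th:Gaffney}) produces $\int_A |\nabla u|^2\,\dv_\Psi < +\infty$, so that $|Z|\in L^1(A,\dv_\Psi)$ by Cauchy--Schwarz and finite volume. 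Theorem~\ref{Th:Gaffney} applied to $Z$ then yields
\[
\int_A w\,\dv_\Psi \;=\; \int_{\partial A} g(Z,\vec\nu)\,\da_\Psi \;=\; 0.
\]
Combined with $w\geq 0$ this forces $w\equiv 0$. Consequently, for every $r\in[r_1,r_2]$, the slice $u(r,\cdot)$ is a bounded $\Gamma$-harmonic function on the complete finite-$\Gamma$-volume manifold $(N,g^N,\dv^N_{\Gamma})$; by the Karp-type Liouville theorem for parabolic weighted manifolds, $u(r,\cdot)$ is constant, and hence $u = \hat u(r)$ is radial. The main technical obstacle is the energy bound, whose justification requires a careful compact exhaustion of $A$ in the $\xi$-direction exploiting the finite $\Gamma$-volume of $N$ together with the $L^\infty$-bound on $u$.
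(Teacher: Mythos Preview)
Your proof is correct and follows the same overall architecture as the paper's: set $w=\Delta^{N}_{\Gamma}u$, derive the differential inequality $\mathcal L w\le 0$ with $w=0$ on $\partial A$, build a radial barrier $\varphi$ from condition~\eqref{Eq:BCondition}, apply the non-compact maximum principle to get $w\ge 0$, show $\int_A w\,\dv_\Psi=0$, and conclude by parabolicity of $N_\Gamma$. Your barrier $\varphi=1-Bh$ is in fact slightly cleaner than the paper's (which carries an extra linear term).

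The one genuine point of divergence is Step~3. The paper obtains $\int_A w\,\dv_\Psi=0$ slice by slice (Lemma~\ref{Lem:IntegralN}): for each fixed $\bar r$ the function $u(\bar r,\cdot)$ is bounded on the parabolic weighted manifold $N_\Gamma$ and $\Delta^{N}_{\Gamma}u(\bar r,\cdot)\in L^{1}(N,\dv_\Gamma)$, so the global Stokes theorem on $N$ (Theorem~\ref{th-parabolic}.b) gives $\int_{N}\Delta^{N}_{\Gamma}u(\bar r,\cdot)\,\dv_\Gamma=0$ directly; one then integrates in $r$. You instead work globally on $A$, introducing $Z=\nabla^{N}u$, checking $\div^{M}_{\Psi}Z=w$, and invoking Gaffney on $A$; this forces you to first prove the energy bound $\nabla u\in L^{2}(A,\dv_\Psi)$ via a Caccioppoli argument. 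That argument does go through (boundedness of $u$, $f(u)$, $\partial_{r}u$, finite $\Psi$-volume, and $\|\nabla\rho_{k}\|_\infty\to 0$ suffice), but it is extra work: the paper's slice-wise route avoids any energy estimate because the Grigor'yan--Masamune Stokes theorem on a parabolic manifold only needs $u\in L^{\infty}$ and $\Delta^{N}_{\Gamma}u\in L^{1}$, both of which you already have from Step~1.
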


\begin{remark}
Under the additional assumption $[\Delta_\Psi , \Delta^N_\Gamma](u)\leq 0$, this symmetry result can be easily generalized to every smooth weight $\Psi(r,\xi)$ satisfying the condition $\partial_r \Psi \in L^\infty(A(r_1,r_2))$. This is needed to ensure the existence of the function $\varphi$ claimed in Theorem \ref{Lem:ExistencePhi}. Clearly, in this case condition \eqref{Eq:BCondition} need to be slightly modified.
\end{remark}

\begin{remark}\label{rem-th-noKilling}
Some observations on the statement of Theorem \ref{th-annuli} are in order.\smallskip

 \noindent a) Obviously, if $N$ is compact,  assumption \eqref{radialnorm} is automatically satisfied. In this case, if there exists at least one symmetric solution $u$ of \eqref{dir1}, then each solution must coincide with the symmetric one, thanks to the uniqueness result contained in Theorem \ref{Th:NoneEuclideanDupaigne}. In the opposite direction, the symmetry result could be useful in establishing whether a symmetric solution actually exists. In fact, it is easy to choose a non-linearity $f(t)$ in such a way standard methods to construct a symmetric, say one-dimensional, solution cannot be applied.\smallskip

\noindent b) In the non-compact case, the boundedness assumption \eqref{radialnorm} of Theorem \ref{th-annuli} is apparently weaker then the $W^{1,2}$ global regularity needed in Theorem \ref{Th:NoneEuclideanDupaigne}. Thus, we do not know whether or not there is some global uniqueness of the (stable) solution. \smallskip

\smallskip

\noindent c)  Condition \eqref{Eq:BCondition} is clearly satisfied if $f'(u) \geq -B=0$. As a matter of fact, it will be clear from Lemma \ref{Lem:ExistencePhi} that there is a (strong) stability condition hidden in \eqref{Eq:BCondition}. Indeed, the validity of \eqref{Eq:BCondition}  implies the existence of a smooth solution $\vp >0$ of $\CL \vp \leq 0$ on $\inte M$, where $\CL = \Delta_\Psi - f'(u)$ is the stability operator. According to a classical result independently due to Fischer-Colbrie and Schoen, \cite{FCS}, and to Moss and Piepenbrink, \cite{MP} (see also \cite{De}), we have that $\lambda_{1}^{-\CL}(A(r_{1},r_{2})) \geq 0$. But in fact more is true because we can even obtain that $C^{-1}\leq \vp \leq C$ on the whole $\bar{A}(r_{1},r_{2})$.\smallskip

\noindent d) It would be interesting to note that condition \eqref{Eq:BCondition} can be written as
\begin{align*}
0 \leq \int_{r_1}^{r_2} \frac{\vol_\Psi A(r_1,s)}{\area_\Psi \Sigma_s}\ \ds < \frac{1}{B}
\end{align*}
where the integrand is the inverse of the Cheeger isoperimetric quotient.\smallskip

\item e) From a different perspective, symmetry on Riemannian (warped) products have been previously investigated in \cite{FMV} by A. Farina, L. Mari and E. Valdinoci. Their viewpoint is that of the De Giorgi conjecture where, a-priori, it is not known along which direction the stable solution of the Allen-Cahn type equation is symmetric. Thus, their result takes the form of a geometric splitting of the underlying space. See also \cite{BS} by M. Batista and I.J. Santos for the case of weighted manifolds and negative Ricci lower bounds.
\end{remark}

As a concrete example where to set Theorem \ref{th-annuli} in, we can consider the weighted slabs of Example \ref{Ex:IsoparametricGaussian}, thus obtaining the following

\begin{corollary}\label{Cor:GaussianSlabs}
Let $\bar{A}(r_1,r_2)=[r_1,r_2]\times \mathbb{R}^{n-1} \subset \mathbb{G}^n=\mathbb{R}^n_\Psi$ be a slab in the Gaussian space, whose weight writes as $\Psi(r,\xi)=\frac{r^2}{2}+\frac{|\xi|^2}{2}$.\smallskip
 
Let $u \in C^{4}(\bar {A}(r_{1},r_{2}))$ be a solution of  the Dirichlet problem
\[
\begin{cases}
 \Delta_\Psi u = f (u) & \text{in }A(r_{1},r_{2}) \\
 u \equiv c_{1} & \text{on }\{ r_{1} \}\times N \\
 u \equiv c_{2} &\text{on }\{r_{2}\} \times N.
\end{cases}
\]
where $c_{j} \in \rr$ are given constants and the function $f(t)$ is of class  $C^{2}$ and satisfies $f''(t) \leq 0$. If
\[
\| u \|_{C^{2}_{rad}}< +\infty
\]
and $f'(u)\geq -B$, for some constant $B\geq 0$ satisfying
\begin{align*}
0 \leq B < \left( \int_{r_1}^{r_2} \frac{\int_{r_1}^s e^{-z^2/2}\ \textnormal{d}z}{e^{-s^2/2}}\ \ds \right)^{-1}
\end{align*}
then $u(r,\xi) = \hat u(r)$ is symmetric.
\end{corollary}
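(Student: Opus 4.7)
My plan is to convert the symmetry problem into a one-sided inequality for the fiberwise Laplacian of $u$ and then combine a non-compact maximum principle on $A(r_1,r_2)$ with parabolicity on each fiber. Set $v := \Delta^N_\Gamma u$. Using the commutation rule $[\Delta_\Psi,\Delta^N_\Gamma]=0$ from \eqref{Eq:CommutatorLaplacian} together with the fiberwise identity $\Delta^N_\Gamma f(u) = f'(u)\Delta^N_\Gamma u + f''(u)|\nabla^N u|^2_{g^N}$, I obtain
$$
\Delta_\Psi v - f'(u)\,v \,=\, f''(u)\,|\nabla^N u|^2_{g^N} \,\leq\, 0,
$$
that is, $\CL v \leq 0$ for the stability operator $\CL := \Delta_\Psi - f'(u)$. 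Since $u \equiv c_j$ on $\{r_j\}\times N$, one also has $v \equiv 0$ on $\partial A(r_1,r_2)$. Boundedness of $v$ follows by isolating $\sigma^{-2}\Delta^N_\Gamma u$ in the warped-product decomposition of $\Delta_\Psi u = f(u)$, which expresses $v$ as a bounded combination of $f(u)$ and the radial derivatives of $u$, all under control on the compact interval $[r_1,r_2]$ by the hypothesis $\|u\|_{C^2_{rad}}<+\infty$.

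\noindent\textbf{The radial barrier.} To apply Proposition~\ref{prop:NonCompactPWMaxPrinc} to $-v$ I need a smooth positive $\varphi$, bounded between two positive constants, satisfying $\CL\varphi \leq 0$. Set $w(r) := e^{-\Phi(r)}\sigma^{m-1}(r)$ and $W(s) := \int_{r_1}^s w(z)\,\dz$, and try the radial ansatz
$$
\varphi(r) \,:=\, 1 - B\int_{r_1}^r \frac{W(s)}{w(s)}\,\ds.
$$
Because $\Delta_\Psi\varphi = \tfrac{1}{w}(w\varphi')'$ for a radial function and $w\varphi' = -BW$, a direct calculation yields $\Delta_\Psi\varphi \equiv -B$. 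Combined with $\varphi \leq 1$ and $f'(u) \geq -B$ this gives $\CL\varphi = -B - f'(u)\varphi \leq -B + B\varphi \leq 0$, while the smallness condition \eqref{Eq:BCondition} is exactly what forces $\varphi(r_2)>0$, so $0<\varphi(r_2)\leq \varphi \leq 1$.

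\noindent\textbf{Maximum principle and fiberwise parabolicity.} The annulus $\bar A(r_1,r_2)$ is complete and has finite $\Psi$-volume $\vol_\Gamma(N)\int_{r_1}^{r_2}w(r)\,\dr<+\infty$, hence is $\CN$-parabolic. Applying Proposition~\ref{prop:NonCompactPWMaxPrinc} to $-v$ --- which satisfies $\CL(-v)\geq 0$, $-v\equiv 0$ on $\partial A$, and $\sup(-v)<+\infty$ --- gives $v \geq 0$ on $A(r_1,r_2)$. Consequently, for each fixed $r$, the bounded function $\xi\mapsto u(r,\xi)$ is $\Gamma$-subharmonic on the complete weighted manifold $(N,g^N,\dv_\Gamma)$; but the latter is $\Gamma$-parabolic by finite total $\Gamma$-volume, so $u(r,\cdot)$ must be constant and $u(r,\xi)=\hat u(r)$, as claimed. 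I expect the main obstacle to be the barrier construction of the middle step: the integral condition \eqref{Eq:BCondition} is really a hidden quantitative strong-stability hypothesis (compare the Fischer-Colbrie--Schoen/Moss--Piepenbrink criterion), and the explicit $\varphi$ is what converts this spectral information into a bounded comparison function usable in the non-compact maximum principle. Concavity enters only once, but crucially, through the sign of $f''(u)|\nabla^N u|^2_{g^N}$, and this is precisely the sign that parabolicity of $N$ needs in order to close the argument.
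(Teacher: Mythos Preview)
Your proposal is correct and follows essentially the same route as the paper: the paper proves this corollary by a one-line appeal to Theorem~\ref{th-annuli}, and you have reproduced the proof of that theorem in the Gaussian setting (differential inequality for $v=\Delta^N_\Gamma u$, radial barrier, non-compact maximum principle, parabolicity of the fibre). Two minor differences worth noting: your barrier $\varphi(r)=1-B\int_{r_1}^r W/w$ is the simpler special case $b=0$ of the paper's construction in Lemma~\ref{Lem:ExistencePhi}, and in the final step you pass directly from $v\geq 0$ to ``$u(r,\cdot)$ bounded $\Gamma$-subharmonic on parabolic $N_\Gamma$, hence constant'', whereas the paper first integrates to get $\int v\,\dv_\Psi=0$ and hence $v\equiv 0$ before invoking parabolicity --- both closures are valid.
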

\begin{proof}
Thanks to the presence of the Gaussian weight, the leaves of the foliation have finite volume. Thus we can apply Theorem \ref{th-annuli}, obtaining the claim. 
\end{proof}

Observe that this is not true for the same domains in Euclidean space: this fact points out how the presence of a weight that deforms the Riemannian measure may strongly influence the structure of solutions of the equation $\Delta u=f(u)$.

A second important consequence of Theorem \ref{th-annuli} concerns weights with vanishing tangential component.

\begin{corollary}\label{Cor:RadialWeights}
Let $M_\Psi = (I \times_{\s} N)_\Psi$ where $\Psi(r,\xi)=\hat{\Psi}(r)$ is a symmetric smooth function and $(N,g^{N})$ is a complete (possibly non-compact), connected, $(m-1)$-dimensional Riemannian manifold with finite volume $\vol(N)<+\infty$.\smallskip
 
Let $u \in C^{4}(\bar {A}(r_{1},r_{2}))$ be a solution of  the Dirichlet problem
\[
\begin{cases}
 \Delta_\Psi u = f (u) & \text{in }A(r_{1},r_{2}) \\
 u \equiv c_{1} & \text{on }\{ r_{1} \}\times N \\
 u \equiv c_{2} &\text{on }\{r_{2}\} \times N.
\end{cases}
\]
where $c_{j} \in \rr$ are given constants and the function $f(t)$ is of class  $C^{2}$ and satisfies $f''(t) \leq 0$. If
\[
\| u \|_{C^{2}_{rad}} < +\infty
\]
and $f'(u)\geq -B$, for some constant $B\geq 0$ satisfying
\begin{align*}
0 \leq B < \left( \int_{r_1}^{r_2} \frac{\int_{r_1}^s e^{-\Psi(z)}\sigma^{m-1}(z)\ \dz}{e^{-\Psi(s)}\sigma^{m-1}(s)}\ \ds \right)^{-1}
\end{align*}
then $u(r,\xi) = \hat u(r)$ is symmetric.
\end{corollary}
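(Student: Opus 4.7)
The plan is to derive this statement as an immediate specialization of Theorem \ref{th-annuli} rather than redoing any analysis. The structural hypothesis \eqref{Eq:WeightForm} on the weight, namely the splitting $\Psi(r,\xi) = \Phi(r) + \Gamma(\xi)$, is satisfied in the present situation by choosing $\Phi(r) = \hat{\Psi}(r)$ and $\Gamma(\xi) \equiv 0$. With this choice, the weighted fiber Laplacian $\Delta^{N}_{\Gamma}$ collapses to the ordinary Laplacian $\Delta^{N}$, while $e^{-\Gamma} \equiv 1$, so the $\Gamma$-weighted fiber volume coincides with the Riemannian one,
\[
\vol_{\Gamma}(N) = \vol(N) < +\infty,
\]
by the finite-volume hypothesis on $N$. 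Thus the ambient space $M_{\Psi}$ in the corollary fits within the framework in which Theorem \ref{th-annuli} is stated.

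Next I would check that the remaining assumptions transfer verbatim. The regularity $u \in C^{4}(\bar{A}(r_{1},r_{2}))$, the equation, the constant Dirichlet data on each boundary component, the uniform radial bound $\| u \|_{C^{2}_{rad}} < +\infty$, and the concavity $f''(t) \leq 0$ are identical in the two statements. Finally, under the identification $\Phi = \hat{\Psi} = \Psi$, the quantitative threshold on $B$ imposed by Theorem \ref{th-annuli},
\[
0 \leq B < \left( \int_{r_1}^{r_2} \frac{\int_{r_1}^s e^{-\Phi(z)}\sigma^{m-1}(z)\, \dz}{e^{-\Phi(s)}\sigma^{m-1}(s)}\, \ds \right)^{-1},
\]
becomes literally the bound stated in the corollary. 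No translation of constants is needed.

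Consequently, Theorem \ref{th-annuli} applies and yields $u(r,\xi) = \hat{u}(r)$, which is precisely the claimed symmetry. Since the corollary is a direct specialization, there is no substantive obstacle; the only conceptual point worth highlighting is that a purely radial weight trivially satisfies the tangential--radial splitting, and the absence of any tangential component of $\Psi$ is exactly what reduces the Theorem \ref{th-annuli} threshold on $B$ to the form stated here. The nontrivial analytic work (the construction of a two-sided bounded solution $\varphi > 0$ of $\mathcal{L}\varphi \leq 0$ encoded in \eqref{Eq:BCondition}, the non-compact maximum principle, and the application of the commutation rule $[\Delta_{\Psi},\Delta^{N}_{\Gamma}]=0$) is inherited from Theorem \ref{th-annuli} and need not be repeated.
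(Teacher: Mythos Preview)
Your proposal is correct and matches the paper's approach: the paper states this corollary as a direct consequence of Theorem \ref{th-annuli} without even supplying a proof, and your specialization $\Phi = \hat\Psi$, $\Gamma \equiv 0$ is exactly the intended reduction.
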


%%%%%%%%%%%%%%%%%%%%%%%%%%%%%%%%%%%%%%%%%%%%%%%%%%%%%%%%%%%%

\subsection{Some preliminary lemmas}\label{section-tools-potential} We have already mentioned that the notion of $\CN$-parabolicity, introduced in Section \ref{Section:Parabolicity-NonCaompactMP}, is a kind of compactness from many viewpoints. The following result contains further instances.

\begin{theorem}\label{th-parabolic}
 Let $M_\Psi$ be a weighted Riemannian manifold with (possibly empty) boundary $\partial M$.
 \begin{enumerate}

\item [a)] (Stokes theorem: general vector fields, \cite{IPS}) If $M_\Psi$ is $\CN$-parabolic then, given a vector field $X$ satisfying $|X| \in L^{2}(M, \dv_\Psi)$, $g(X, \vec\nu) \in L^{1}(\partial M,d\textnormal{a}_\Psi)$, $\div_\Psi (X) \in L^{1}(M, \dv_\Psi)$, it holds
\[
\int_{M}\div_\Psi (X)\ \dv_\Psi= \int_{\partial M}g(X,\vec \nu)\ \da_\Psi.
\]
\item [b)](Stokes theorem: gradient vector fields and no boundary, \cite[Prop. 3.1]{GM}) If $M_\Psi$ is parabolic and $\partial M = \emptyset$ then, given $u \in W^{1,2}_{loc}(M, \dv_\Psi)$ satisfying $u \in L^{\infty}(M,\dv_\Psi)$ and $\Delta_\Psi u \in L^{1}(M,\dv_\Psi)$, it holds
\[
\int_{M} \Delta_\Psi u\ \dv_\Psi =0.
\]
\item [c)](Volume growth, \cite{Gr}) Assume that $M_\Psi$ is complete(!) and that $\frac{R}{\vol_\Psi B_{R}(o)} \not\in L^{1}(+\infty)$ for some (any) $o \in \inte M$. Then $M_\Psi$ is $\CN$-parabolic.
\end{enumerate}
\end{theorem}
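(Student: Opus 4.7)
The three assertions share a common potential-theoretic engine: the capacitary characterization of $\CN$-parabolicity, namely that $M_\Psi$ is $\CN$-parabolic iff there is a sequence $\{\rho_k\}\subset\Lip_c(M)$ with $0\le\rho_k\le 1$, $\rho_k\nearrow 1$ pointwise on $M$, and $\|\nabla\rho_k\|_{L^2(M,\dv_\Psi)}\to 0$ (the cutoffs need not vanish on $\partial M$, in accordance with the Neumann nature of the definition). I would take this equivalence as starting point and reduce each of the three assertions to a cutoff argument, proving them in the order (a), (b), (c).

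For (a), the plan is to multiply $X$ by $\rho_k$ and apply the classical $\Psi$-divergence theorem to the compactly supported vector field $\rho_k X$, getting
\[
\int_M \rho_k\div_\Psi X\ \dv_\Psi \;+\; \int_M g(\nabla\rho_k,X)\ \dv_\Psi \;=\; \int_{\partial M}\rho_k\, g(X,\vec\nu)\ \da_\Psi.
\]
The first and third terms pass to $\int_M\div_\Psi X\ \dv_\Psi$ and $\int_{\partial M}g(X,\vec\nu)\ \da_\Psi$ by dominated convergence, with majorants $|\div_\Psi X|\in L^1(M,\dv_\Psi)$ and $|g(X,\vec\nu)|\in L^1(\partial M,\da_\Psi)$ provided by hypothesis. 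The middle remainder is controlled by Cauchy--Schwarz,
\[
\Bigl|\int_M g(\nabla\rho_k,X)\ \dv_\Psi\Bigr|\le \|\nabla\rho_k\|_{L^2}\,\||X|\|_{L^2}\longrightarrow 0,
\]
using $|X|\in L^2(M,\dv_\Psi)$.

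For (b), a direct invocation of (a) with $X=\nabla u$ is blocked by the lack of an a priori $L^2$-bound on $\nabla u$. My plan is to recover that bound via a Caccioppoli-type estimate. Setting $h=\Delta_\Psi u\in L^1$ and testing against $u\rho_k^2\in W^{1,2}_c(M)$, integration by parts gives
\[
\int_M\rho_k^2|\nabla u|^2\ \dv_\Psi \;=\; -\int_M u\rho_k^2 h\ \dv_\Psi \;-\; 2\int_M u\rho_k\, g(\nabla\rho_k,\nabla u)\ \dv_\Psi.
\]
Using $|u|\le\|u\|_\infty$, applying Cauchy--Schwarz to the last term and absorbing the resulting $\sqrt{\int\rho_k^2|\nabla u|^2}$ factor on the left produces a uniform bound $\int_M\rho_k^2|\nabla u|^2\ \dv_\Psi\le C$. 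Monotone convergence then yields $|\nabla u|\in L^2(M,\dv_\Psi)$, and since $\partial M=\emptyset$, assertion (a) applied with $X=\nabla u$ delivers $\int_M\Delta_\Psi u\ \dv_\Psi=0$.

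For (c), I would adopt Grigor'yan's volume-test strategy, reducing the existence of the cutoffs $\{\rho_k\}$ to a one-dimensional construction. Fix $o\in\inte M$, set $V_\Psi(R)=\vol_\Psi B_R(o)$, and take $\rho_k(x)=f_k(\dist(x,o))$ for a Lipschitz profile $f_k:[0,\infty)\to[0,1]$ with $f_k\equiv 1$ on $[0,k]$ and compactly supported. Completeness enters through Hopf--Rinow to ensure that closed distance balls are compact, so that such $\rho_k$ genuinely lie in $\Lip_c(M)$. The coarea formula rewrites $\int_M|\nabla\rho_k|^2\ \dv_\Psi$ as a weighted one-dimensional energy of $f_k'$; a Cauchy--Schwarz inequality optimised on annuli chosen in accordance with the doubling behaviour of $V_\Psi$ shows that this energy can be made arbitrarily small precisely when $R/V_\Psi(R)\notin L^1(+\infty)$. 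The main obstacle I expect lies exactly here: the explicit construction of the profile $f_k$, and its rearrangement so that the hypothesis on $R/V_\Psi(R)$ controls the resulting energy, is classically delicate and must be robust enough to tolerate both the possible irregularity of $V_\Psi$ and the presence of $\partial M$ inside the balls.
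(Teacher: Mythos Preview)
The paper does not supply a proof of this theorem: it is stated as a list of known results, each item carrying a citation to the literature (\cite{IPS}, \cite{GM}, \cite{Gr}) where the proofs can be found. There is therefore nothing in the paper to compare your proposal against.

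That said, your sketch is sound and tracks the standard arguments in those references. The capacitary characterisation of $\CN$-parabolicity via cutoffs with small $L^2$ gradient is exactly the right engine; your proof of (a) is the usual one, and your Caccioppoli step in (b) correctly extracts $|\nabla u|\in L^2$ so that (a) applies. For (c), your outline captures Grigor'yan's strategy; the honest acknowledgement that constructing the profile $f_k$ is the delicate part is appropriate, and in the boundary case one should note that the distance function from an interior point is still $1$-Lipschitz on all of $M$ (boundary included), so the construction goes through without essential change.
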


Keeping the notation and the assumptions of Theorem \ref{th-annuli}, the above potential theoretic tools  enable us to deduce some useful preliminary properties of the $\Psi$-isoparametric domain $\bar A(r_{1},r_{2})$ and of the solution $u$.\smallskip

In view of the next Lemma, recall that $N_\Gamma$ is complete weighted manifold with $\partial N = \emptyset$ and $\vol_\Gamma(N)<+ \infty$.

\begin{lemma}\label{Lem:Parabolicity}
The following hold.
\begin{enumerate}
 \item [i)]  $N_\Gamma$ is parabolic;
 \item [ii)] The closed annulus $\bar A(r_{1},r_{2})_\Psi$ endowed with the weight and the warped product metric inherited from $M_\Psi$ is a weighted $\CN$-parabolic manifold  with  $\partial \bar {A}(r_{1},r_{2}) \not=\emptyset$.
\end{enumerate}
\end{lemma}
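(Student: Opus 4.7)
Both assertions will follow from the volume growth criterion for $\mathcal{N}$-parabolicity, Theorem \ref{th-parabolic}(c), once the relevant completeness and finite-volume properties are verified. The plan is to treat (i) and (ii) in parallel, since the argument is essentially the same in each case.

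For (i), I would first record that $N_{\Gamma}$ has $\partial N = \emptyset$, and it is complete by hypothesis. The crucial observation is that $\vol_{\Gamma}(N) < +\infty$, hence for any fixed basepoint $o \in N$ and any $R>0$,
\[
\vol_{\Gamma} B_{R}^{N}(o) \leq \vol_{\Gamma}(N) < +\infty.
\]
Therefore
\[
\frac{R}{\vol_{\Gamma} B_{R}^{N}(o)} \geq \frac{R}{\vol_{\Gamma}(N)},
\]
and the right-hand side is obviously not integrable at $+\infty$. Theorem \ref{th-parabolic}(c) then yields that $N_{\Gamma}$ is $\mathcal{N}$-parabolic, which in the boundaryless case is the usual notion of parabolicity.

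For (ii), the strategy is identical once the two hypotheses of the volume growth criterion are in place. First I would observe that $\bar A(r_{1},r_{2}) = [r_{1},r_{2}] \times N$, endowed with the warped product metric $g = dr \otimes dr + \s^{2}(r) g^{N}$, is a complete Riemannian manifold with smooth (non-empty) boundary $\{r_{1}\}\times N \cup \{r_{2}\}\times N$: indeed, $[r_{1},r_{2}]$ is compact and $\s$ is smooth and strictly positive on it, so $\s$ is bounded above and below by positive constants, which reduces the completeness of the warped product to that of the product metric on $[r_{1},r_{2}]\times N$; this in turn holds because $N$ is complete. Second, the coarea formula already recalled in the text gives
\[
\vol_{\Psi}\bigl(\bar A(r_{1},r_{2})\bigr) = \vol_{\Gamma}(N) \int_{r_{1}}^{r_{2}} e^{-\Phi(r)} \s^{m-1}(r)\, \dr < +\infty,
\]
because $\vol_{\Gamma}(N) < +\infty$ and the $r$-integrand is continuous on the compact interval $[r_{1},r_{2}]$. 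Exactly as in part (i), fixing an interior point $o \in \inte\bar A(r_{1},r_{2})$, the intrinsic $\Psi$-volume of any metric ball in $\bar A(r_{1},r_{2})_{\Psi}$ is bounded by the finite total $\Psi$-volume, so $R/\vol_{\Psi} B_{R}(o) \notin L^{1}(+\infty)$ and Theorem \ref{th-parabolic}(c) gives the desired $\mathcal{N}$-parabolicity.

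The main (very modest) subtlety I anticipate is to confirm that the volume growth criterion applies in the boundary case — that is, that Theorem \ref{th-parabolic}(c) is indeed stated or used in the form needed here — and to double-check the intrinsic completeness of $\bar A(r_{1},r_{2})$ as a manifold with boundary, rather than simply inheriting it set-theoretically from the ambient $M$. Beyond these routine checks, no real obstacle is expected.
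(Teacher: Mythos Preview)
Your proposal is correct and follows essentially the same approach as the paper: both parts are deduced from the volume growth criterion, Theorem \ref{th-parabolic}(c), after checking completeness and finite weighted volume. The paper packages part (ii) via the two-sided metric comparison $dr\otimes dr + \alpha\, g^{N} \leq g \leq dr\otimes dr + \beta\, g^{N}$ with $\alpha=\min_{[r_1,r_2]}\sigma$ and $\beta=\max_{[r_1,r_2]}\sigma$ (the left inequality yielding completeness, the right one finite $\Psi$-volume), whereas you invoke the same boundedness of $\sigma$ for completeness and the coarea formula directly for the volume---but this is only a cosmetic difference.
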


\begin{proof}
 i) is a direct consequence of Theorem \ref{th-parabolic}.c. Concerning ii), let $\a = \min_{[r_{1},r_{2}]} \s(r) >0$ and $\b =  \max_{[r_{1},r_{2}]} \s(r) <+\infty$ so that,  on $\bar A(r_{1},r_{2})$,
 \[
 dr \otimes dr + \a \cdot g^{N} \leq g \leq dr \otimes dr + \b  \cdot g^{N}
 \]
in the sense of quadratic forms. Since the LHS metric is complete and the RHS metric has finite $\Psi$-volume the conclusion follows again from  Theorem \ref{th-parabolic}.c.
\end{proof}

For the next Lemma recall also that  $\| u \|_{C^{2}_{rad}} < +\infty$.

\begin{lemma}\label{Lem:IntegralN}
We have
\[
\Delta^{N}_\Gamma u \in L^{\infty}(A(r_{1},r_{2})).
\]
Moreover, for every fixed $\bar r \in [r_{1},r_{2}]$,
\[
\Delta^{N}_\Gamma u\, (\bar r,\cdot) \in L^{1}(N, \textnormal{dv}_\Gamma)
\]
and
\[
\int_{N} \Delta^{N}_\Gamma u  (\bar r,\xi) \, \dv_\Gamma =0.
\]
\end{lemma}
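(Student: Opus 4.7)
The plan rests on the warped product splitting of the weighted Laplacian already displayed in the paper, namely
$$\Delta_\Psi u = \partial_r^2 u + \Big((m-1)\tfrac{\sigma'}{\sigma} - \Phi'\Big) \partial_r u + \tfrac{1}{\sigma^2} \Delta^N_\Gamma u.$$
Since $u$ solves \eqref{dir1}, I would solve this identity for the tangential piece to get
$$\Delta^N_\Gamma u = \sigma^2 \Big[ f(u) - \partial_r^2 u - \Big((m-1)\tfrac{\sigma'}{\sigma} - \Phi'\Big) \partial_r u \Big],$$
and then read off each claim from this formula combined with the standing hypotheses.

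For part (i), on the compact interval $[r_1,r_2]$ the smooth functions $\sigma,\sigma',\Phi'$ are bounded, with $\sigma$ bounded away from $0$. The assumption $\|u\|_{C^2_{rad}} < +\infty$ makes $u, \partial_r u, \partial_r^2 u$ bounded on $A(r_1,r_2)$, while $f \in C^2$ applied to the bounded function $u$ yields $f(u) \in L^\infty$. Substituting into the displayed formula gives $\Delta^N_\Gamma u \in L^\infty(A(r_1,r_2))$. For part (ii), fixing any $\bar r \in [r_1,r_2]$, the function $\xi \mapsto \Delta^N_\Gamma u(\bar r,\xi)$ is bounded on $N$; because $\vol_\Gamma(N) < +\infty$ by hypothesis, $L^\infty(N,\dv_\Gamma) \subset L^1(N,\dv_\Gamma)$, which is the claim.

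For part (iii), the main step is to apply the integrability-version of Stokes' theorem, Theorem \ref{th-parabolic}.b, to the slice $v(\xi) := u(\bar r, \xi)$ on the weighted manifold $N_\Gamma$. I would check the four hypotheses in turn: $N_\Gamma$ is parabolic without boundary by Lemma \ref{Lem:Parabolicity}.i together with the standing assumption $\partial N = \emptyset$; $v$ is smooth, hence in $W^{1,2}_{\mathrm{loc}}(N,\dv_\Gamma)$; $v$ is bounded by $\|u\|_{C^2_{rad}} < +\infty$; and $\Delta^N_\Gamma v \in L^1(N,\dv_\Gamma)$ is exactly part (ii). Theorem \ref{th-parabolic}.b then delivers
$$\int_N \Delta^N_\Gamma u(\bar r,\xi)\ \dv_\Gamma = \int_N \Delta^N_\Gamma v\ \dv_\Gamma = 0.$$

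There is no real obstacle here: everything has been set up by the previous results, in particular the parabolicity of $N_\Gamma$ coming from the finite $\Gamma$-volume assumption via Grigor'yan's test (Theorem \ref{th-parabolic}.c). The only point that merits care is that the lemma asks for the identity at \emph{every} $\bar r$, not just almost every $\bar r$; this is automatic because $u \in C^4$ makes the slicing pointwise meaningful and the argument applies uniformly to each $\bar r \in [r_1,r_2]$.
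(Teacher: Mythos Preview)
Your proposal is correct and follows essentially the same approach as the paper: both solve the warped-product formula for $\Delta^N_\Gamma u$ in terms of $f(u)$ and the radial derivatives, deduce the $L^\infty$ bound from $\|u\|_{C^2_{rad}}<+\infty$ and the boundedness of $\sigma,\sigma',\Phi'$ on $[r_1,r_2]$, use finite $\Gamma$-volume to get $L^1$, and then invoke Theorem~\ref{th-parabolic}.b on the parabolic manifold $N_\Gamma$ (via Lemma~\ref{Lem:Parabolicity}.i) to obtain the vanishing integral. Your extra remark about the identity holding at every $\bar r$ is a nice touch not made explicit in the paper.
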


\begin{proof}
 Using the fact that $\Delta_\Psi u = f(u)$ we can write
 \[
 \Delta^{N}_\Gamma u = \s^2 f(u) - \s^2 \partial^{2}_{r}u -\Big((m-1) \s \s'- \Phi'\s^2\Big) \partial_{r}u.
 \]
 From this expression, since $\sup_{[r_{1},r_{2}]}(\s+ |\s'|+|\Phi'|) < +\infty$, $\| u \|_{C^{2}_{rad}} < +\infty$ and, hence, $\sup_{A(r_{1},r_{2})}|f(u)|<+\infty$, we get
 \[
 \Delta^{N}_\Gamma u \in L^{\infty}(A(r_{1},r_{2})).
 \]
 In particular, for every $\bar r \in [r_{1},r_{2}]$,
 \[
 \Delta^{N}_\Gamma u(\bar r, \cdot) \in L^{\infty}(N).
 \]
 Recalling that $\vol_\Gamma(N) < +\infty$ it follows that  $\Delta^{N}_\Gamma u(\bar r, \cdot) \in L^{1}(N,\dv_\Gamma)$. Since  $u(\bar r,\cdot ) \in L^{\infty}(N)$ and $N_\Gamma$ is parabolic without boundary, by Theorem \ref{th-parabolic}.b we conclude that $\int_{N} \Delta^{N}_\Gamma u(\bar r,\xi)\ \dv_{\Gamma}(\xi) = 0$, as required.
\end{proof}

The previous Lemmas, stemming from potential theoretic considerations, will play a fundamental role in the proof of Theorem \ref{th-annuli}. Beside them, we shall also need the validity of the non-compact maximum principle from Proposition \ref{prop:NonCompactPWMaxPrinc}. This follows from the next

\begin{lemma}\label{Lem:ExistencePhi}
There exists a  function $\varphi\in C^2(A(r_{1},r_{2}))\cap C^{0}(\bar{A}(r_{1},r_{2}))$ satisfying condition \eqref{eq:NonCompactPhi} of Proposition \ref{prop:NonCompactPWMaxPrinc}, namely,
\[
\left\{
\begin{array}{ll}
\mathcal{L}\varphi \leq 0 & A(r_{1},r_{2})\\
\frac{1}{C} \leq \varphi \leq C & \bar{A}(r_{1},r_{2}),
\end{array}
\right.
\]
where, as usual, $\CL = \Delta_\Psi - f'(u)$ is the stability operator.
\end{lemma}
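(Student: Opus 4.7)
The natural idea is to look for a radial barrier of the form $\varphi(r,\xi)=\hat{\varphi}(r)$. On any such function the tangential contribution $\frac{1}{\sigma^{2}}\Delta^{N}_{\Gamma}\hat{\varphi}$ vanishes and the weighted Laplacian reduces, by a short manipulation, to the one-dimensional weighted operator
\[
\Delta_{\Psi}\hat{\varphi}(r)\;=\;\frac{1}{e^{-\Phi(r)}\sigma^{m-1}(r)}\Bigl(e^{-\Phi(r)}\sigma^{m-1}(r)\,\hat{\varphi}'(r)\Bigr)'.
\]
Thus the problem is reduced to finding a positive solution on $[r_{1},r_{2}]$ of an ODE inequality, with the tangential variables playing no role except through the lower bound $f'(u)\ge -B$ that is inherited pointwise.

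The plan is to pick the radial ansatz forcing $\Delta_{\Psi}\hat\varphi\equiv -c$ for a constant $c>0$ to be tuned. Prescribing $\hat{\varphi}(r_{1})=1$ and $\hat{\varphi}'(r_{1})=0$, two integrations of $(e^{-\Phi}\sigma^{m-1}\hat{\varphi}')'=-c\,e^{-\Phi}\sigma^{m-1}$ yield the explicit expression
\[
\hat{\varphi}(r)\;=\;1\;-\;c\int_{r_{1}}^{r}\frac{\int_{r_{1}}^{s} e^{-\Phi(z)}\sigma^{m-1}(z)\,\mathrm{d}z}{e^{-\Phi(s)}\sigma^{m-1}(s)}\,\mathrm{d}s.
\]
Denote by $I$ the integral $\int_{r_{1}}^{r_{2}}\frac{\int_{r_{1}}^{s} e^{-\Phi(z)}\sigma^{m-1}(z)\,\mathrm{d}z}{e^{-\Phi(s)}\sigma^{m-1}(s)}\,\mathrm{d}s$ appearing in the hypothesis \eqref{Eq:BCondition}. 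Clearly $\hat{\varphi}$ is smooth on $[r_{1},r_{2}]$, non-increasing (its derivative is non-positive), and $\hat{\varphi}(r_{2})=1-cI$; so $\hat{\varphi}$ is strictly positive throughout iff $cI<1$, in which case $1-cI\le \hat{\varphi}\le 1$.

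The core of the argument is the tuning of $c$ to make $\mathcal{L}\hat{\varphi}\le 0$. By construction and by the lower bound $f'(u)\ge -B$ we have
\[
\mathcal{L}\hat{\varphi}\;=\;-c-f'(u)\hat{\varphi}\;\le\;-c+B\hat{\varphi}\;\le\;-c+B,
\]
since $\hat{\varphi}\le 1$. Hence any choice $c\ge B$ makes $\mathcal{L}\hat{\varphi}\le 0$. Coupling this with the positivity requirement $cI<1$, one needs $B\le c<1/I$, which is possible precisely because assumption \eqref{Eq:BCondition} asserts $BI<1$. (If $B=0$ one may trivially take $\hat{\varphi}\equiv 1$.) With such a $c$, the function $\varphi(r,\xi):=\hat{\varphi}(r)$ is $C^{\infty}$ on $\bar{A}(r_{1},r_{2})$, satisfies $\frac{1}{C}\le \varphi \le C$ with $C:=1/(1-cI)$, and $\mathcal{L}\varphi\le 0$, as required.

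\textbf{Main obstacle.} The non-trivial point is not the construction itself but rather showing that the two constraints on the parameter $c$ are \emph{simultaneously} satisfiable. The lower bound $c\ge B$ is needed to dominate the worst possible value of $-f'(u)$, whereas the upper bound $c<1/I$ is needed to keep $\hat{\varphi}$ strictly positive throughout the slab. Their compatibility is equivalent, up to the limiting case, to the quantitative smallness condition $BI<1$; this is precisely the content of \eqref{Eq:BCondition}, and it is the reason for which such an (at first glance unusual) assumption appears in the statement of Theorem~\ref{th-annuli}.
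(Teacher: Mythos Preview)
Your proof is correct and follows essentially the same strategy as the paper: seek a radial barrier $\hat\varphi$ solving $\Delta_{\Psi}\hat\varphi=-\text{const}$, then use $0<\hat\varphi\le 1$ together with $-f'(u)\le B$ to absorb the zeroth-order term. The only cosmetic difference is in the parametrization: the paper fixes the constant to be $B$ and introduces a free initial slope $\hat\varphi'(r_{1})=b<0$ (producing an additional term $A(t)$ in the explicit formula), whereas you set $\hat\varphi'(r_{1})=0$ and instead let the constant $c\in[B,1/I)$ be the free parameter. Your choice is slightly cleaner, since it yields the single-integral formula that makes the role of hypothesis \eqref{Eq:BCondition} transparent; either way the compatibility of the positivity constraint with the supersolution constraint is exactly $BI<1$.
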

\begin{proof}
Let's start by considering the differential inequality $\big(\Delta_\Psi-f'(u)\big)\varphi \leq 0$ when applied to a symmetric function $\varphi(r,\xi)=\varphi(r)$, that is,
\begin{align*}
\varphi'' + \Big((m-1)\frac{\sigma'}{\sigma}-\Phi'\Big)\varphi'-f'(u)\leq 0 \quad  \textnormal{in}\ I=(r_1,r_2).
\end{align*}
Since $f'$ is continuous and $u$ is bounded, then there exists $B\geq 0$ such that
\begin{align*}
-f'(u) \leq B
\end{align*}
Imposing  condition \eqref{Eq:BCondition}, we get  the desired function as the solution of
\begin{align}\label{Eq:Varphi}
\left\{ \begin{array}{ll}
\varphi''+\Big((m-1)\frac{\sigma'}{\sigma}-\Phi'\Big) \varphi'+B=0 & \textnormal{in}\ I\\
\varphi(r_1)=1\\
\varphi'(r_1)=b<0
\end{array} \right.
\end{align}
for a suitable choice of $ b \in \rr$. Indeed, letting
\begin{align*}
& B(t)=B\int_{r_1}^t e^{\Phi(s)}\sigma^{1-m}(s) \int_{r_1}^s e^{-\Phi(z)}\sigma^{m-1}(z)\ \dz\ \ds \geq 0\\
& A(t)=b\ e^{-\Phi(r_1)}\ \sigma^{m-1}(r_1) \int_{r_1}^t e^{\Phi(s)}\sigma^{1-m}(s)\ \ds \leq 0,
\end{align*}
if \eqref{Eq:BCondition} is satisfied, then it is possible to choose $b<0$ such that
\begin{displaymath}
-1<A(r_2)-B(r_2)<0.
\end{displaymath}
It follows that the function
\begin{align*}
\varphi(t)=1+A(t)-B(t)
\end{align*}
is a positive and decreasing solution of \eqref{Eq:Varphi}. In particular, $\varphi$ is bounded above by $\varphi(r_1)=1$, so it clearly solves the differential inequality
\begin{align*}
\varphi''+\Big((m-1)\frac{\sigma'}{\sigma}-\Phi'\Big) \varphi' - f'(u) \varphi \leq \varphi''+\Big((m-1)\frac{\sigma'}{\sigma}-\Phi'\Big) \varphi'+B=0.
\end{align*}
The proof of the Lemma is completed.
\end{proof}

\subsection{Proof of Theorem \ref{th-annuli}}
 Let us define
 \[
 v (r,\xi)= \Delta^{N}_\Gamma u(r,\xi).
 \]
It is enough to show that, for every $\bar r \in [r_{1},r_{2}]$,
\[
\xi \mapsto v(\bar r ,\xi) \text{ is constant on } N.
\]
Indeed, if this is the case, then $u(\bar r, \cdot)$ is a bounded (sub / super) harmonic function on the parabolic weighted manifold $N_\Gamma$, therefore it must be constant on $N$. This is precisely what we have to prove.

Now, since $u$ is (locally) constant on the boundary $\partial A(r_{1},r_{2})$ then
 \[
 v = 0\quad \text{on } \partial A(r_{1},r_{2}).
 \]
On the other hand, using the commutation rule \eqref{Eq:CommutatorLaplacian}, the fact that $\Delta_\Psi u = f(u)$ and the properties of $f$ we see that
\begin{align*}
 \Delta_\Psi v &= \Delta^{N}_\Gamma f(u) \\
 &= \Delta^N f(u) - g^N(\nabla^{N} f(u), \nabla^{N} \Gamma)\\
 &= \div^{N}(\nabla^{N} f(u)) -  f'(u)\ g^N(\nabla^{N} u, \nabla^{N} \Gamma) \\
 &= \div^{N}(f'(u)\nabla^{N}u) - f'(u)\ g^N(\nabla^{N} u, \nabla^{N} \Gamma)\\
 &=f''(u) |\nabla^{N}u|^{2}_{N} + f'(u) \DeltaN u - f'(u)\ g^N(\nabla^{N} u, \nabla^{N} \Gamma)\\
 &\leq f'(u) \DeltaN u - f'(u)\ g^N(\nabla^{N} u, \nabla^{N} \Gamma)\\
 &= f'(u) v.
\end{align*}
Summarizing, the $C^{2}$ function $v$ solves
\[
\begin{cases}
 \Delta_\Psi (-v) \geq f'(u) (-v) & \text{in }A(r_{1},r_{2})\\
 (-v) = 0 & \text{on }\partial A(r_{1},r_{2}).
\end{cases}
\]
By Lemma \ref{Lem:ExistencePhi} we can apply the non-compact Protter-Weinberger maximum principle of Proposition \ref{prop:NonCompactPWMaxPrinc}, and we get
\begin{align*}
v\geq 0\ \textnormal{in}\ A(r_1,r_2).
\end{align*}
On the other hand, 
\begin{align*}
\int_{A(r_{1},r_{2})} v\, \dv_{\Psi} &= \int_{r_{1}}^{r_{2}} \left( \int_{\{t\}\times N} v(t,\xi)\ \dv_\Gamma(\xi) \right)  e^{-\Phi(t)} \s^{m-1}(t) \textnormal{d}t\\
&= \int_{r_{1}}^{r_{2}} \left( \int_{N} \Delta^N_\Gamma u (t,\xi)\ \dv_\Gamma(\xi) \right)  e^{-\Phi(t)} \s^{m-1}(t) \textnormal{d}t\\
&= 0
\end{align*}
where, for the last equality, we have used Lemma \ref{Lem:IntegralN}. As a consequence,
\[
v \equiv 0 \text{ on }A(r_{1},r_{2}),
\]
as required. The proof of the theorem is completed.

\subsection{Infinite annuli} 
Theorem \ref{th-annuli} can be easily generalized to the case of infinite annuli, under suitable assumptions that are trivially satisfied in the case of finite annuli.

To this end, consider $A(r_0, +\infty)=(r_0,+\infty)\times_\s N$ with $r_0 \in \mathbb{R}_{>0}$ and suppose that $\bar{A}(r_0,+\infty)$ is $\CN$-parabolic. If the warping function $\s$ is a bounded function with bounded derivative, then Lemma \ref{Lem:IntegralN} extends trivially to this setting. Moreover, if the function
\begin{align*}
\theta: s\mapsto \frac{\int_{r_0}^s e^{-\Phi(z)}\s^{m-1}(z)\ \dz}{e^{-\Phi(s)} \s^{m-1}(s)}
\end{align*}
is integrable over $(r_0,+\infty)$, then the proof of Lemma \ref{Lem:ExistencePhi} can be readapted, ensuring the existence of the function $\varphi$ and allowing the non-compact Maximum Principle of Theorem \ref{prop:NonCompactPWMaxPrinc} to hold.

In this way, the whole proof of Theorem \ref{th-annuli} can be retraced step by step also in the context of infinite annuli, obtaining the next

\begin{theorem}\label{th-infinite-annuli}
Let $M_\Psi = (\mathbb{R}_{\geq 0} \times_{\s} N)_\Psi$ where $(N,g^{N})$ is a complete (possibly non-compact), connected, $(m-1)$-dimensional Riemannian manifold with finite $\Gamma$-volume $\vol_{\Gamma}(N)<+\infty$ and $\s\in L^\infty(\mathbb{R}_{\geq 0})$ satisfies $\s'\in L^\infty(\mathbb{R}_{\geq 0})$. Suppose also that $\bar {A}(r_{0},+\infty)$ is a $\CN$-parabolic manifold. \smallskip
 
Let $u \in C^{4}(\bar {A}(r_{0},+\infty))$ be a solution of  the Dirichlet problem
\begin{equation}\label{dir1}
\begin{cases}
 \Delta_\Psi u = f (u) & \text{in }A(r_{0},+\infty) \\
 u \equiv c_{0} & \text{on }\{ r_{0} \}\times N.
\end{cases}
\end{equation}
where $c_{0} \in \rr$ is a given constant and the function $f(t)$ is of class  $C^{2}$ and satisfies $f''(t) \leq 0$. If 
\begin{align}
&\| u \|_{C^{2}_{rad}} < +\infty\\
& \Delta^N_\Gamma u \in L^1(N,\textnormal{dv}_\Gamma)\\
&\theta(s)= \frac{\int_{r_0}^s e^{-\Phi(z)}\s^{m-1}(z)\ \dz}{e^{-\Phi(s)} \s^{m-1}(s)} \in L^1(r_0,+\infty)
\end{align}
and $f'(u)\geq -B$, for some constant $B\geq 0$ satisfying
\begin{align}
0 \leq B < \left( \int_{r_0}^{+\infty} \theta(s)\ \ds \right)^{-1}
\end{align}
then $u(r,\xi) = \hat u(r)$ is symmetric.
\end{theorem}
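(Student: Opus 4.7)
The plan is to retrace, essentially verbatim, the argument used for the finite-annulus Theorem \ref{th-annuli}, replacing only the two preliminary lemmas whose proofs used compactness of $[r_1,r_2]$ in an essential way. Concretely, set $v = \Delta^N_\Gamma u$: it vanishes on the unique boundary component $\{r_0\}\times N$ and, using the commutation rule $[\Delta_\Psi,\Delta^N_\Gamma] = 0$ together with $\Delta_\Psi u = f(u)$ and $f''\leq 0$, satisfies
\[
\Delta_\Psi v \leq f'(u)\,v \quad \text{on } A(r_0,+\infty).
\]

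The first preliminary to re-prove is the analog of Lemma \ref{Lem:IntegralN}. Rewriting the equation as
\[
\Delta^N_\Gamma u = \s^2 f(u) - \s^2\partial_r^2 u - \bigl((m-1)\s\s' - \Phi'\s^2\bigr)\partial_r u,
\]
the hypotheses $\|u\|_{C^2_{\mathrm{rad}}}<+\infty$ and $\s,\s'\in L^\infty(\mathbb{R}_{\geq 0})$ (plus the implicit control of $\Phi'\s^2\partial_r u$) show that $v\in L^\infty(A(r_0,+\infty))$. Combined with the standing assumption $\Delta^N_\Gamma u(\bar r,\cdot)\in L^1(N,\dv_\Gamma)$, the bound $u(\bar r,\cdot)\in L^\infty(N)$, and the parabolicity of $N_\Gamma$ (which persists in the noncompact setting because $N$ is complete, boundaryless, and of finite $\Gamma$-volume, cf.\ Theorem \ref{th-parabolic}.c), an application of Theorem \ref{th-parabolic}.b yields $\int_N \Delta^N_\Gamma u(\bar r,\xi)\,\dv_\Gamma = 0$ for every $\bar r\in [r_0,+\infty)$.

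The second preliminary is the analog of Lemma \ref{Lem:ExistencePhi}. I would search for a radial supersolution $\vp(r)$ of
\[
\vp''+\Bigl((m-1)\frac{\s'}{\s}-\Phi'\Bigr)\vp' + B = 0, \quad \vp(r_0)=1, \quad \vp'(r_0)=b<0.
\]
Integrating twice produces an explicit formula in which the $B$-dependent integral term is $B\int_{r_0}^t e^{\Phi(s)}\s^{1-m}(s)\int_{r_0}^s e^{-\Phi(z)}\s^{m-1}(z)\,\dz\,\ds$, i.e.\ the primitive of $B\cdot\theta$. The integrability $\theta\in L^1(r_0,+\infty)$ together with the smallness condition on $B$ lets one choose $b<0$ for which $\vp$ remains trapped between two positive constants on all of $[r_0,+\infty)$, furnishing the bounded positive supersolution required by the noncompact maximum principle of Proposition \ref{prop:NonCompactPWMaxPrinc}.

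With these two ingredients in hand, the endgame is identical to the one for Theorem \ref{th-annuli}: apply Proposition \ref{prop:NonCompactPWMaxPrinc} to $-v$ (bounded because $v\in L^\infty$, and with $\bar{A}(r_0,+\infty)$ $\CN$-parabolic by hypothesis) to deduce $v\geq 0$; then integrate against the warped measure $e^{-\Phi}\s^{m-1}\,\dr\,\dv_\Gamma$ and invoke Fubini together with the vanishing of the slice integrals to conclude $\int_{A(r_0,+\infty)} v\,\dv_\Psi = 0$, forcing $v\equiv 0$; for every fixed $\bar r$, the function $u(\bar r,\cdot)$ is then a bounded $\Delta^N_\Gamma$-harmonic function on the parabolic space $N_\Gamma$, hence constant. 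The main obstacle is the construction and uniform two-sided control of $\vp$, which in the finite case was automatic from compactness of $[r_1,r_2]$ and now rests entirely on the $L^1$-integrability of $\theta$ at infinity combined with the explicit form of the smallness assumption on $B$; a secondary bookkeeping issue is verifying that the intermediate $L^\infty$ bound on $v$ really follows from the stated $\s,\s'\in L^\infty$ hypotheses, which is where one may need to absorb $\Phi'\s^2$ into the same class.
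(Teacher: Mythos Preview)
Your proposal is correct and follows essentially the same route as the paper: the paper's argument for Theorem \ref{th-infinite-annuli} consists precisely of retracing the proof of Theorem \ref{th-annuli} after noting that (i) the boundedness of $\s$ and $\s'$ makes Lemma \ref{Lem:IntegralN} go through, and (ii) the integrability of $\theta$ on $(r_0,+\infty)$ together with the smallness of $B$ allows the radial supersolution construction of Lemma \ref{Lem:ExistencePhi} to be carried out on the half-line. Your caveat about the $\Phi'\s^2\partial_r u$ term is a fair bookkeeping point that the paper also leaves implicit; note, however, that the $L^1$-integrability of $\Delta^N_\Gamma u$ on $N$ is taken as a standing hypothesis in the statement, so the only place where the full $L^\infty$ bound on $v$ is genuinely needed is to feed $\sup(-v)<+\infty$ into Proposition \ref{prop:NonCompactPWMaxPrinc}.
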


\begin{remark}
Note that, when specified to a model manifold, $A(r_0,+\infty)$ is the exterior domain $\mm(\s) \setminus B_{r}(o)$.
\end{remark}

Theorem \ref{th-infinite-annuli} paves the way for further interesting studies about infinite annuli, such as a deeper understanding of the link between the warping function $\s$ and the weight function $\Psi$. Indeed, it is only in the context of annuli with infinite radius that we can really understand how the behaviour of $\sigma$ at infinity plays a role when combined with that of $\Psi$.

Lastly, it could also be interesting to better understand the $\CN$-parabolicity and its compatibility with the conditions just required for infinite annuli.

%%%%%%%%%%%%%%%%%%%%%%%%%%%%%%%%%%%%%%%%%%%%%%%%%%%%%%%%%%%%%%%%%%%%

\appendix

\section{Proof of the commutation Lemma \ref{lem: Commutation law}}

As we are going to explain, the proof of Lemma \ref{lem: Commutation law} can be obtained by a small variation of the arguments in \cite{savo2018geometric}. First, we need to fix some notation. Given any(!) smooth tube $\O $ with soul $P$, let $U(P)$ be the unit normal bundle of $P$, which is locally isometric to $P\times \mathbb{S}^{n-k-1}$ for $k=\dim(P)$. If we consider the restriction of the exponential map to this bundle
\begin{displaymath}
\begin{split}
\Phi:[-R,R]\times U(P)&\to \O \\
\Big(r,(x, \xi)\Big)=(r,\xi)&\mapsto \textnormal{exp}_x(r \xi)
\end{split}
\end{displaymath}
then  we obtain a diffeomorphism
\begin{displaymath}
\Phi_1:(0,R]\times U(P) \to \O .
\end{displaymath}
In particular, if $(r,\xi)\in [-R,R]\times U(P)$, then there exists a  function $\theta_{\Psi}:[-R,R]\times U(P) \to \mathbb{R}$, positive on $(0,R] \times P$, such that
\begin{displaymath}
\Phi^*(\dv_{\Psi})(r,\xi)=\theta_{\Psi}(r,\xi)\ dr\ d\xi.
\end{displaymath}
If we denote, as usual, $d(x) = \dist(x,P)$, then the function $\theta_{\Psi}$ satisfies
\[
\Delta_{\Psi} d = - \frac{d}{dr} \log(\theta_{\Psi}) = H_{\Psi}.
\]
In particular, if $\O $ is $\Psi$-isoparametric then
\[
\theta_{\Psi}(r,\xi) = \theta_{\Psi}(r)
\]
only depends on the $r$-variable.\smallskip

We are now in the position to give the
\begin{proof}[Proof (of Lemma \ref{lem: Commutation law})]
(a) Let $u \in C^{\infty}(\O )$. Consider the map $F:[0,R]\times U(P)\to \mathbb{R}$ given by the composition $u\circ \Phi$. It extends smoothly to $[-R,R]\times U(P)$ since $\Phi(-r,-\xi)=\Phi(r,\xi)$. Then, if $r>0$ we have
\begin{displaymath}
\int_{\Sigma_r}{u\ \da_{\Psi}} = \int_{U(P)}{F(r,\xi)\ \theta_\Psi(r,\xi)\ \textnormal{d}\xi}.
\end{displaymath}
Since the domain is weighted isoparametric, $\theta_{\Psi}$ depends only on $r$ and hence
\begin{equation}\label{eq:RadialUNormalCoordinates}
\int_{\Sigma_r}{u\ \da_{\Psi}}=\theta_{\Psi}(r)\int_{U(P)}{F(r,\xi)\ \textnormal{d}\xi}.
\end{equation}
Note that, by applying \eqref{eq:RadialUNormalCoordinates} to the constant function $1$, we have
\begin{displaymath}
\area_{\Psi} \Sigma_r:= \int_{\Sigma_{r}} \da_{\Psi} =\theta_{\Psi}(r)\ \area U(P)
\end{displaymath}
where $\vol(U(p))$ denotes the Riemannian measure of $U(P)$. Whence, we can rewrite the averaged function
\[
\hat{u}(r) = \CA_{\Psi}(u)(r)
\]
in the form
\begin{displaymath}
\hat{u}(r)=\frac{1}{\area U(P)}\int_{U(P)}{F(r,\xi)\ \textnormal{d}\xi}.
\end{displaymath}
The proof of statement (a) now follows exactly as in \cite[ Proposition 13]{savo2018geometric}.\smallskip

\noindent (b) Let's start by considering  formula (13) in \cite{savo2018geometric}. It states that
\begin{displaymath}
\frac{d}{dr} \int_{\Lambda_r}{u\ \da}=\int_{\Lambda_r}{\big[ g(\nabla u, \nabla d)+u\ \Delta d \big]\ \da}
\end{displaymath}
where $\Lambda_r$ are the leaves of a smooth tube around $P$. In our context the previous formula becomes
\begin{equation}\label{eq:DerivativeIntegralU}
\begin{split}
\frac{d}{dr}\int_{d=r}{u\ \da_{\Psi}}&=\frac{d}{dr}\int_{d=r}{u\ e^{-\Psi}\ \da}\\
&=\int_{d=r} {\big[ g(\nabla(u\ e^{-\Psi}),\nabla d)+u\ e^{-\Psi}\Delta d \big]\ \da}\\
&=\int_{d=r}{g(\nabla u, \nabla d)\ e^{-\Psi}\ \da}+\int_{d=r}{\big[u\ \Delta d - u\ g(\nabla \Psi,\nabla d)\big]e^{-\Psi}\ \da}\\
&=\int_{d=r}{g(\nabla u, \nabla d)\ \da_{\Psi}}+\int_{d=r}{u\ \Delta_{\Psi} d\ \da_{\Psi}}
\end{split}
\end{equation}
Thanks to the weighted Green identity:
\begin{displaymath}
\begin{split}
\int_{d=r}{g(\nabla u, \nabla d) v\ \da_{\Psi}}
&=\int_{d<r}{g(\nabla u, \nabla v)\ \dv_{\Psi}}+\int_{d<r}{\Delta_{\Psi} u\ v\ \dv_{\Psi}},
\end{split}
\end{displaymath}
and using also that $\Delta_{\Psi} d=H_{\Psi}$, formula \eqref{eq:DerivativeIntegralU} becomes
\begin{displaymath}
\frac{d}{dr}\int_{d=r}{u\ \da_{\Psi}}=\int_{d<r}{\Delta_{\Psi} u\ \dv_{\Psi}}+H_{\Psi} \int_{d =r}{u\ \da_{\Psi}}.
\end{displaymath}
Let
\begin{displaymath}
\begin{split}
& \psi=\int_{d=r}{u\ \da_{\Psi}}\\
& V=\area_{\Psi}\Sigma_r :=\int_{d=r}{\da_{\Psi}}.
\end{split}
\end{displaymath}
Then
\begin{displaymath}
\begin{split}
&\psi'=\int_{d<r}{\Delta_{\Psi} u\ \dv_{\Psi}}+H_{\Psi} \psi\\
&V'=H_{\Psi} V
\end{split}
\end{displaymath}
and the proof of property (b) can  be obtained exactly as in \cite{savo2018geometric}. Indeed, by the fact that $\hat{u}=\frac{\psi}{V}$ we have
\begin{displaymath}
\hat{u}'=\frac{1}{V}\int_{d<r}{\Delta_{\Psi} u\ \dv_{\Psi}}
\end{displaymath}
and thus
\begin{displaymath}
\hat{u}''=-\frac{H_{\Psi}}{V}\int_{d<r}{\Delta_{\Psi}u\ \dv_{\Psi}}+\frac{1}{V}\int_{d=r}{\Delta_{\Psi} u\ \da_{\Psi}}.
\end{displaymath}
This latter, in turn, implies
\begin{displaymath}
\hat{u}''+H_{\Psi} \hat{u}'=\widehat{\Delta_{\Psi} u}.
\end{displaymath}
On the other hand
\begin{displaymath}
\begin{split}
\Delta_{\Psi} (\hat{u}\circ d)&=\Delta(\hat{u}\circ d)-g(\nabla(\hat{u}\circ d),\nabla \Psi)\\
&=\hat{u}''\circ d+(\hat{u}'\circ d) \Delta_{\Psi} d\\
&=\hat{u}''\circ d+(\hat{u}'\circ d) H_{\Psi}\\
&=(\hat{u}''+H_{\Psi} \hat{u}')\circ d
\end{split}
\end{displaymath}
obtaining that
\begin{displaymath}
\widehat{\Delta_{\Psi} u}\circ d=(\hat{u}''+H_{\Psi} \hat{u}')\circ d=\Delta_{\Psi} (\hat{u}\circ d).
\end{displaymath}
This means precisely that
\begin{displaymath}
[\CA_{\Psi},\Delta_{\Psi}]u=0,
\end{displaymath}
as desired.
\end{proof}

%%% manual references %%%

\end{document}